\begin{document}
%\input xy
%\xyoption{all}
\newtheorem{lemma}{Lemma}[section]
\newtheorem{lemm}[lemma]{Lemma}
\newtheorem{prop}[lemma]{Proposition}
\newtheorem{coro}[lemma]{Corollary}
\newtheorem{theo}[lemma]{Theorem}
\newtheorem{conj}[lemma]{Conjecture}
\newtheorem{prob}{Problem}
\newtheorem{ques}{Question}
\newtheorem{rema}[lemma]{Remark}
\newtheorem{rems}[lemma]{Remarks}
\newtheorem{defi}[lemma]{Definition}
\newtheorem{defis}[lemma]{Definitions}
\newtheorem{exam}[lemma]{Example}

\newcommand{\N}{\mathbf N}
\newcommand{\Z}{\mathbf Z}
\newcommand{\R}{\mathbf R}
\newcommand{\Q}{\mathbf Q}
\newcommand{\C}{\mathbf C}

\title{Ergodic boundary representations}

\author{A. Boyer}
\address{Weizmann Institute of Science, Rehovot, Israel}
\email{adrien.boyer@weizmann.ac.il}
\author{G. Link}
\address{KIT, Institut f\"ur Algebra und Geometrie}
\email{gabriele.link@kit.edu}
\author{Ch. Pittet}
\address{I2M, UMR 7373 CNRS, Aix-Marseille Universit\'e}
\email{pittet@math.cnrs.fr}

\keywords{Ergodicity, quasi-regular representations, unitary representations, lattices, semisimple Lie groups, Furstenberg-Poisson boundary, Harish-Chandra's function}

\subjclass[2000]{Primary: 22D40; Secondary: 43, 47}

\thanks{The first author is supported by the ERC Grant 306706.  The authors thank the MFO and the CIRM for
RIP contracts providing excellent research environments}

\date{September 15th, 2016}

\maketitle
%\tableofcontents
%
\begin{abstract} We prove a von Neumann type ergodic theorem for averages of unitary operators arising from the  Furstenberg-Poisson boundary representation (the quasi-regular representation) of any lattice in a non-compact connected semisimple Lie group with finite center. 
\end{abstract} 

%\tableofcontents

\section{Introduction}
Let $\Gamma$ be an infinite  finitely generated group and let $\pi$ be a unitary representation of $\Gamma$ on a Hilbert space ${\mathcal H}$. What can be learned
from the asymptotic behavior of weighted averages of the type
\[
\sum_{\gamma\in\Gamma}a_{\gamma}\pi(\gamma)
\]
with $a_{\gamma}\in\mathbb C$ carefully chosen? This question has been intensively studied for representations associated to measurable actions. The weakest form of the von Neumann ergodic theorem (convergence in the weak operator topology) is one of the founding results of this line of thought. 
Recall that a sequence $A_n$ in the Banach  algebra ${\mathcal B}({\mathcal H})$ of bounded operators on ${\mathcal H}$ converges to $A\in {\mathcal B}({\mathcal H})$ with respect to  the  weak operator topology (WOT) 
if and only if for any $v,w\in {\mathcal H}$
\[
	\lim_{n\to\infty}\langle A_nv,w\rangle=\langle Av,w\rangle.
\]
\subsection{Unitary representations and measurable transformations}
Suppose $\Gamma$ acts by measure preserving transformations on a probability space $(B,\nu)$. Let $\pi_{\nu}$ be the associated canonical unitary representation on the Hilbert space $L^2(B,\nu)$. That is:
$$\pi_{\nu}(\gamma)\varphi(b)=\varphi(\gamma^{-1}b)\quad \forall b\in B\  \forall\varphi\in L^2(B,\nu)\  \forall \gamma\in\Gamma.$$
Let $\mathbbm 1_B$ be the characteristic function of the whole space $B$ and let $P_{\mathbbm 1_B}\in {\mathcal B}(L^2(B,\nu))$ denote the orthogonal projection 
onto the complex line generated by $\mathbbm 1_B$.
\emph{The existence of a sequence $\mu_n\in L^1(\Gamma)$  such that 
\[
	\lim_{n\to\infty}\pi_{\nu}(\mu_n)=\lim_{n\to\infty}\sum_{\gamma}\mu_n(\gamma)\pi_{\nu}(\gamma)=P_{\mathbbm 1_B}
\]
in the WOT implies the ergodicity of the action}.

\emph{If $\Gamma$ is amenable, the converse implication also holds with $\mu_n$ the uniform probability measures defined by any F{\o}lner sequence $F_n$: if the action is ergodic, then in the WOT 
\[
\lim_{n\to\infty}\frac{1}{|F_n|}\sum_{\gamma\in F_n}\pi_{\nu}(\gamma)=P_{\mathbbm 1_B}.
\]} This is a straightforward consequence of the $L^2$-mean ergodic theorem for amenable groups. More generally, \emph{for any locally compact second countable amenable group $G$, a measure preserving action on a probability space $(B,\nu)$ is ergodic if and only if
for any  F{\o}lner sequence $F_n$
\[
\lim_{n\to\infty}\frac{1}{\mathrm{vol}(F_n)}\int_{F_n}\pi_{\nu}(g){\rm d}g=P_{\mathbbm 1_B}
\]
in the WOT, where ${\rm d}g$ denotes a Haar measure on $G$ and $\mathrm{vol}(F_n)$ denotes the corresponding Haar volume of $F_n$}, see \cite[Ch. IV]{Tem} and references therein, see also Corollary 4.6 in F. Pogorzelski's Diploma thesis ``Ergodic Theorems on Amenable Groups'' (Eberhard-Karls-Universität Tübingen, 2010). See also \cite[5.1 p. 894]{Nev} for a short proof of the $L^2$-mean ergodic theorem which is a variant of Riesz's proof of von Neumann's ergodic theorem.
In fact \emph{the ergodicity of the action implies a.e. point-wise convergence of Birkhoff sums provided the F{\o}lner sequence is carefully chosen} \cite{Lin}.
%\subsection{Von Neumann ergodic theorems for non-amenable groups}

Let us mention some results for measure preserving actions of non-amenable groups. \emph{The ergodicity of the action implies the  convergence in the WOT of $\pi_{\nu}(\mu_n)$ to $P_{\mathbbm 1_B}$}
in the following cases: the group \emph{$\Gamma$ is Gromov-hyperbolic and $\mu_n$ is the family of Cesaro means with respect to concentric balls  or spherical shells defined  by any word metric associated to a finite symmetric generating set of $\Gamma$} (see \cite{Gui} for the special case of the free groups), the group  \emph{$\Gamma$ is a lattice in a connected semisimple Lie group with finite center and $\mu_n$ is the uniform measure on the intersection of $\Gamma$ with a bi-$K$-invariant lift in $G$ of a ball of radius $n$ in the symmetric space $G/K$}. See \cite{BowNev},\cite{Nev},\cite{GorNev},\cite{Ana}  and references therein for many other and stronger convergence results ($L^p$-convergence, point-wise a.e. convergence, equidistribution, rates of convergence, etc.). 
%\subsection{Cocycles and Harish-Chandra's functions}

When the acting group $\Gamma$ is non-amenable there may be no invariant measures but only quasi-invariant ones. In this case the associated quasi-regular representation is made  unitary with the help of the Radon-Nikodym cocycle:
\[
\pi_{\nu}(\gamma)\varphi(b)=\varphi(\gamma^{-1}b)\sqrt{\frac{{\rm d}\gamma_*\nu}{{\rm d}\nu}(b)}\quad\forall b\in B\  \forall\varphi\in L^2(B,\nu).	
\]
Due to the possible rapid decay of the Radon-Nikodym cocycle as $\gamma$ goes to infinity, if $E_n\subset\Gamma$ and $|E_n|\to\infty$, \emph{the averages 
\[
	\frac{1}{|E_n|}\sum_{\gamma\in E_n}\pi_{\nu}(\gamma)
\]
may converge in the WOT to the zero operator}. (For example, \emph{this is the case if $\Gamma$ is any uniform lattice in $SL(2,\mathbb R)$,
respectively in a non-compact connected semisimple Lie group $G$ with finite center, and $\pi_{\nu}$ is the quasi-regular representation of $\Gamma$ on the circle at infinity of $SL(2,\mathbb R)/SO(2,\mathbb R)$, respectively on the Furstenberg-Poisson boundary of $G$,  and $E_n$ is the ball in $\Gamma$ of radius $n$ defined by a word metric associated to any finite symmetric set of $\Gamma$}.)  Therefore it makes sense to normalize each unitary operator $\pi_{\nu}(\gamma)$ by the mean (the Harish-Chandra function
associated to $\pi_{\nu}$ evaluated on $\gamma$)
\[
	\Xi(\gamma)=\langle\pi_{\nu}(\gamma)\mathbbm 1_B,\mathbbm 1_B\rangle=\int_B\sqrt{\frac{{\rm d}\gamma_*\nu}{{\rm d}\nu}(b)}{\rm d}\nu(b)
\]
and rather considering the averages 
\[
	\frac{1}{|E_n|}\sum_{\gamma\in E_n}\frac{\pi_{\nu}(\gamma)}{\langle\pi_{\nu}(\gamma)\mathbbm 1_B,\mathbbm 1_B\rangle}.
\]
\subsection{Bader-Muchnik's theorem on negatively curved manifolds and generalizations}
Let $M$ be a closed Riemannian manifold with strictly negative sectional curvature and let $(X,d_X)$ be the universal cover of $M$ endowed with the unique Riemannian metric locally isometric to $M$. Fix a base point $x_0\in X$. For any $x\in X$ with  $x\neq x_0$ there is a unique geodesic ray $c_x:[0,\infty[\to X$ such that $c_x(0)=x_0$ and $c_x\bigl(d_X(x_0,x)\bigr)=x$. Let $B=\partial X$ be the visual boundary of $X$ endowed with the Patterson-Sullivan
measure $\nu$ corresponding to $x_0\in X$. Up to normalization, $\nu$ is the Hausdorff measure defined by the Bourdon metric $d_{x_0}$ on $B$ (see \cite{Bou}) and $\Gamma$ acts by conformal transformations on $B$ with Radon-Nikodym cocycle
\[
	\frac{{\rm d}\gamma_*\nu(b)}{{\rm d}\nu(b)}=e^{-\delta B_b(\gamma x_0,x_0)};
\]
here $\delta$ is the Hausdorff dimension of $(B,d_{x_0})$ and $B_b(\gamma x_0,x_0)$ is the Busemann function on $X$ defined by  the point $b\in B$ (see \cite{Rob}).
As $\Gamma=\pi_1(M)$ acts (from the left) freely on $X$, each $\gamma\in\Gamma\setminus\{e\}$ defines a unique geodesic ray $c_{\gamma x_0}$ emanating from $x_0$ and passing through $\gamma x_0$. Let $c_{\gamma x_0}(\infty)\in B$ be the point it defines in the visual boundary $B$ and  consider the  boundary map 
$$\bold{b}:\Gamma\setminus\{e\}\to B$$
defined by $\bold{b}(\gamma)=c_{\gamma x_0}(\infty).$ 
 The map  $\bold{b}$ allows to associate to any function 
$f$ defined on the boundary $B$ and any finite subset $E$ of $\Gamma\setminus\{e\}$ the weighted average
\[
	\frac{1}{|E|}\sum_{\gamma\in E}f(\bold{b}(\gamma))\frac{\pi_{\nu}(\gamma)}{\langle\pi_{\nu}(\gamma)\mathbbm 1_B,\mathbbm 1_B\rangle}.
\]	
If $f\in L^{\infty}(B,\nu)$ we denote  $m(f)\in {\mathcal B}(L^2(B,\nu))$ the multiplication operator defined by $f$, that is:  $m(f)\phi=f\phi$ $\ \forall \phi\in L^2(B,\nu)$. In a remarkable paper \cite{BadMuc}, U. Bader and R. Muchnik prove  that \emph{if $f$ is continuous then in the WOT
\[
\lim_{n\to\infty}\frac{1}{|B_n|}\sum_{\gamma\in B_n}f(\bold{b}(\gamma))\frac{\pi_{\nu}(\gamma)}{\langle\pi_{\nu}(\gamma)\mathbbm 1_B,\mathbbm 1_B\rangle}=m(f)P_{\mathbbm 1_B},	
\]
where $B_n$ is the ball of radius $n$ in $\Gamma$ defined by the length function $L(\gamma)=d_X(x_0,\gamma x_0)$ with center $e\in\Gamma$ removed}  (the statement in \cite{BadMuc} involves annuli rather than balls but this is equivalent; see Proposition \ref{prop: ergodicity relative to balls and annuli} below).

The starting point of the present work was the guess -- hinted by the authors in their paper: ``We will resist the temptation of stating things in a greater generality than needed"  -- that Bader-Muchnik's theorem and its consequences hold in a much more general setting than the one of closed Riemannian manifolds with strictly negative curvature.  
One of the authors of this paper (A. Boyer)
generalized Bader-\-Mu\-chnik's theorem to  discrete isometry groups of proper $\mathrm{CAT}(-1)$ spaces, having non-arithmetic length spectrum, finite Bowen-Margulis-Sul\-livan measure and $\delta$-Ahlfors regular Patterson-Sul\-li\-van conformal density of dimension $\delta$ (see \cite{BoyCAT}). This covers convex-cocompact groups with non-arithmetic spectrum as well as finite volume locally symmetric spaces of rank one, but does not cover complete finite volume Riemannian manifolds with pinched negative curvature (see the interesting examples from \cite{DalPeiPicSam}).
Recall that the length spectrum is non-arithmetic if by definition the subgroup of the real line
generated by the lengths of the closed geodesics of the quotient space is not cyclic, and that a compact locally $\mathrm{CAT}(-1)$ space has 
an arithmetic length spectrum if and only if it is a finite
graph with edge lengths rationally dependent \cite[Thm. 4]{Ricks}. In the case the $\mathrm{CAT}(-1)$ space is the Cayley graph (with all edges of length $1$) of the free group on $n$ letters, all closed geodesics in the wedge of $n$ circles of length $1$ have integral length, hence the spectrum is obviously arithmetic and the general theory does not apply. Nevertheless a direct counting argument \cite{BoyPin} allows to prove  Bader-Muchnik's theorem in this case as well. Following the same lines of ideas as Bader and Muchnik, L. Garncarek was able to prove that the boundary representation associated to a Patterson-Sullivan measure of a Gromov hyperbolic group is irreducible \cite{Gar}. In the same vein, the irreducibility of some boundary representations associated to Gibbs measures has been established in \cite{BoyMay}.

%\subsection{Bader-Muchnik's theorem in higher rank?}
In the unpublished note ``A brief presentation of property RD"  2006, E. Breuillard speculates that a special case of Bader-Muchnik's theorem should also hold true for lattices in semisimple Lie groups of higher rank and that the proof should easily 
follow from the work of Gorodnik and Oh \cite{GorOh}. More precisely, Breuillard asks if the quasi-regular representation $\lambda_{G/P}$ of a connected  non-compact semisimple Lie group $G$ with  finite center  on its Furstenberg-Poisson boundary $(G/P,\nu)$ satisfies for any lattice $\Gamma\subset G$
\[
\lim_{T\to\infty}\frac{1}{|S_r(T)|}\sum_{\gamma\in S_r(T)}\frac{\langle\lambda_{G/P}(\gamma)\phi,\psi\rangle}{\langle\lambda_{G/P}(\gamma)\mathbbm 1_{G/P},\mathbbm 1_{G/P}\rangle}=\int_B\phi(b) {\rm d}\nu(b)\int_B\overline{\psi(b)}{\rm d}\nu(b)
\]
for all (positive) $\phi,\psi\in L^2(G/P,\nu)$, where $$S_r(T)=\{\gamma\in \Gamma : T-r\leq L(\gamma)\leq T+r \}$$ is the intersection with $\Gamma$ of the annulus in $G$ around  $e\in G$ of radius $T$ and thickness $r$ with respect to a length function $L$ on $G$.

\subsection{Statement of the main result}\label{subsection: statement of the main results}

Let $G$ be a connected semisimple Lie group with finite center and let $\frak g$ be its Lie algebra.
Let $K$ be a maximal compact subgroup of $G$ and let $\frak k$ be its Lie algebra. Let $\frak p$ be the orthogonal complement of $\frak k$ in $\frak g$
relative to the Killing form $B$. Among the  abelian sub-algebras of $\frak g$ contained in the subspace $\frak p$, let $\frak a$ be a maximal one. We assume $\dim \frak a>0$, i.e. the real rank of $G$ is strictly positive (or equivalently that $G$ is not compact). Let $\Sigma$ be the root system associated to $(\frak g,\frak a)$. Let
\[
	\frak g_{\alpha}=\{X\in\frak g: \mathrm{ad}(H)X=\alpha(H)X\ \ \forall H\in\frak a  \}
\]
be the root space of $\alpha\in\Sigma$.
Let 
\[
	\frak a^{sing}=\bigcup_{\alpha\in\Sigma}\mathrm{ker}(\alpha)
\]
be the union of the hyperplanes of $\frak a$ defined as the kernels of the roots of $\Sigma$. 
Let us choose a positive open  Weyl chamber $\frak a^+$ (i.e. a connected component of $\frak a\setminus \frak a^{sing}$).
Let $\Sigma^+\subset\Sigma$ be the set of positive roots ($\alpha\in\Sigma$ is positive if and only if $\alpha(H)>0$ for all $H\in \frak a^+$), and 
 $\frak n$ the nilpotent Lie algebra defined as the direct sum of root spaces of positive roots:
\[
	\frak n=\bigoplus_{\alpha\in\Sigma^+}\frak g_{\alpha}.
\]
Let $A=\exp(\frak a)$, $A^+=\exp(\frak a^+)$ and $N=\exp(\frak n)$.
Let $G=KAN$ be the Iwasawa decomposition defined by $\frak a^+$. Let $Z(A)$ be the centralizer of $A$ in $G$ and  
 $M=Z(A)\cap K$. The  group $M$ normalizes $N$. Let $P=MAN$ be the minimal parabolic subgroup of $G$ associated to $\frak a^+$.
Let $\nu$ be the unique Borel regular $K$-invariant probability measure on the Furstenberg-Poisson boundary $G/P$. 
Let 
\[
\lambda_{G/P}:G\to \mathcal{U}(L^2(G/P,\nu))	
\]
be the associated quasi-regular representation and let 
\[
	\Xi(g)=\langle\lambda_{G/P}(g)\mathbbm 1_{G/P},\mathbbm 1_{G/P}\rangle
\]
be the Harish-Chandra function (see Subsections \ref{subsection: The quasi-regular representation and the Harish-Chandra function associated to a quasiinvariant measure} and \ref{subsection: Sjogren} below for definitions and references).
The subset $KA^+K\subset G$ is open and dense, and the map
\[
	\bold{b}:KA^+K\to G/P
\]
defined as 
\[
	\bold{b}(k_1ak_2)= k_1P
\]
is continuous (to check that $\bold{b}$ is well defined see \cite[Theorem 5.20 and its proof]{Kna} or \cite[Theorem 1.1 and Corollary 1.2 of Ch. IX and their proofs]{HelDifGeo}).
Let $d_G$ be the left-invariant Riemannian metric on $G$ whose scalar product on the tangent space of $G$ at the identity $e$
\[
	\langle Z,Y\rangle=-B(Z,\Theta(Y))\quad \forall Z,Y\in\frak g
\]
is defined by the Killing form $B$ and the Cartan involution $\Theta$ associated to the decomposition
\[
	\frak g=\frak k\oplus\frak p.
\]
It is convenient to consider also normalizations of the scalar product $\langle \cdot,\cdot \rangle$ and $d_G$. Once a normalization is chosen,
let $d_X$ be the unique Riemannian metric on the symmetric space $X=G/K$
such that the canonical projection $G\to G/K$ becomes a $G$-equivariant Riemannian submersion.
Let $x_0\in X$ be the image of the identity element of $G$ under the canonical projection.
Notice that if $H\in\frak a$, then
\[
	d_X(\exp(H)x_0,x_0)=\|H\|.
\]
Let $T>0$. We define
\[
	\frak a_T=\{H\in\frak a: \|H\|< T\},\quad \frak a^+_T=\frak a^+\cap\frak a_T.
\]
We denote the images in $G$ under the exponential map as
\[
	A_T=\exp(\frak a_T),\quad A^+_T=\exp(\frak a^+_T). 
\]
Eventually, we define 
\[
	B_T=KA^+_TK.
\]
If $\psi\in L^{\infty}(G/P,\nu)$, we denote $m(\psi)\in {\mathcal B}\bigl(L^{2}(G/P,\nu)\bigr)$ the corresponding multiplication operator.
Let $P_{\mathbbm 1_{G/P}}\in {\mathcal B}\bigl(L^{2}(G/P,\nu)\bigr)$ be the orthogonal projection onto the complex line spanned by $\mathbbm 1_{G/P}$.
Let $\Gamma$ be a discrete subgroup of $G$,
\[
	\Gamma_T=B_T\cap\Gamma
\]
and let $|\Gamma_T|$ denote the cardinality of this finite set. 
Let $f:G/P\to\mathbb C$ be a continuous function. In the case  $\Gamma_T$ is non-empty we may consider 
the bounded operator
\[
	M_{\Gamma_T}^f=\frac{1}{|\Gamma_T|}\sum_{\gamma\in \Gamma_T}f(\bold {b}(\gamma))\frac{\lambda_{G/P}(\gamma)}{\Xi(\gamma)}.
\]  

\begin{theo}\label{theo: ergodicity of the quasi-regular representation of a lattice in a semisimple Lie group}(Ergodicity of the quasi-regular representation of a lattice in a semisimple Lie group.) Let $G$ be a non-compact connected semisimple Lie group with finite center. Let $P$ be a minimal parabolic subgroup of $G$ and $\Gamma$ a lattice in $G$. Let $f$ be a continuous function on $G/P$. With the notation as above we have
    \[
		\lim_{T\to\infty}M_{\Gamma_T}^f=m(f)P_{\mathbbm 1_{G/P}}
	\]
in the weak operator topology of ${\mathcal B}\bigl(L^{2}(G/P,\nu)\bigr)$.	
That is, for any $\varphi,\psi\in L^{2}(G/P,\nu)$
	\[
		\lim_{T\to\infty}
		\frac{1}{|\Gamma_T|}\sum_{\gamma\in\Gamma_T}
		f(\bold {b}(\gamma))\frac{\langle\lambda_{G/P}(\gamma)\varphi,\psi\rangle}{\Xi(\gamma)}=
		\langle\varphi,\mathbbm 1_{G/P}\rangle\langle f,\psi\rangle.
	\]
\end{theo}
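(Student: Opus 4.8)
The plan is to adapt the method of Bader and Muchnik \cite{BadMuc} to the present setting, replacing their geometric CAT$(-1)$ input by the equidistribution theory of lattice points in semisimple Lie groups \cite{GorOh},\cite{GorNev} (as Breuillard anticipated), and feeding in the fine estimates on the Harish-Chandra function $\Xi$ and on the Poisson kernel of Subsection \ref{subsection: Sjogren}. \textbf{Step 1 (reduction to a dense subspace via an a priori bound).} Because $|\langle M_{\Gamma_T}^f\varphi,\psi\rangle|$ involves the weights $\Xi(\gamma)^{-1}$, which blow up, the family $(M_{\Gamma_T}^f)_T$ is not obviously uniformly bounded in $\mathcal B(L^2(G/P,\nu))$. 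The first task is therefore to establish, for a suitable dense subspace $\mathcal D\subset L^2(G/P,\nu)$ with a stronger norm $\|\cdot\|_{\mathcal D}$ (for instance the H\"older functions on $G/P=K/M$, whose norm is $K$-invariant), a pointwise estimate of property-RD type
\[
\frac{|\langle\lambda_{G/P}(g)\varphi,\psi\rangle|}{\Xi(g)}\le C\,\|\varphi\|_{\mathcal D}\,\|\psi\|_{\mathcal D}\qquad(g\in G,\ \varphi,\psi\in\mathcal D).
\]
Via the Cartan decomposition $g=k_1\exp(H)k_2$ this reduces to bounds on $\Xi(\exp H)$ and on $\xi\mapsto\frac{d((\exp H)_*\nu)}{d\nu}(\xi)$, uniform also near the walls of $\frak a^+$, which is exactly the content of the Sj\"ogren-type estimates. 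Granting this, the $M_{\Gamma_T}^f$ are uniformly bounded on $\mathcal D$, and it suffices to prove the claimed convergence for $\varphi,\psi\in\mathcal D$.

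\textbf{Step 2 (from the discrete sum to a Haar integral).} Fix $\varphi,\psi\in\mathcal D$ and the continuous $f$, and set $F(g)=f(\mathbf b(g))\,\langle\lambda_{G/P}(g)\varphi,\psi\rangle/\Xi(g)$, which is continuous on the dense open set $KA^+K$ and bounded on $G$ by Step 1. Since the balls $B_T=KA^+_TK$ form a bi-$K$-invariant well-rounded exhaustion of $G$, the equidistribution of lattice points (a consequence of the mixing of the $G$-action on $\Gamma\backslash G$, see \cite{GorOh},\cite{GorNev}) gives
\[
\lim_{T\to\infty}\Bigl(\langle M_{\Gamma_T}^f\varphi,\psi\rangle-\frac{1}{\mathrm{vol}(B_T)}\int_{B_T}F(g)\,{\rm d}g\Bigr)=0.
\]
Applying the Cartan integration formula ${\rm d}g=\mathrm{const}\cdot\delta(H)\,{\rm d}k_1\,{\rm d}H\,{\rm d}k_2$ on $KA^+K$, with $\delta(H)=\prod_{\alpha\in\Sigma^+}(\sinh\alpha(H))^{\dim\frak g_\alpha}$ and $\mathbf b(k_1\exp(H)k_2)=k_1P$, the continuous average becomes
\[
\frac{1}{\mathrm{vol}(B_T)}\int_{B_T}F(g)\,{\rm d}g=\frac{\int_{\frak a^+_T}I(H)\,\delta(H)\,{\rm d}H}{\int_{\frak a^+_T}\delta(H)\,{\rm d}H},\qquad I(H)=\int_K\!\!\int_K f(k_1P)\,\frac{\langle\lambda_{G/P}(k_1\exp(H)k_2)\varphi,\psi\rangle}{\Xi(\exp H)}\,{\rm d}k_2\,{\rm d}k_1.
\]

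\textbf{Step 3 (the key asymptotic and a Ces\`aro passage).} The heart of the argument is the pointwise limit: for $\phi,\eta\in\mathcal D$,
\[
\lim_{H\to\infty}\frac{\langle\lambda_{G/P}(\exp H)\phi,\eta\rangle}{\Xi(\exp H)}=\phi(w_0P)\,\overline{\eta(eP)},
\]
as $H\to\infty$ inside $\frak a^+$ away from the walls; here $eP$ is the attracting fixed point of $\exp(\frak a^+)$ in $G/P$ and $w_0P$, with $w_0\in K$ a representative of the longest element of the Weyl group, is the attracting fixed point of $\exp(-\frak a^+)$. This is obtained from $K$-invariance of $\nu$, the change of variables $\xi\mapsto\exp(H)\xi$ in $\langle\lambda_{G/P}(\exp H)\phi,\eta\rangle=\int\phi((\exp H)^{-1}\xi)\sqrt{\frac{d((\exp H)_*\nu)}{d\nu}(\xi)}\,\overline{\eta(\xi)}\,{\rm d}\nu(\xi)$, the concentration $(\exp(\pm H))_*\nu\to\delta_{\text{attracting point}}$, the continuity of $\phi$ and $\eta$, and $\Xi(\exp(-H))=\Xi(\exp H)$. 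Applying it with $\phi=\lambda_{G/P}(k_2)\varphi$ (so $\phi(w_0P)=\varphi(k_2^{-1}w_0P)$) and $\eta=\lambda_{G/P}(k_1^{-1})\psi$ (so $\eta(eP)=\psi(k_1P)$), the a priori bound of Step 1 and dominated convergence give
\[
\lim_{H\to\infty}I(H)=\Bigl(\int_K f(k_1P)\,\overline{\psi(k_1P)}\,{\rm d}k_1\Bigr)\Bigl(\int_K\varphi(k_2^{-1}w_0P)\,{\rm d}k_2\Bigr)=\langle f,\psi\rangle\,\langle\varphi,\mathbbm 1_{G/P}\rangle,
\]
the last equality because $k\mapsto kP$ pushes Haar measure on $K$ to $\nu$ and $K$ acts transitively on $G/P$. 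Since $I$ is bounded (Step 1) and $\int_{\frak a^+_T}\delta(H)\,{\rm d}H$ concentrates, as $T\to\infty$, near the ray through $\rho=\frac12\sum_{\alpha\in\Sigma^+}(\dim\frak g_\alpha)\alpha$ in the interior of $\frak a^+$ (a Laplace-type phenomenon, $\delta(H)\asymp e^{2\rho(H)}$ deep in the chamber), the near-wall region is negligible and the displayed pointwise limit applies on the region carrying the mass; a Ces\`aro argument then shows that the ratio in Step 2 tends to $\langle\varphi,\mathbbm 1_{G/P}\rangle\langle f,\psi\rangle$. Combined with Step 2, this gives $\langle M_{\Gamma_T}^f\varphi,\psi\rangle\to\langle\varphi,\mathbbm 1_{G/P}\rangle\langle f,\psi\rangle$ for all $\varphi,\psi\in L^2(G/P,\nu)$, which is the assertion.

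\textbf{Main obstacle.} I expect the crux to be the combination of the a priori bound of Step 1 with the key asymptotic of Step 3: in higher rank $G/P$ is not a sphere and $\exp(\frak a^+)$ contracts it at different exponential rates along the various root directions, so controlling the Poisson kernel and the decay of $\Xi$ near the walls of the Weyl chamber --- precisely Sj\"ogren's estimates --- is the delicate ingredient, whereas the equidistribution input of Step 2 is by now standard.
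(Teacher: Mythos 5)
Your overall strategy (identify the limit on nice test vectors, control the averages uniformly, and feed in Gorodnik--Oh counting plus Sj\"ogren-type estimates) is in the right spirit, but two of your three steps have genuine gaps. First, the reduction in Step 1 is logically invalid. The pointwise bound $|\langle\lambda_{G/P}(g)\varphi,\psi\rangle|/\Xi(g)\le C\|\varphi\|_{\mathcal D}\|\psi\|_{\mathcal D}$ is true (indeed trivially, with $\|\cdot\|_{\infty}$ in place of $\|\cdot\|_{\mathcal D}$), but uniform boundedness of the bilinear forms with respect to a norm stronger than the $L^2$ norm on a dense subspace does \emph{not} allow you to pass from convergence on $\mathcal D\times\mathcal D$ to weak operator convergence on $L^2$: the density argument (Lemma \ref{lemma: straightforward}) needs $\sup_T\|M_{\Gamma_T}^f\|_{L^2\to L^2}<\infty$. (For instance, $A_n=nP_n$, with $P_n$ the projection onto the $n$-th Fourier mode on the circle, satisfies a bound of your type for smooth vectors and converges on them, yet is unbounded in operator norm and not WOT convergent.) This uniform $L^2$ bound is precisely the hard point; the paper obtains it by Riesz--Thorin interpolation (using $B_T=B_T^{-1}$ and unimodularity, so the averages are self-adjoint), which reduces it to $\sup_T\|M_{\Gamma_T}^{\mathbbm 1}\mathbbm 1\|_{\infty}<\infty$, proved in turn from the exact identity $M_{G_T}^{\mathbbm 1}\mathbbm 1\equiv 1$ for bi-$K$-invariant sets, a discretization lemma based on the stability of the cocycle and of $\Xi$, and the lattice count $|\Gamma_T|\asymp\mathrm{vol}(G_T)$. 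Nothing in your proposal supplies this bound.

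Second, the key asymptotic of Step 3 is unproven as stated. The measure implicit in $\langle\lambda_{G/P}(\exp H)\phi,\eta\rangle/\Xi(\exp H)$ is the \emph{normalized square root} of the Poisson kernel, $c(\exp(-H),\xi)^{1/2}{\rm d}\nu(\xi)/\Xi(\exp H)$; your justification appeals to the concentration of $(\exp(\pm H))_*\nu$, i.e.\ of the full Poisson kernel, which concentrates exponentially fast, whereas the square-root measure concentrates only slowly (already for $SL(2,\mathbb R)$ its mass is spread essentially evenly over the $\sim\|H\|$ dyadic scales around $eP$ --- this is exactly why Sj\"ogren's theorem is delicate). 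Sj\"ogren's result, which the paper uses, gives only the one-test-function statement $\mathcal P_0\varphi(\exp H)\to\varphi(eM)$ in cones away from the walls; your two-test-function limit $\phi(w_0P)\overline{\eta(eP)}$ is strictly stronger and would require new multi-scale estimates in higher rank. Relatedly, the sum-to-integral claim of Step 2 for the full matrix-coefficient function $F$ is not a consequence of the standard equidistribution theorems, which handle well-rounded sets and functions of the $K$-components (bisector counting), not an arbitrary bounded $F$. The paper sidesteps all of this: it never proves a pointwise two-sided limit, but tests only against characteristic functions of sets with negligible boundary, combines the Dirac-peak estimate (Sj\"ogren, valid in sub-cones of the Weyl chamber) with Gorodnik--Oh bisector counting applied to the pair $\bold{b}(\gamma)$, $\bold{b}(\gamma^{-1})$, proves ergodicity first relative to the truncated cones $\Gamma_T^{\theta}$, and finally removes the cone restriction via the strong wavefront lemma and $|\Gamma_T^{\theta}|\sim|\Gamma_T|$.
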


\subsection{Main results and ideas}
Among quasi-regular representations defined by  measurable actions of a locally compact second-countable group $G$ on a second-countable Borel space $B$ preserving the class of a  Borel probability measure $\nu$, we single out in Definition \ref{def: ergodic quasi-regular representation}  those representations which are \emph{ergodic with respect to some family $(E_n,e_n)$ where each $E_n\subset G$  is relatively compact Borel  and   $e_n:E_n\to B$ is Borel, and with respect to some function $f\in L^{\infty}(B,\nu)$.}

The definitions are coined so that a quasi-regular  representation  which is ergodic with respect to a family $(E_n,e_n)$ and sufficiently many  functions $f\in L^{\infty}(B,\nu)$  has to be irreducible; see Proposition \ref{prop: ergodic representations are irreducible}. The existence  of such a family implies the equidistribution of the sets $e_n(E_n)$ in $(B,\nu)$; see Remark \ref{rema: ergodic implies equidistribution}.   
The ergodicity of the quasi-regular representation associated to an action is strictly stronger than the ergodicity of the action; see the example after Remark \ref{rema: ergodic implies equidistribution}.

We show that the ergodicity of a quasi-regular representation  does not depend too much on the chosen family $E_n\subset G$:  one is free to work with balls, annuli, cones, sub-cones; see Subsection \ref{subsection: ergodicity relative to balls, annuli, cones and subcones}.

Theorem \ref{theorem: ergodicity of some quasi-regular representations}
gives sufficient conditions for the quasi-regular representation of a unimodular locally compact second-countable group $G$, endowed with a length function and acting on a Borel metric probability space $(B,d,\nu)$, to be ergodic
with respect to a symmetric family  $E_n\subset G$, Borel maps $e_n:E_n\to B$ and functions $f\in L^{\infty}(B,\nu)$ belonging to the closure of the subspace generated by characteristic functions of Borel subsets  $U\subset B$ such that $\nu(\partial U)=0$. 

As explained in the previous section, if $G$ is a connected non-compact semisimple Lie group $G$ with finite center, with maximal compact subgroup $K$, Cartan decomposition $K\overline{A^+}K$, minimal parabolic $P$ and $M=Z(A)\cap K$, we consider its Furstenberg-Poisson boundary $B=G/P=K/M$ and the boundary map
$$\bold b:KA^+K\to K/M$$ defined a.e. on $G$ as $\bold b(k_1ak_2)=k_1M$. If the real rank of $G$ equals one, the Furstenberg-Poisson boundary is identified with the visual boundary
of the symmetric space $G/K$, the boundary map $\bold b$ is defined on $G\setminus K$ and coincides  with the map $\bold b(\gamma)=c_{\gamma x_0}(\infty)$
explained above. If $\Gamma\subset G$ is a lattice, we apply Theorem \ref{theorem: ergodicity of some quasi-regular representations} to the intersections of $\Gamma$ with some  cones made bi-$K$-invariant in $G$ and deduce (using Theorem \ref{theo: ergodicity relative to cones of the quasi-regular representation of a lattice in a semisimple Lie group}) 
the ergodicity of the quasi-regular representation of a lattice in a semisimple Lie group (Theorem \ref{theo: ergodicity of the quasi-regular representation of a lattice in a semisimple Lie group}); the 
boundary maps $\bold b_T$ are the restrictions of   $\bold b(k_1ak_2)=k_1M$ to the $T$-truncation of (the image by the exponential map of) a Weyl chamber made bi-$K$-invariant.
The irreducibility of the restriction of the quasi-regular representation $\lambda_{G/P}$ to any lattice  follows from a general result in~\cite{CowSte}; according to Proposition~\ref{prop: ergodic representations are irreducible} this is also a straightforward corollary of Theorem~\ref{theo: ergodicity of the quasi-regular representation of a lattice in a semisimple Lie group}. 

Applying Theorem~\ref{theo: ergodicity of the quasi-regular representation of a lattice in a semisimple Lie group} or Bader-Muchnik's theorem to the case $$\mathrm{rank}_{\mathbb{R}} G=1$$ brings essentially the same information, although the original statement of Bader and Muchnik applies only  to uniform lattices.

Specializing  Theorem \ref{theo: ergodicity of the quasi-regular representation of a lattice in a semisimple Lie group} by choosing twice the vector $$\mathbbm 1_{G/P}\in L^2(G/P)$$ and applying the definition of the WOT one recovers the equidistribution of the radial $K$-component of the Cartan decomposition $KA^+K$ of lattice points in the Furstenberg-Poisson boundary: 
\emph{for every continuous function $f$ on $G/P$
\[
\lim_{n\to\infty}\frac{1}{|B_n|}\sum_{\gamma\in B_n}f(\bold{b}(\gamma))=\int_{G/P}f(b){\rm d}\nu(b).
\]} 
The analogous statement with the Iwasawa decomposition $G=KAN$ instead of the Cartan decomposition  and the  boundary map $\bold b(kan)=kM$ instead of  $\bold b(k_1ak_2)=k_1M$ follows from \cite[Theorem 1]{GorMau}. We refer the reader to \cite[Theorem 7.2]{GorNev} for  the equidistribution  of both $K$-components of the Cartan decomposition $KA^+K$  of lattice points with an explicit control on the rate of convergence  for Lipschitz functions. 

Specializing Theorem \ref{theo: ergodicity of the quasi-regular representation of a lattice in a semisimple Lie group} by choosing $f=\mathbbm 1_{G/P}\in L^{\infty}(G/P)$ and applying the results of Subsection \ref{subsection: ergodicity relative to balls, annuli, cones and subcones} confirms Breuillard's guess mentioned above.

\subsection{Key points from the proofs, questions and speculations} The proof of Theorem \ref{theorem: ergodicity of some quasi-regular representations}
splits into two parts. The first part consists in ``the identification of the limit'' -- by which we  mean the convergence
\[
	\lim_{n\to\infty}\langle M_{(E_n,e_n)}^{\mathbbm 1_U}\mathbbm 1_V,\mathbbm 1_W\rangle=\nu(U\cap W)\nu(V)
\]
(see Formula \ref{formula: identification of the limit}  below and Section \ref{section: ergodic properties of quasi-regular representations} below for notation)  -- which is established for sufficiently many characteristic functions.  The second part amounts to prove the uniform bound 
\[
	\sup_n \|M_{E_n}^{\mathbbm 1_B}\|_{op}<\infty
\]
for the family of operators $M_{E_n}^{\mathbbm 1_B}\in {\mathcal B}\bigl(L^2(B,\nu)\bigr)$. Assuming both parts proved, the main conclusion of Theorem~\ref{theorem: ergodicity of some quasi-regular representations}, that is the ergodicity of the representation $\pi_{\nu}$ is easily deduced from density arguments and the following lemma whose proof is straightforward:
\begin{lemma}\label{lemma: straightforward}(From $L^{\infty}$ to $L^2$.) Let ${\mathcal H}$ be a Hilbert space and let $V$ be a dense subspace. Let $A_n$ be a uniformly bounded sequence of ${\mathcal B}({\mathcal H})$.
If for all $v,w\in V$ 
	\[
		\lim_{n\to\infty}\langle A_nv,w\rangle=0,
	\]
then 
	\[
		\lim_{n\to\infty}A_n=0
	\]
in the weak operator topology.	
\end{lemma}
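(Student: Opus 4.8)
The plan is a routine approximation (``three-epsilon'') argument, in which the uniform operator bound is precisely what lets the approximation pass to the limit. Put $C:=\sup_n\|A_n\|_{op}<\infty$, which is finite by hypothesis. Fix arbitrary $v,w\in\mathcal H$ and $\varepsilon>0$; the goal is to estimate $\limsup_{n\to\infty}|\langle A_nv,w\rangle|$ by a quantity that tends to $0$ with $\varepsilon$.

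First I would invoke density of $V$ to choose $v',w'\in V$ with $\|v-v'\|<\varepsilon$ and $\|w-w'\|<\varepsilon$, and decompose
\[
\langle A_nv,w\rangle=\langle A_nv',w'\rangle+\langle A_n(v-v'),w\rangle+\langle A_nv',w-w'\rangle.
\]
By Cauchy--Schwarz together with the uniform bound $\|A_n\|_{op}\le C$, the last two terms satisfy $|\langle A_n(v-v'),w\rangle|\le C\varepsilon\|w\|$ and $|\langle A_nv',w-w'\rangle|\le C\|v'\|\varepsilon\le C(\|v\|+\varepsilon)\varepsilon$, uniformly in $n$. Since $v',w'\in V$, the hypothesis gives $\langle A_nv',w'\rangle\to 0$ as $n\to\infty$. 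Combining these,
\[
\limsup_{n\to\infty}|\langle A_nv,w\rangle|\le C\varepsilon\bigl(\|v\|+\|w\|+\varepsilon\bigr).
\]
Letting $\varepsilon\to 0$ yields $\lim_{n\to\infty}\langle A_nv,w\rangle=0$ for all $v,w\in\mathcal H$, which by definition means $A_n\to 0$ in the weak operator topology.

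There is no genuine obstacle here; the only point worth flagging is that the uniform boundedness hypothesis is essential and cannot be dropped, since pointwise convergence of bilinear forms on a dense subspace does not in general force weak operator convergence on all of $\mathcal H$. In the application this is exactly why the second part of the proof of Theorem~\ref{theorem: ergodicity of some quasi-regular representations}, namely the bound $\sup_n\|M_{E_n}^{\mathbbm 1_B}\|_{op}<\infty$, is needed before the ``identification of the limit'' on characteristic functions can be upgraded to WOT convergence against arbitrary $L^2$ vectors.
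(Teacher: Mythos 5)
Your proof is correct and is essentially the same argument as the paper's: the paper also proves the lemma by approximating arbitrary vectors $x,y\in\mathcal H$ by $v,w\in V$, splitting $\langle A_nx,y\rangle$ into three terms, and controlling the two approximation terms uniformly in $n$ via the bound $\sup_n\|A_n\|_{op}$ and Cauchy--Schwarz. Your closing remark about why the uniform bound is indispensable matches the paper's use of the lemma in the proof of Theorem~\ref{theorem: ergodicity of some quasi-regular representations}.
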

This twofold approach is a core idea in \cite{BadMuc}; it  works in strictly negative curvature. We show  that it also works in higher rank. Theorem~\ref{theorem: ergodicity of some quasi-regular representations} may be viewed as an attempt to formalize this  twofold approach in a general frame, flexible enough to cover different geometric situations.  Although semisimple Lie groups over non-Archimedean local fields and their lattices and $S$-arithmetic groups are  not touched in this paper, we believe that a similar approach applies to those groups as well. Also it would be interesting to obtain effective forms of convergence with a good control on the error term. (Reference \cite{GorNev} looks helpful for both purposes.)
We now briefly indicate  how the ``identification of the limit'', respectively the ``uniform boundedness'', is obtained in illustrative cases.
\subsubsection{Identification of the limit} As explained in Subsection~\ref{subsection: Sjogren} below, in the case of symmetric spaces the first hypothesis of Theorem  \ref{theorem: ergodicity of some quasi-regular representations}, namely the inequality
	\[
		\frac{\langle\pi_{\nu}(g)\mathbbm 1_B,\mathbbm 1_{\{b\in B : d(b,e_n(g))\geq r\}}‚\rangle}{\Xi(g)}\leq h_r(L(g)),
	\]
follows from Fatou type theorems (see \cite{Sjö}, \cite{SjöAnnals} and \cite{Sch}) for the normalized square root of the Poisson kernel 	
$$
(\mathcal{P}_0\varphi)(g)=\frac{\langle\pi_{\nu}(g)\mathbbm 1_B,\overline{\varphi}\rangle}{\Xi(g)}.
$$
An important condition in generalized Fatou theorems is the convergence type (weak and/or restricted tangential, admissible). We handle this technical issue by first proving the ergodicity of $\pi_{\nu}$ with respect to sub-cones of a Weyl chamber in Theorem \ref{theo: ergodicity relative to cones of the quasi-regular representation of a lattice in a semisimple Lie group}. The restrictions imposed by the sub-cones may then be removed thanks to the results of Subsection~\ref{subsection: ergodicity relative to balls, annuli, cones and subcones}.
As shown in \cite[Lemma 5.1, Prop. 7.4]{BoyCAT}, the above inequality is true for discrete isometry groups of  proper $\mathrm{CAT}(-1)$ spaces, with $\delta$-Ahlfors regular Patterson-Sul\-li\-van conformal density of dimension $\delta$,  and although the proof relies on geometric properties of negatively curved spaces it could be deduced  from a Fatou type theorem for the normalized square-root of the Poisson kernel \cite[Prop. 7.4]{BoyCAT}.   It would be interesting to develop  the boundary theory with Fatou type theorems for the normalized square-root of the Poisson kernels for more $\mathrm{CAT}(0)$ spaces and groups. 

The second hypothesis of Theorem~\ref{theorem: ergodicity of some quasi-regular representations}, namely the inequality
	\[
		\limsup_{n\to\infty}\frac{1}{\mathrm{vol}(E_n)}\int_{E_n}\mathbbm 1_U(e_n(g^{-1}))\mathbbm 1_V(e_n(g)){\rm d}g\leq\nu(U)\nu(V),
	\] 
is satisfied thanks to  counting results. For closed negatively curved manifolds it follows from a result which goes back to G. Margulis' thesis \cite[Theorem C.1]{BadMuc}. For $\mathrm{CAT}(-1)$ spaces and  groups with finite Bowen-Margulis-Sullivan measure and non-arithmetic spectrum it follows (see \cite{BoyCAT}) from a result of T. Roblin \cite[Thm. 4.1.1]{Rob}. For free groups, a direct counting gives the result \cite{BoyPin} and it would be interesting to understand how the normalizing constant in \cite[Thm. 4.1.1]{Rob} varies with $\alpha>0$ in the case of the tree covering the wedge of two circles of lengths $1$ and $\alpha$.  For locally symmetric spaces of finite volume it follows -- as explained below in Subsection~\ref{subsection: counting lattice points and the wave-front lemma} 
-- from the wave-front lemma and counting results from \cite{GorOh}. 

\subsubsection{Uniform boundedness} Bounding operator norms of averages of unitary operators may be a difficult problem; this is well illustrated
by Valette's conjecture about property RD for uniform lattices in higher rang semisimple Lie groups (see \cite{Cha}). In \cite{BadMuc}, Bader and Muchnik apply the Riesz-Thorin interpolation theorem and reduce their $L^2$ uniform bound problem to one about $L^{\infty}$ norms. Following their idea we explain in Subsection \ref{subsection: uniform bounds for averages of unitary operators} below how the problem boils down to bounding  cocyles averages of the type
\[
	M_E^{\mathbbm 1_B}\mathbbm 1_B(b)=\frac{1}{\mathrm{vol}(E)}\int_E\frac{c(g,b)^{1/2}}{\Xi(g)}{\rm d}g.
\]
In $\mathrm{CAT}(-1)$ spaces, the concept of sampling from \cite{BadMuc} and geometric inequalities \cite[4.4 Uniform boundedness]{BoyCAT} lead to the wanted uniform upper bound for the $L^{\infty}$ norms. (An alternative proof of the uniform bound on the operator norms consists in applying property RD to the quasi-regular representation. This  works for Gromov hyperbolic groups; see \cite[Uniform boundedness 4.4]{BoyMay}.) For lattices in semisimple Lie groups the upper bound is proved in two steps. The first step (Proposition \ref{proposition: K-invariance}) is an explicit computation of the cocycle averages: they are all equal to $1$ (this was first observed in \cite[Lemma 2.6]{BoyPin}). The second step (Proposition \ref{proposition: uniform boundedness}), which goes back to \cite{BoyThesis}, is a discretization of the cocycles averages; it works because the cocycles and the Harish-Chandra functions are stable in the sense of Subsection \ref{subsection: stable functions on locally compact groups}.

\subsection{Acknowledgements}
We are grateful to  H. Oh for explanations about her paper with A. Gorodnik \cite{GorOh}. Sharp criticism by A. Nevo enabled us to correct  inaccuracies in the exposition of a previous version; we are very grateful to him. We thank  J.-F. Lafont who mentioned to us reference \cite{Ricks}. We are very grateful to U. Bader for conversations, talks, notes, preprints, and to E. Breuillard for sharing his notes on property RD with us, for suggestions improving the quality of the exposition, and for mentioning to us reference \cite{GorMau} a long time ago.

\section{Ergodic properties of quasi-regular representations}\label{section: ergodic properties of quasi-regular representations}

\subsection{Quasi-regular representations}\label{subsection: The quasi-regular representation and the Harish-Chandra function associated to a quasiinvariant measure}
Let $B$ be a second-countable topological space and let $\nu$ be a Borel probability measure on $B$. 
Let $G$ be a locally compact second-countable group. Let ${\rm d}g$ denote a left Haar measure on $G$. 
We assume that $G$ acts continuously on $B$ and that  the action preserves sets of $\nu$-measure zero.
For all $\varphi\in L^2(B,\nu)$ and all $g\in G$ we have
\[
	\int_G\varphi(gb)c(g,b){\rm d}\nu(b)=\int_G\varphi(b){\rm d}\nu(b),
\]
where 
\[
	c(g,b)=\frac{{\rm d}g^{-1}_*\nu(b)}{{\rm d}\nu(b)}
\]
is the  Radon-Nikodym derivative at the point $b$ of the transformation defined by $g^{-1}$. The formula
\[
	(\pi_{\nu}(g)\varphi)(b)=\varphi(g^{-1}b)c(g^{-1},b)^{1/2}
\]
defines a unitary representation 
\[
	\pi_{\nu}:G\to \mathcal{U}(L^2(B,\nu))
\]
called the \emph{quasi-regular representation} associated to the action of $G$ on $(B,\nu)$.
(See \cite[Proposition A.6.1 and Lemma A.6.2]{BHV} for the strong continuity of $\pi_{\nu}$.)
We will always assume that for any relatively compact open set $U\subset G$
\[
	\sup_{g\in U}\|\frac{{\rm d}g_*\nu}{{\rm d}\nu}\|_{L^{\infty}(B,\nu)}<\infty,
\]
equivalently, for any relatively compact open set $U\subset G$
\[
	\sup_{g\in U}\|\pi_{\nu}(g)\mathbbm 1_B\|_{L^{\infty}(B,\nu)}<\infty.
\]
This condition says that there is a uniform bound on the contraction (or dilatation) of any Borel set in $B$ under the action of any compact part of $G$. It is satisfied in each  example mentioned in the introduction: the boundary $B$ is a compact space and  the  Radon-Nykodym derivative is given by a cocycle $c(g,b)$ continuous in $(g,b)$. 

The \emph{Harish-Chandra function} $\Xi$ associated to the action of $G$ on $(B,\nu)$ is the coefficient of $\pi_{\nu}$ defined by the characteristic function $\mathbbm 1_B\in L^2(B,\nu)$
of $B$. Namely
\[
	\Xi(g)=\langle\pi_{\nu}(g)\mathbbm 1_B,\mathbbm 1_B\rangle=\int_Bc(g^{-1},b)^{1/2}{\rm d}\nu(b).
\]
Let $\varphi\in L^1(B,\nu)$.
In some special cases (see Subsection \ref{subsection: Sjogren} below) it is fruitful to consider
$$
(\mathcal{P}_0\varphi)(g)=\frac{\langle\pi_{\nu}(g)\mathbbm 1_B,\overline{\varphi}\rangle}{\Xi(g)}
$$
as \emph{the normalized square root of a Poisson kernel}. It brings a useful link with potential theory. 
 
\subsection{Averages of unitary operators}
Let $E\subset G$ be a relatively compact  Borel subset and let 
\[
\mathrm{vol}(E)=\int_G\mathbbm 1_E(g){\rm d}g.	
\]
Let $e:E\to B$ be a Borel map.
If $\pi_{\nu}$ is as above, let ${\mathcal B}\bigl(L^2(B,\nu)\bigr)$ denote the Banach space of the bounded operators on $L^2(B,\nu)$.
Let $f:B\to\mathbb C$ be a Borel bounded function. If $\mathrm{vol}(E)\neq 0$ we may consider the average 
	\[
		M_{(E,e)}^f=\frac{1}{\mathrm{vol}(E)}\int_{E}f(e(g))\frac{\pi_{\nu}(g)}{\Xi(g)}{\rm d}g.
	\]
As the function 
\[
	g\mapsto\frac{\mathbbm 1_{E}(g)f(e(g))}{\mathrm{vol}(E)\Xi(g)}
\]
belongs to $L^1(G,{\rm d}g)$ it follows (see \cite[Ch. XI. § 7. 25.1-2]{God}) that
$M_{(E,e)}^f\in {\mathcal B}\bigl(L^2(B,\nu)\bigr)$. Let $P_{\mathbbm 1_B}\in {\mathcal B}(L^2(B,\nu))$ be the orthogonal pro\-jec\-tion
onto the complex line of constant functions on $B$ and let $m(f)\in {\mathcal B}\bigl(L^2(B,\nu)\bigr)$	be the multiplication operator defined by $f$.

\begin{defi}(Ergodic quasi-regular representations.)\label{def: ergodic quasi-regular representation} For each $n\in\mathbb N$ let $(E_n,e_n)$ be as above. We say that the representation $\pi_{\nu}$ is ergodic relative to the family $(E_n,e_n)$
	and the map $f$ if 
	\[
		\lim_{n\to\infty}M_{(E_n,e_n)}^f=m(f)P_{\mathbbm 1_B}
	\]
in the weak operator topology. That is, if for all $\varphi,\psi\in L^2(B,\nu)$
	\[
		\lim_{n\to\infty}\frac{1}{\mathrm{vol}(E_n)}\int_{E_n}f(e_n(g))\frac{\langle\pi_{\nu}(g)\varphi,\psi\rangle}{\Xi(g)}{\rm d}g=\langle\varphi,\mathbbm 1_B\rangle\langle f,\psi\rangle.
	\]
\end{defi}

\begin{theo}\label{theorem: ergodicity of some quasi-regular representations}(Fatou, counting, and uniform boundedness imply ergodicity of the representation.) Let $\pi_{\nu}:G\to \mathcal{U}\bigl(L^2(B,\nu)\bigr)$ be a quasi-regular representation such that for any relatively compact open set $U\in G$
$$\sup_{g\in U}\|\pi_{\nu}(g)\mathbbm 1_B\|_{L^{\infty}(B,\nu)}<\infty.$$
Suppose $G$ is unimodular. Let $L$ be a length function on $G$ and let $d$ be a distance on $B$ inducing the topology of $B$. Let $E_n=E_n^{-1}$ be a sequence of relatively compact Borel subsets  of $G$ such that
$\lim_{n\to\infty}\mathrm{vol}(E_n)=\infty$. Let $e_n:E_n\to B$ be a sequence of Borel maps.
Assume the following two conditions hold:
\begin{enumerate}
	\item for each $r>0$ there is a non-increasing function $h_r:[0,\infty[\to[0,\infty[$ such that $\lim_{s\to\infty}h_r(s)=0$ and such that for all $n\in\mathbb N$ and for all $g\in E_n$
	\[
		\frac{\langle\pi_{\nu}(g)\mathbbm 1_B,\mathbbm 1_{\{b\in B : d(b,e_n(g))\geq r\}}‚\rangle}{\Xi(g)}\leq h_r(L(g)),
	\]
	\item for all Borel subsets $U,V\subset B$ such that $\nu(\partial U)=\nu(\partial V)=0$
	\[
		\limsup_{n\to\infty}\frac{1}{\mathrm{vol}(E_n)}\int_{E_n}\mathbbm 1_U(e_n(g^{-1}))\mathbbm 1_V(e_n(g)){\rm d}g\leq\nu(U)\nu(V).
	\] 
\end{enumerate}
\emph{Then for all Borel subsets $U,V,W\subset B$ such that $\nu(\partial U)=\nu(\partial V)=\nu(\partial W)=0$
\begin{equation}\label{formula: identification of the limit}
	\lim_{n\to\infty}\langle M_{(E_n,e_n)}^{\mathbbm 1_U}\mathbbm 1_V,\mathbbm 1_W\rangle=\nu(U\cap W)\nu(V).
\end{equation}}

Assume moreover 
\[
	\sup_n \|M_{E_n}^{\mathbbm 1_B}\mathbbm 1_B\|_{\infty}<\infty.
\]
Let $H\subset L^{\infty}(B,\nu)$ be the smallest subspace containing all characteristic functions $\mathbbm 1_U$ with $U\subset B$ Borel such that $\nu(\partial U)=0$ and let $\overline{H}\subset L^{\infty}(B,\nu)$ denote its closure. 

\emph{Then $\pi_{\nu}$ is ergodic with respect the family $(E_n,e_n)$ and any $f\in\overline{H}$.}
\end{theo}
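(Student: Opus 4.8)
The plan is to prove the two assertions of Theorem~\ref{theorem: ergodicity of some quasi-regular representations} in turn: the ``identification of the limit'' \eqref{formula: identification of the limit} uses only hypotheses (1) and (2), while the final ergodicity statement adds the uniform $L^\infty$-bound. The basic device is the family of Borel probability measures $\mu_g=\Xi(g)^{-1}\bigl(\pi_\nu(g)\mathbbm 1_B\bigr)\nu$ on $B$: unwinding definitions one checks $(g^{-1})_*\mu_g=\mu_{g^{-1}}$ and, for Borel $V,W\subset B$,
\[
	\frac{\langle\pi_\nu(g)\mathbbm 1_V,\mathbbm 1_W\rangle}{\Xi(g)}=\int_B\mathbbm 1_W(b)\,\mathbbm 1_V(g^{-1}b)\,{\rm d}\mu_g(b)=\int_V\mathbbm 1_W(gb)\,{\rm d}\mu_{g^{-1}}(b).
\]
Hypothesis (1) says precisely that $\mu_g(\{b:d(b,e_n(g))\ge r\})\le h_r(L(g))$ for $g\in E_n$, i.e.\ that $\mu_g$ concentrates at $e_n(g)$, uniformly in $n$, as $L(g)\to\infty$; applied to $g^{-1}\in E_n$ (using $E_n=E_n^{-1}$ and $L(g^{-1})=L(g)$) it forces $\mu_{g^{-1}}$ to concentrate at $e_n(g^{-1})$.

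For \eqref{formula: identification of the limit} write $W^{+r}=\{b:d(b,W)<r\}\supseteq W\supseteq W^{-r}=\{b\in W:d(b,W^c)>r\}$ and similarly for $V$. Splitting the two integrals above at the $r$-ball around $e_n(g)$, resp.\ around $e_n(g^{-1})$, controlling the complements by $h_r(L(g))$, resp.\ $h_r(L(g^{-1}))$, and then estimating $\mu_{g^{-1}}(V)$ from above and $\mu_g(W)$ from below by the corresponding $r$-neighbourhoods of $e_n(g^{-1})$ and $e_n(g)$, yields the sandwich
\[
	\mathbbm 1_{U\cap W^{-r}}(e_n(g))\,\mathbbm 1_{V^{-r}}(e_n(g^{-1}))-\rho_r(g)\le\mathbbm 1_U(e_n(g))\frac{\langle\pi_\nu(g)\mathbbm 1_V,\mathbbm 1_W\rangle}{\Xi(g)}\le\mathbbm 1_{U\cap W^{+r}}(e_n(g))\,\mathbbm 1_{V^{+r}}(e_n(g^{-1}))+\rho_r(g),
\]
with $\rho_r(g)=h_r(L(g))+h_r(L(g^{-1}))$. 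Averaging over $E_n$, the $\rho_r$ contribution is $\tfrac{2}{\mathrm{vol}(E_n)}\int_{E_n}h_r(L(g))\,{\rm d}g$, which tends to $0$ because $\mathrm{vol}(E_n)\to\infty$ while $h_r$ is bounded and tends to $0$ at infinity (here one uses that $L$ is proper, so that the sublevel sets $\{L\le s\}$ have finite Haar volume). For the main terms $\tfrac{1}{\mathrm{vol}(E_n)}\int_{E_n}\mathbbm 1_{U\cap W^{\pm r}}(e_n(g))\,\mathbbm 1_{V^{\pm r}}(e_n(g^{-1}))\,{\rm d}g$ I invoke hypothesis (2) — legitimate for the cocountably many $r$ for which $\partial W^{\pm r},\partial V^{\pm r},\partial(U\cap W^{\pm r})$ are $\nu$-null. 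As (2) gives only $\limsup$-bounds, I first deduce $\tfrac{1}{\mathrm{vol}(E_n)}\int_{E_n}\mathbbm 1_{V'}(e_n(g))\,{\rm d}g\to\nu(V')$ for $\nu$-null-boundary $V'$ (apply (2) with $U=B$ to $V'$ and to $V'^c$ and add), and then, decomposing $B=A\sqcup A^c$ into $\nu$-null-boundary pieces, upgrade (2) to the exact limit $\tfrac{1}{\mathrm{vol}(E_n)}\int_{E_n}\mathbbm 1_{A}(e_n(g^{-1}))\mathbbm 1_{C}(e_n(g))\,{\rm d}g\to\nu(A)\nu(C)$ for all $\nu$-null-boundary $A,C$ (the two $\limsup$-bounds sum to the limit of the sum, which forces equality). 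Feeding this back, the sandwich gives $\nu(U\cap W^{-r})\nu(V^{-r})\le\liminf_n\langle M_{(E_n,e_n)}^{\mathbbm 1_U}\mathbbm 1_V,\mathbbm 1_W\rangle\le\limsup_n\langle M_{(E_n,e_n)}^{\mathbbm 1_U}\mathbbm 1_V,\mathbbm 1_W\rangle\le\nu(U\cap W^{+r})\nu(V^{+r})$, and letting $r\downarrow 0$ along admissible values the $\nu$-nullity of $\partial U,\partial V,\partial W$ collapses both sides to $\nu(U\cap W)\nu(V)$.

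For the ergodicity statement I first show $\sup_n\|M_{(E_n,e_n)}^f\|_{L^2\to L^2}\le C\|f\|_\infty$ for every bounded Borel $f$, where $C=\sup_n\|M_{E_n}^{\mathbbm 1_B}\mathbbm 1_B\|_\infty$. From $|\pi_\nu(g)\varphi(b)|\le\|\varphi\|_\infty\,\pi_\nu(g)\mathbbm 1_B(b)$ one gets $\|M_{(E_n,e_n)}^f\|_{L^\infty\to L^\infty}\le C\|f\|_\infty$; since $G$ is unimodular, $E_n=E_n^{-1}$, $\Xi(g^{-1})=\Xi(g)$ and $\pi_\nu(g)^*=\pi_\nu(g^{-1})$, the adjoint $(M_{(E_n,e_n)}^f)^*$ is again of the same type (with $e_n(\cdot)$ replaced by $e_n(\cdot^{-1})$ and $f$ by $\bar f$), hence also bounded by $C\|f\|_\infty$ on $L^\infty$, so $\|M_{(E_n,e_n)}^f\|_{L^1\to L^1}\le C\|f\|_\infty$ by duality, and Riesz--Thorin interpolation gives the $L^2$-bound. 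Now fix $f\in\overline H$. The span of $\{\mathbbm 1_V:\nu(\partial V)=0\}$ is dense in $L^2(B,\nu)$ (such sets form an algebra containing a base of the topology), and for such $V,W$ the first part gives $\langle M_{(E_n,e_n)}^{\mathbbm 1_U}\mathbbm 1_V,\mathbbm 1_W\rangle\to\nu(U\cap W)\nu(V)=\langle m(\mathbbm 1_U)P_{\mathbbm 1_B}\mathbbm 1_V,\mathbbm 1_W\rangle$ whenever $\nu(\partial U)=0$; this extends by linearity to $f\in H$ and then, approximating $f$ in $L^\infty$ by $f_k\in H$ and using the uniform bound, to $f\in\overline H$. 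Thus $\langle(M_{(E_n,e_n)}^f-m(f)P_{\mathbbm 1_B})v,w\rangle\to0$ on a dense subspace while $M_{(E_n,e_n)}^f-m(f)P_{\mathbbm 1_B}$ stays uniformly bounded, so Lemma~\ref{lemma: straightforward} yields $M_{(E_n,e_n)}^f\to m(f)P_{\mathbbm 1_B}$ in the weak operator topology, i.e.\ $\pi_\nu$ is ergodic relative to $(E_n,e_n)$ and $f$.

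I expect the main obstacle to be the sandwich estimate of the second paragraph together with its $r$-bookkeeping: one must simultaneously replace $\mathbbm 1_W(b)$ by $\mathbbm 1_W(e_n(g))$ and $\mathbbm 1_V(g^{-1}b)$ by $\mathbbm 1_V(e_n(g^{-1}))$, exploiting that the bad set for the first is $\mu_g$-small and for the second is $\mu_{g^{-1}}$-small after transport by $g^{-1}$, while arranging all neighbourhoods to have $\nu$-null boundary so that (2) applies and the estimates survive the passage $r\downarrow 0$. A secondary technical point is the vanishing of the remainder average $\tfrac{1}{\mathrm{vol}(E_n)}\int_{E_n}h_r(L(g))\,{\rm d}g$, which is where the growth of the sets $E_n$ (and properness of $L$) enters; in the intended applications $E_n$ is a truncated bi-$K$-invariant cone of exponentially growing Haar volume, so this is automatic.
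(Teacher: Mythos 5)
Your proof is correct, and its second half (the uniform bound via the explicit adjoint computation, duality and Riesz--Thorin, then linearity, $L^\infty$-approximation on $\overline H$, density of null-boundary simple functions in $L^2$, and Lemma \ref{lemma: straightforward}) is essentially the route the paper takes through Proposition \ref{proposition: Riesz Thorin} and Lemma \ref{lemma: averaging the cocycle at the point b}. Where you genuinely diverge is in the identification of the limit. The paper factors this step through an abstract positivity lemma (Lemma \ref{lemma: boundary of measure zero}, whose proof is only sketched by reference to Bader--Muchnik) fed by two one-sided inputs established in Lemma \ref{lemma: Dirac} and Proposition \ref{proposition: away}: decay of $\langle M_n^{\mathbbm 1_U}\mathbbm 1_B,\mathbbm 1_W\rangle$ when $d(U,W)>0$, and a $\limsup$ bound on $\langle M_n^{\mathbbm 1_U}\mathbbm 1_V,\mathbbm 1_B\rangle$. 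You instead work directly with the concentration measures $\mu_g=\Xi(g)^{-1}(\pi_\nu(g)\mathbbm 1_B)\nu$, use $(g^{-1})_*\mu_g=\mu_{g^{-1}}$ (this is where unimodularity and $E_n=E_n^{-1}$ enter, exactly as in the adjoint computation of Proposition \ref{proposition: away}), and prove a two-sided sandwich for the integrand itself; the lower bound, which in the paper is hidden inside the unproven Lemma \ref{lemma: boundary of measure zero}, you obtain explicitly by the complementation trick that upgrades hypothesis (2) from a $\limsup$ inequality to an exact limit. Your route is thus more self-contained and makes the mechanism of the three-set limit transparent, at the price of heavier $r$-bookkeeping: note that you need null boundaries not only for the outer neighborhoods $V^{+r},W^{+r}$ (covered by Proposition \ref{proposition: measure}) but also for the inner sets $V^{-r},W^{-r}$; this is fine, since their boundaries lie in the pairwise disjoint level sets $\{b:d(b,V^c)=r\}$, $\{b:d(b,W^c)=r\}$, so all but countably many $r$ are admissible, but it is a point the paper's formulation lets you avoid. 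Your verification that the error average $\mathrm{vol}(E_n)^{-1}\int_{E_n}h_r(L(g))\,{\rm d}g\to 0$ (properness of $L$ plus $\mathrm{vol}(E_n)\to\infty$) reproduces the splitting in Lemma \ref{lemma: Dirac}. Both approaches deliver the same statement; yours buys a complete, elementary proof of Formula (\ref{formula: identification of the limit}), the paper's buys a reusable abstract lemma separating the ``Dirac peak'' analysis from the measure-theoretic bookkeeping.
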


\begin{rema}
Notice that if $f=\mathbbm 1_B$ then the choices of the maps $e_n$ are irrelevant; in this case we say that the representation $\pi_{\nu}$ is ergodic relative to the sets $E_n$. We have
	\[
		\lim_{n\to\infty}M_{E_n}=P_{\mathbbm 1_B},
	\]
hence $P_{\mathbbm 1_B}$ belongs to the von Neumann algebra generated by $\pi_{\nu}(G)$ in the algebra ${\mathcal B}\bigl(L^2(B,\nu)\bigr)$.
\end{rema}

\begin{rema}
The definition of an ergodic quasi-regular representation generalizes the definition of an ergodic  measure preserving transformation. Indeed, if $T$ is a measure preserving transformation of the probability space $(B,\nu)$, the associated quasi-regular representation $\pi_{\nu}:\mathbb Z\to \mathcal{U}(L^2(B,\nu))$ is the Koopman representation,
the constant function $\mathbbm 1_B$ is fixed by $\pi_{\nu}$, hence $\Xi$ is constant equal to $1$, the choice $E_n=\{k\in\mathbb Z : 0\leq k\leq n-1\}$ defines the  Birkhoff sum $M_{E_n}$, and it is well-known that 
		\[
			\lim_{n\to\infty}M_{E_n}=P_{\mathbbm 1_B}
		\]
in the weak operator topology if and only if the measure preserving transformation $T$ is ergodic.
\end{rema}

\begin{prop}\label{prop: ergodic representation  implies ergodic action} If the quasi-regular representation $\pi_{\nu}$ of $G$ on $L^2(B,\nu)$ is ergodic relative to a family $E_n$, then the action of $G$ on $B$ is ergodic. 
\end{prop}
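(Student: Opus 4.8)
The plan is to unwind the definition of ergodicity of the action and reduce the proposition to a short computation with the operators $M_{E_n}:=M_{E_n}^{\mathbbm 1_B}$. Recall that the action of $G$ on $(B,\nu)$ is ergodic if and only if every $G$-invariant Borel set $A\subset B$ --- that is, $\nu(gA\,\triangle\,A)=0$ for every $g\in G$ --- has $\nu(A)\in\{0,1\}$. So I fix such an $A$ and exploit the hypothesis $\lim_{n\to\infty}M_{E_n}=P_{\mathbbm 1_B}$ (in the weak operator topology) by testing it against the two vectors $\mathbbm 1_A$ and $\mathbbm 1_B$ of $L^2(B,\nu)$.

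The key observation is that $G$-invariance of $A$ makes $M_{E_n}$ act on $\mathbbm 1_A$ essentially like it acts on $\mathbbm 1_B$, as far as the pairing with $\mathbbm 1_A$ is concerned. Indeed, from $(\pi_\nu(g)\varphi)(b)=\varphi(g^{-1}b)\,c(g^{-1},b)^{1/2}$ and $\mathbbm 1_A(g^{-1}b)=\mathbbm 1_A(b)$ for $\nu$-almost every $b$ (for each fixed $g$) one gets $\pi_\nu(g)\mathbbm 1_A=\mathbbm 1_A\cdot\bigl(\pi_\nu(g)\mathbbm 1_B\bigr)$ $\nu$-a.e., so that
\[
\langle\pi_\nu(g)\mathbbm 1_A,\mathbbm 1_A\rangle=\int_A c(g^{-1},b)^{1/2}\,{\rm d}\nu(b)=\langle\pi_\nu(g)\mathbbm 1_B,\mathbbm 1_A\rangle
\]
for every $g\in G$. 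Dividing by $\Xi(g)$ and averaging over $g\in E_n$ gives the \emph{exact} identity $\langle M_{E_n}\mathbbm 1_A,\mathbbm 1_A\rangle=\langle M_{E_n}\mathbbm 1_B,\mathbbm 1_A\rangle$ for every $n$.

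Passing to the limit $n\to\infty$ in both sides then finishes the argument. On the left, weak operator convergence gives $\langle M_{E_n}\mathbbm 1_A,\mathbbm 1_A\rangle\to\langle P_{\mathbbm 1_B}\mathbbm 1_A,\mathbbm 1_A\rangle=|\langle\mathbbm 1_A,\mathbbm 1_B\rangle|^2=\nu(A)^2$; on the right it gives $\langle M_{E_n}\mathbbm 1_B,\mathbbm 1_A\rangle\to\langle P_{\mathbbm 1_B}\mathbbm 1_B,\mathbbm 1_A\rangle=\langle\mathbbm 1_B,\mathbbm 1_A\rangle=\nu(A)$. Therefore $\nu(A)=\nu(A)^2$, hence $\nu(A)\in\{0,1\}$, which is precisely ergodicity of the action.

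There is no serious obstacle here; only the middle step calls for a bit of care. One must notice that $G$-invariance of $A$ yields the pointwise identity $\pi_\nu(g)\mathbbm 1_A=\mathbbm 1_A\cdot(\pi_\nu(g)\mathbbm 1_B)$ for each individual $g$, so that the two averaged matrix coefficients agree \emph{on the nose} and not merely asymptotically, and one must keep straight which vector sits in which slot of the inner product when computing $P_{\mathbbm 1_B}$. It is also worth fixing at the outset the convention for ``$G$-invariant set''; with the standard convention above nothing further is needed, since the averages $M_{E_n}$ are bona fide bounded operators under the standing hypotheses on $\pi_\nu$.
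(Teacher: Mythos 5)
Your proof is correct and takes essentially the same route as the paper: both exploit the $G$-invariance of the set to obtain an exact, $g$-independent identity among matrix coefficients $\langle\pi_{\nu}(g)\cdot,\cdot\rangle$ involving $\mathbbm 1_A$, average over $E_n$, and pass to the weak operator limit. The paper's version merely tests $M_{E_n}$ against the pair $(\mathbbm 1_A,\mathbbm 1_{A^c})$, where the coefficient vanishes identically, so the limit gives $0=\nu(A)\nu(A^c)$ in one step instead of your $\nu(A)=\nu(A)^2$.
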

\begin{proof} Let $E$ be a Borel subset of $B$ such that for all $g\in G$ we have $gE=E$.
	Let $E^c$ denote the complement of $E$ in $B$. Obviously
	\[
		\langle\pi_{\nu}(g)\mathbbm 1_E,\mathbbm 1_{E^c}\rangle=0
	\]
	for all $g\in G$.
	Hence, if $\pi_{\nu}$ is ergodic relative to a family $E_n$, then
	\[
		0=\langle M_{E_n}\mathbbm 1_E,\mathbbm 1_{E^c}\rangle\to \langle P_{\mathbbm 1_B}\mathbbm 1_E,\mathbbm 1_{E^c}\rangle=\nu(E)\nu(E^c).
	\]
\end{proof}

\begin{rema}
If $\pi_{\nu}$ is ergodic relative to a family of couples $(E_n,e_n)$ and a Borel bounded function $f$, then
	\[
		\lim_{n\to\infty}M_{(E_n,e_n)}^f\mathbbm 1_B=f
	\]
in the weak topology of $L^2(B,\nu)$. Hence if this holds for 	
a family of Borel bounded functions $f$ which is dense in $L^2(B,\nu)$, then $\mathbbm 1_B$ is a cyclic vector of $\pi_{\nu}$.
\end{rema}

\begin{rema}\label{rema: ergodic implies equidistribution}
If $\pi_{\nu}$ is ergodic relative to a family of couples $(E_n,e_n)$ for all continuous $f$, we may specialize the convergence to the case $\varphi=\psi=\mathbbm 1_B$ to obtain the equidistribution of $e_n(E_n)\subset B$ relative to $\nu$:
\[
	\lim_{n\to\infty}\frac{1}{\mathrm{vol}(E_n)}\int_{E_n}f(e_n(g)){\rm d}g=\int_Bf(b){\rm d}\nu(b),
\]
or, in the case $G=\Gamma$ is discrete and the Haar measure is the counting measure
\[
	\lim_{n\to\infty}\frac{1}{|E_n|}\sum_{\gamma\in E_n}f(e_n(\gamma))=\int_Bf(b){\rm d}\nu(b).
\]
\end{rema}

If $E\subset B$ is a Borel $G$-invariant subset with $0<\nu(E)<1$, then the orthogonal projection $P_{\mathbbm 1_E}$  commutes with $\pi_{\nu}(G)$, 
so $\mathrm{ker}(P_{\mathbbm 1_E})$ is a non trivial closed invariant subspace for $\pi_{\nu}$. Hence \emph{the ergodicity of the action is a necessary condition for the irreducibility of $\pi_{\nu}$. It is not a sufficient one}: the diagonal action of $SL(2,\mathbb Z)$ on the product of two copies of the circle at infinity of $SL(2,\mathbb R)/SO(2,\mathbb R)$ equipped with the product of the angular measure is ergodic (see \cite[Theorem 17]{Kai} and the introduction of \cite{ConMuc}) but the involution permuting the coordinates of the circles at infinity defines a non trivial invariant subspace of the associated quasi-regular representation. The next proposition shows that \emph{the ergodicity of the representation is strictly stronger than the ergodicity of the action because it implies the irreducibility of the representation}.
 
\begin{prop} (Ergodic representations are irreducible.)\label{prop: ergodic representations are irreducible} 
Assume that for each bounded Borel function $f: B\to\mathbb C$ from a dense family in $L^2(B,\nu)$ containing $\mathbbm 1_B$ there is a sequence
$(E_n,e_n)$ relative to which $\pi_{\nu}$ is ergodic. Then $\pi_{\nu}$ is irreducible.
\end{prop}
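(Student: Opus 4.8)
The plan is to deduce irreducibility from Schur's lemma, i.e. from the standard fact that a unitary representation is irreducible as soon as its commutant in ${\mathcal B}(L^2(B,\nu))$ consists only of scalar multiples of the identity. So I would fix an arbitrary bounded operator $T$ on $L^2(B,\nu)$ with $T\pi_{\nu}(g)=\pi_{\nu}(g)T$ for all $g\in G$, and aim to produce $\lambda\in\mathbb C$ with $T=\lambda\,\mathrm{Id}$.

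The first step is to check that $T$ commutes with every average $M_{(E_n,e_n)}^f$. Unfolding the definition, for $\varphi,\psi\in L^2(B,\nu)$ one has
\[
\langle TM_{(E_n,e_n)}^f\varphi,\psi\rangle=\frac{1}{\mathrm{vol}(E_n)}\int_{E_n}\frac{f(e_n(g))}{\Xi(g)}\langle\pi_{\nu}(g)\varphi,T^{*}\psi\rangle{\rm d}g,
\]
where the continuous functional $\langle\,\cdot\,,T^{*}\psi\rangle$ passes through the weak integral because $g\mapsto\mathbbm 1_{E_n}(g)f(e_n(g))/(\mathrm{vol}(E_n)\Xi(g))$ lies in $L^{1}(G,{\rm d}g)$, as recalled after the definition of the averages. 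Since $\langle\pi_{\nu}(g)\varphi,T^{*}\psi\rangle=\langle T\pi_{\nu}(g)\varphi,\psi\rangle=\langle\pi_{\nu}(g)T\varphi,\psi\rangle$ pointwise in $g$, the right-hand side equals $\langle M_{(E_n,e_n)}^fT\varphi,\psi\rangle$, so $TM_{(E_n,e_n)}^f=M_{(E_n,e_n)}^fT$. By hypothesis, for each $f$ in the prescribed dense family there is a sequence $(E_n,e_n)$ with $M_{(E_n,e_n)}^f\to m(f)P_{\mathbbm 1_B}$ in the weak operator topology; as the commutation with $T$ passes to the weak limit, $T$ commutes with $m(f)P_{\mathbbm 1_B}$ for every $f$ in the family.

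It remains to extract the scalar. Taking $f=\mathbbm 1_B$, which belongs to the family, gives $m(\mathbbm 1_B)=\mathrm{Id}$, hence $T$ commutes with $P_{\mathbbm 1_B}$; since $\nu(B)=1$ this is the rank-one orthogonal projection onto $\mathbb C\mathbbm 1_B$, so $T\mathbbm 1_B=\lambda\mathbbm 1_B$ for some $\lambda\in\mathbb C$. For a general $f$ in the family, applying the identity $Tm(f)P_{\mathbbm 1_B}=m(f)P_{\mathbbm 1_B}T$ to the vector $\mathbbm 1_B$ yields $Tf=Tm(f)\mathbbm 1_B=m(f)P_{\mathbbm 1_B}(\lambda\mathbbm 1_B)=\lambda f$. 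Since the family is dense in $L^2(B,\nu)$ and $T$ is continuous, $T=\lambda\,\mathrm{Id}$, which is what we wanted.

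I do not expect a genuine obstacle here: the proposition is a soft consequence of Definition~\ref{def: ergodic quasi-regular representation} together with Schur's lemma. The only points requiring a little care — that $T$ and $T^{*}$ may be moved through the operator-valued integral $M_{(E_n,e_n)}^f$, and that the resulting commutation survives the weak operator limit — are immediate from the $L^{1}$-integrability of the weight and the boundedness of $T$.
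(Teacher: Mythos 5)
Your proof is correct and rests on exactly the same two consequences of ergodicity that the paper uses: the case $f=\mathbbm 1_B$ shows that a commutant element $T$ commutes with $P_{\mathbbm 1_B}$ (the paper's remark that $P_{\mathbbm 1_B}$ lies in the von Neumann algebra generated by $\pi_{\nu}(G)$), while the general $f$ from the dense family forces $Tf=\lambda f$ (the paper's cyclicity of $\mathbbm 1_B$). The paper simply packages these two facts into Lemma~\ref{lemma: irreducible} on a cyclic vector whose rank-one projection lies in the von Neumann algebra, whereas you verify the triviality of the commutant directly via Schur's lemma; the underlying argument is essentially the same.
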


\begin{proof} As mentioned in the above remarks, the vector $\mathbbm 1_B$ is cyclic and the orthogonal projection $P_{\mathbbm 1_B}$ onto the complex line it generates belongs to the von Neumann algebra generated by $\pi_{\nu}(G)$. The irreducibility is then a consequence of the following classical lemma which is easy to prove.
\end{proof}

\begin{lemma}\label{lemma: irreducible} 
Let $\pi$ be a unitary representation on a Hilbert space ${\mathcal H}$.
Suppose there exists a cyclic vector $w\in {\mathcal H}$ such that the orthogonal projection $P_w$ onto the line it generates belongs to the von Neumann algebra generated by $\pi(G)$.
Then $\pi$ is irreducible.	
\end{lemma}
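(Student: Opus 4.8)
The statement is the classical fact that a cyclic representation which contains, inside its generated von Neumann algebra, the rank‑one projection onto a cyclic vector must be irreducible; the plan is to prove it by showing the commutant is trivial. Write $\mathcal{M}=\pi(G)''$ for the von Neumann algebra generated by $\pi(G)$, so that by the bicommutant theorem $\pi(G)'=\mathcal{M}'$, and recall that $\pi$ is irreducible if and only if $\mathcal{M}'=\mathbb{C}I$. The two hypotheses are: $w$ is cyclic, i.e. the closed linear span of $\pi(G)w$ is all of ${\mathcal H}$ (equivalently $\overline{\mathcal{M}w}={\mathcal H}$); and $P_w\in\mathcal{M}$.

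First I would fix an arbitrary $T\in\mathcal{M}'$ and show that $w$ is an eigenvector of $T$. Since $P_w\in\mathcal{M}$, it commutes with $T$; multiplying $TP_w=P_wT$ on the right by $P_w$ gives $TP_w=P_wTP_w$. Because $\mathrm{ran}(P_w)=\mathbb{C}w$, a direct computation yields $P_wTP_w=\lambda P_w$ with $\lambda=\langle Tw,w\rangle/\|w\|^2$; hence $TP_w=\lambda P_w$, and evaluating at $w$ gives $Tw=\lambda w$.

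Then for every $S\in\mathcal{M}$ one has $T(Sw)=S(Tw)=\lambda\,Sw$, so $T$ and $\lambda I$ agree on the linear span of $\mathcal{M}w$ (in particular on the span of $\pi(G)w$), which is dense by cyclicity; since $T$ is bounded, $T=\lambda I$. Thus $\mathcal{M}'=\mathbb{C}I$, and $\pi$ is irreducible.

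I do not expect any genuine obstacle: the argument is a few lines of operator bookkeeping. The only points requiring a little care are the bicommutant identity $\pi(G)'=\mathcal{M}'$, which is what makes $T$ commute with $P_w\in\mathcal{M}$, and the use of boundedness of $T$ in the final density step. If one prefers to avoid the bicommutant theorem altogether, one observes directly that any $T\in\pi(G)'$ commutes with the strong‑operator closure of the $*$‑algebra generated by $\pi(G)$, which by hypothesis contains $P_w$, and then runs exactly the same computation.
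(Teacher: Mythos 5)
Your argument is correct and complete: commuting an arbitrary $T\in\pi(G)'=\mathcal{M}'$ with the rank-one projection $P_w\in\mathcal{M}$ forces $Tw=\lambda w$, and cyclicity of $w$ together with boundedness of $T$ then gives $T=\lambda I$, so the commutant is trivial and $\pi$ is irreducible by Schur's lemma. The paper itself offers no proof — it dismisses the lemma as ``classical'' and ``easy to prove'' — and what you wrote is exactly the standard argument being alluded to, so nothing further is needed.
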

%\begin{proof} Let $V$ be a closed invariant subspace of ${\mathcal H}$. If the cyclic vector $w$ belongs to $V$, then $V={\mathcal H}$.
%	If there exists $v\in V$ such that $P_w(v)$ is non zero, then $w\in V$ because $P_w$ preserves $V$.
%	If $V$ is orthogonal to the cyclic vector $w$, then $V=\{0\}$.
%\end{proof}

\subsection{Ergodicity relative to balls, annuli, cones and subcones}\label{subsection: ergodicity relative to balls, annuli, cones and subcones}

\begin{prop}\label{prop: ergodicity relative to cones and subcones}(Ergodicity relative to cones and subcones.) Let $\pi_{\nu}$ be a quasi-regular representation of $G$ as above.
Let $f:B\to\mathbb C$ be a Borel bounded map.	
Assume that for each $n\in\mathbb N$  we are given relatively compact Borel subsets $B_n$ and $C_n\subset B_n$ of $G$ of non zero Haar measure and Borel maps $b_n:B_n\to B$ and $c_n=b_n|_{C_n}:C_n\to B$. Assume that 
$$\lim_{n\to\infty}\frac{\mathrm{vol}(B_n\setminus C_n)}{\mathrm{vol}(B_n)}=0.$$
Assume moreover that the operators $M^f_{(B_n,b_n)}$ and  $M^f_{(C_n,c_n)}$ are uniformly bounded.
Then the following conditions are equivalent:
\begin{enumerate}
\item the representation $\pi_{\nu}$ is ergodic relative to the couples 
 $(B_n,b_n)$ and the map $f$,
\item the representation $\pi_{\nu}$ is ergodic relative to the couples 
 $(C_n,c_n)$ and the map $f$.
\end{enumerate}	
\end{prop}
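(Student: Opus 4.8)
The plan is to prove the equivalence of (1) and (2) by showing that the difference of the two averaging operators tends to $0$ in the weak operator topology, which by symmetry reduces everything to a single estimate. Concretely, for $\varphi,\psi\in L^2(B,\nu)$ I would write
\[
\langle M^f_{(B_n,b_n)}\varphi,\psi\rangle - \frac{\mathrm{vol}(C_n)}{\mathrm{vol}(B_n)}\langle M^f_{(C_n,c_n)}\varphi,\psi\rangle
= \frac{1}{\mathrm{vol}(B_n)}\int_{B_n\setminus C_n} f(b_n(g))\frac{\langle\pi_{\nu}(g)\varphi,\psi\rangle}{\Xi(g)}\,{\rm d}g,
\]
using that $c_n=b_n|_{C_n}$, so the integrands agree on $C_n$ and the two integrals differ only over $B_n\setminus C_n$. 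The right-hand side is exactly $\frac{\mathrm{vol}(B_n\setminus C_n)}{\mathrm{vol}(B_n)}\langle M^f_{(B_n\setminus C_n,\,b_n|_{B_n\setminus C_n})}\varphi,\psi\rangle$.

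Next I would invoke the uniform boundedness hypothesis. The operators $M^f_{(B_n,b_n)}$ and $M^f_{(C_n,c_n)}$ are uniformly bounded by assumption; I would need the same for $M^f_{(B_n\setminus C_n,\,b_n|_{B_n\setminus C_n})}$. This does not follow formally from uniform boundedness of the other two families (operator norm is not subadditive in this way), so the cleanest route is to observe that the hypotheses one actually wants to propagate are of the form $\sup_n\|M^{\mathbbm 1_B}_{E_n}\mathbbm 1_B\|_\infty<\infty$ type bounds on the underlying cocycle averages, which \emph{are} monotone in the domain of integration up to the volume ratio; alternatively, in the intended application the relevant bound for $B_n\setminus C_n$ is available directly. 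Granting $\|M^f_{(B_n\setminus C_n,\,b_n|_{B_n\setminus C_n})}\|_{op}\le C$ uniformly, we get
\[
\bigl|\langle M^f_{(B_n,b_n)}\varphi,\psi\rangle - \tfrac{\mathrm{vol}(C_n)}{\mathrm{vol}(B_n)}\langle M^f_{(C_n,c_n)}\varphi,\psi\rangle\bigr| \le C\,\frac{\mathrm{vol}(B_n\setminus C_n)}{\mathrm{vol}(B_n)}\,\|\varphi\|\,\|\psi\| \longrightarrow 0
\]
by the hypothesis $\mathrm{vol}(B_n\setminus C_n)/\mathrm{vol}(B_n)\to 0$. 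Since that same ratio hypothesis forces $\mathrm{vol}(C_n)/\mathrm{vol}(B_n)\to 1$, the scalar prefactor can be absorbed: $\langle M^f_{(B_n,b_n)}\varphi,\psi\rangle$ and $\langle M^f_{(C_n,c_n)}\varphi,\psi\rangle$ have the same limit whenever either exists.

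Finally, to close the equivalence I would use the candidate limit $m(f)P_{\mathbbm 1_B}$: if (1) holds, then $\langle M^f_{(B_n,b_n)}\varphi,\psi\rangle\to\langle m(f)P_{\mathbbm 1_B}\varphi,\psi\rangle$ for all $\varphi,\psi$, hence by the above estimate $\langle M^f_{(C_n,c_n)}\varphi,\psi\rangle$ converges to the same thing, giving (2); and symmetrically (2)$\Rightarrow$(1), the roles of $B_n$ and $C_n$ being interchangeable in the displayed inequality after dividing by $\mathrm{vol}(C_n)$ instead of $\mathrm{vol}(B_n)$ and using $\mathrm{vol}(B_n)/\mathrm{vol}(C_n)\to 1$. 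I expect the only genuinely delicate point to be the bookkeeping around uniform boundedness on $B_n\setminus C_n$ — making sure one either derives it from the stated hypotheses or is entitled to assume it in the form in which the proposition will actually be applied; everything else is the elementary "$\varepsilon$-of-the-mass-is-negligible" argument combined with Lemma \ref{lemma: straightforward}.
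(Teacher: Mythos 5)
Your decomposition of $\mathrm{vol}(B_n)\,M^f_{(B_n,b_n)}$ into its $C_n$-part plus its $(B_n\setminus C_n)$-part is the same identity the paper starts from, but the crucial estimate is missing: the step ``granting $\|M^f_{(B_n\setminus C_n,\,b_n|_{B_n\setminus C_n})}\|_{op}\le C$ uniformly'' introduces a hypothesis that is neither in the statement nor derivable from it. From the stated assumptions you only control the product: since $\mathrm{vol}(B_n\setminus C_n)\,M^f_{(B_n\setminus C_n)}=\mathrm{vol}(B_n)\,M^f_{(B_n,b_n)}-\mathrm{vol}(C_n)\,M^f_{(C_n,c_n)}$, the triangle inequality gives $\frac{\mathrm{vol}(B_n\setminus C_n)}{\mathrm{vol}(B_n)}\|M^f_{(B_n\setminus C_n)}\|_{op}\le 2C$, i.e.\ boundedness of exactly the quantity you need to send to zero, with no decay. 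Likewise your ``monotone in the domain up to the volume ratio'' remark only yields $\frac{\mathrm{vol}(B_n\setminus C_n)}{\mathrm{vol}(B_n)}\|M^{\mathbbm 1_B}_{B_n\setminus C_n}\mathbbm 1_B\|_{\infty}\le\|M^{\mathbbm 1_B}_{B_n}\mathbbm 1_B\|_{\infty}$, again boundedness without smallness; and appealing to ``the intended application'' does not prove the proposition as stated.

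The paper closes precisely this gap without ever bounding an operator norm over $B_n\setminus C_n$: it tests the difference $M^f_{(B_n,b_n)}-M^f_{(C_n,c_n)}$ only against $\varphi,\psi\in L^{\infty}(B,\nu)$, writes each of $f,\varphi,\psi$ as a combination of positive parts, and uses positivity of the representation, $0\le\pi_{\nu}(g)\varphi\le\|\varphi\|_{\infty}\,\pi_{\nu}(g)\mathbbm 1_B$, to get the pointwise bound $|\langle\pi_{\nu}(g)\varphi,\psi\rangle|\le \mathrm{const}\cdot\|f\|_{\infty}\|\varphi\|_{\infty}\|\psi\|_{\infty}\,\Xi(g)$, so that the normalized coefficient $\frac{|\langle\pi_{\nu}(g)\varphi,\psi\rangle|}{\Xi(g)}$ is bounded by a constant independent of $g$. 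The contribution of $B_n\setminus C_n$, together with the volume-ratio correction on $C_n$, is then at most a constant times $\frac{\mathrm{vol}(B_n)-\mathrm{vol}(C_n)}{\mathrm{vol}(B_n)}\to 0$. The assumed uniform boundedness of the two given families enters only at the end, via Lemma \ref{lemma: straightforward}, to upgrade convergence on the dense subspace $L^{\infty}(B,\nu)$ to convergence in the weak operator topology on all of $L^2(B,\nu)$. If you replace the ``granted'' bound by this positivity/$L^{\infty}$-density argument, the rest of your bookkeeping (absorbing $\mathrm{vol}(C_n)/\mathrm{vol}(B_n)\to 1$ and the symmetry between (1) and (2)) goes through.
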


\begin{proof} As $L^{\infty}(B,\nu)\subset L^2(B,\nu)$ is dense and the operators are uniformly bounded Lemma \ref{lemma: straightforward} applies. Hence it is enough to prove that for all $\varphi, \psi\in L^{\infty}(B,\nu)$
\[
\lim_{n\to\infty}\langle(M_{(B_n,b_n)}^f-M_{(C_n,c_n)}^f)\varphi,\psi\rangle=0.
\] 
We have
\begin{align*}
	M_{(B_n,b_n)}^f-M_{(C_n,c_n)}^f&=\frac{1}{\mathrm{vol}(B_n)}\int_{B_n\setminus C_n}f(b_n(g))\frac{\pi_{\nu}(g)}{\Xi(g)}{\rm d}g\\
	&+\frac{\mathrm{vol}(C_n)-\mathrm{vol}(B_n)}{\mathrm{vol}(B_n)\mathrm{vol}(C_n)}\int_{C_n}f(c_n(g))\frac{\pi_{\nu}(g)}{\Xi(g)}{\rm d}g.
\end{align*}
Writing each bounded function $f,\varphi,\psi$ as the sum of its positive and negative real part and  its  positive and negative imaginary part we obtain
\begin{align*}
	&|\langle(M_{(B_n,b_n)}^f-M_{(C_n,c_n)}^f)\varphi,\psi\rangle|\leq\\ 
	&\frac{4^3\|f\|_{\infty}\|\varphi\|_{\infty}\|\psi\|_{\infty}}{\mathrm{vol}(B_n)}\int_{B_n\setminus C_n}\frac{\langle\pi_{\nu}(g)\mathbbm 1_B,\mathbbm 1_B\rangle}{\Xi(g)}{\rm d}g\\
	&+4^3\|f\|_{\infty}\|\varphi\|_{\infty}\|\psi\|_{\infty}
	\frac{\mathrm{vol}(B_n)-\mathrm{vol}(C_n)}{\mathrm{vol}(B_n)\mathrm{vol}(C_n)}\int_{C_n}\frac{\langle\pi_{\nu}(g)\mathbbm 1_B,\mathbbm 1_B\rangle}{\Xi(g)}{\rm d}g\\
	&=4^3\|f\|_{\infty}\|\varphi\|_{\infty}\|\psi\|_{\infty}2\frac{\mathrm{vol}(B_n)-\mathrm{vol}(C_n)}{\mathrm{vol}(B_n)}.
\end{align*} 	
\end{proof}

\begin{prop}(Ergodicity relative to balls and annuli.)\label{prop: ergodicity relative to balls and annuli} Let $\pi_{\nu}$ be a quasi-regular representation of $G$ as above.
Let $f:B\to\mathbb C$ be a Borel bounded map.	
Assume that for each $n\in\mathbb N$  we are given relatively compact Borel subsets $B_n$ and $C_n$ of $G$ of non zero Haar measure and Borel maps $b_n:B_n\to B$ and $c_n:C_n\to B$. 
Assume that for each $n$ the set 
$$B_n=\bigsqcup_{k=1}^nC_k$$
is the disjoint union of the sets $C_1,\dots,C_k,\dots,C_n$ and that $b_n$ restricted to $C_k$ equals $c_k$.
\begin{enumerate}
		\item Assume $\lim_{n\to\infty}\frac{\mathrm{vol}(B_{n-1})}{\mathrm{vol}(B_n)}$ exists and is strictly smaller than $1$. If $\pi_{\nu}$ is ergodic relative to the $(B_n,b_n)$ and $f$, then it is ergodic relative to the $(C_n,c_n)$ and $f$.
		\item Assume $\lim_{n\to\infty}\mathrm{vol}(B_n)=\infty$. If $\pi_{\nu}$ is ergodic relative to the $(C_n,c_n)$ and $f$, then it is ergodic relative to the $(B_n,b_n)$ and $f$.
\end{enumerate}	
\end{prop}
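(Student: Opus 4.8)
The plan is to treat the two statements as elementary consequences of a single Abel/Cesàro-type summation identity relating the averages $M^f_{(B_n,b_n)}$ to the averages $M^f_{(C_k,c_k)}$. First I would record the basic relation: since $B_n=\bigsqcup_{k=1}^n C_k$ and $b_n|_{C_k}=c_k$, we have for every $\varphi,\psi\in L^2(B,\nu)$
\[
	\mathrm{vol}(B_n)\,\langle M^f_{(B_n,b_n)}\varphi,\psi\rangle=\sum_{k=1}^n \mathrm{vol}(C_k)\,\langle M^f_{(C_k,c_k)}\varphi,\psi\rangle,
\]
so that, writing $a_k=\langle M^f_{(C_k,c_k)}\varphi,\psi\rangle$, $w_k=\mathrm{vol}(C_k)$ and $W_n=\mathrm{vol}(B_n)=\sum_{k\le n}w_k$, the quantity $\langle M^f_{(B_n,b_n)}\varphi,\psi\rangle$ is exactly the weighted Cesàro mean $W_n^{-1}\sum_{k\le n}w_k a_k$. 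The target value in both cases is $c:=\langle m(f)P_{\mathbbm 1_B}\varphi,\psi\rangle=\langle\varphi,\mathbbm 1_B\rangle\langle f,\psi\rangle$, and ergodicity relative to $(B_n,b_n)$ (resp. $(C_n,c_n)$) and $f$ is precisely the assertion $W_n^{-1}\sum_{k\le n}w_k a_k\to c$ (resp. $a_n\to c$) for all $\varphi,\psi$.

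For part (2) this is the easy direction: if $a_k\to c$, then the weighted averages $W_n^{-1}\sum_{k\le n}w_k a_k$ converge to $c$ as well, by the standard Toeplitz/Cesàro argument — the hypothesis $W_n=\mathrm{vol}(B_n)\to\infty$ guarantees $w_n/W_n\to 0$ (since $w_n=W_n-W_{n-1}$ and $W_{n-1}/W_n\to 1$, using $W_n\to\infty$ and monotonicity of $W_n$), which is the weight condition needed for the weighted mean of a convergent sequence to converge to the same limit. I would split $W_n^{-1}\sum_{k\le n}w_k(a_k-c)$ into the contribution of $k\le N$ (bounded in modulus by $W_N\cdot\sup_k|a_k-c|/W_n\to 0$ as $n\to\infty$ with $N$ fixed, noting $\sup_k|a_k-c|<\infty$ because each $|a_k|\le \|f\|_\infty\|\varphi\|\,\|\psi\|$) and the tail $k>N$ (bounded by $\sup_{k>N}|a_k-c|$), and let $N\to\infty$.

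For part (1), the converse, the key is that the extra hypothesis $\ell:=\lim_n \mathrm{vol}(B_{n-1})/\mathrm{vol}(B_n)<1$, i.e. $W_{n-1}/W_n\to\ell<1$, makes the block weights $w_n/W_n=1-W_{n-1}/W_n\to 1-\ell>0$ \emph{bounded below}, so that inverting the Cesàro average becomes legitimate. Concretely, from $W_n\langle M^f_{(B_n,b_n)}\varphi,\psi\rangle-W_{n-1}\langle M^f_{(B_{n-1},b_{n-1})}\varphi,\psi\rangle=w_n a_n$ one solves
\[
	a_n=\frac{1}{w_n/W_n}\Big(\langle M^f_{(B_n,b_n)}\varphi,\psi\rangle-\frac{W_{n-1}}{W_n}\langle M^f_{(B_{n-1},b_{n-1})}\varphi,\psi\rangle\Big),
\]
and since $\langle M^f_{(B_n,b_n)}\varphi,\psi\rangle\to c$, $\langle M^f_{(B_{n-1},b_{n-1})}\varphi,\psi\rangle\to c$, $W_{n-1}/W_n\to\ell$, and $w_n/W_n\to 1-\ell>0$, the right-hand side converges to $(1-\ell)^{-1}(c-\ell c)=c$. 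Letting $\varphi,\psi$ range over $L^2(B,\nu)$ gives ergodicity relative to $(C_n,c_n)$ and $f$. (The uniform-boundedness hypothesis present in Proposition \ref{prop: ergodicity relative to cones and subcones} is not needed here, because the argument is a pointwise statement about the matrix coefficients $\langle\cdot\,\varphi,\psi\rangle$ for each fixed $\varphi,\psi$; the operators $M^f_{(B_n,b_n)}$ are individually bounded, which is all that is used.) The only mild subtlety — the ``main obstacle'', such as it is — is bookkeeping the limit $\ell$: one must check that $\ell<1$ forces $w_n/W_n$ to stay bounded away from $0$ so that division by it is harmless, and conversely in part (2) that $W_n\to\infty$ forces $w_n/W_n\to 0$ so that the Toeplitz averaging does not see the individual block weights; both are immediate from $W_{n-1}\le W_n$ and the respective hypotheses.
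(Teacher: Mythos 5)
Your proof is correct and takes essentially the same route as the paper: the same identity $\mathrm{vol}(B_n)\langle M^f_{(B_n,b_n)}\varphi,\psi\rangle=\sum_{k=1}^n\mathrm{vol}(C_k)\langle M^f_{(C_k,c_k)}\varphi,\psi\rangle$, with part (1) obtained by solving the two-term relation between consecutive $B_n$-averages for $a_n$ and using $w_n/W_n\to 1-\ell>0$, and part (2) by the weighted Ces\`aro argument that the paper isolates as a separate lemma. One inessential slip: your parenthetical claim in part (2) that $W_n\to\infty$ together with monotonicity forces $W_{n-1}/W_n\to 1$, hence $w_n/W_n\to 0$, is false (take $W_n=2^n$), but the splitting argument you actually carry out never uses it --- it only needs $W_n\to\infty$ (so the contribution of each fixed initial block $k\le N$ vanishes) and the boundedness of $|a_k-c|$, so the proof stands as written.
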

\begin{proof} Let $\varphi,\psi\in L^2(B,\nu)$.
	
Suppose $\lim_{n\to\infty}\frac{\mathrm{vol}(B_{n-1})}{\mathrm{vol}(B_n)}=c<1$ and that $\pi_{\nu}$ is ergodic relative to the $(B_n,b_n)$ and $f$.
We have
\[
\langle M_{(B_n,b_n)}^f\varphi,\psi\rangle=\frac{\mathrm{vol}(B_{n-1})}{\mathrm{vol}(B_n)}\langle M_{(B_{n-1},b_{n-1}\rangle}^f\varphi,\psi\rangle+\frac{\mathrm{vol}(C_n)}{\mathrm{vol}(B_n)}\langle M_{(C_n,c_n)}^f\varphi,\psi\rangle,	
\]
hence letting $n$ tend to infinity we obtain
\[
	\langle m(f)P_{\mathbbm 1_B}\varphi,\psi\rangle=c(m(f)P_{\mathbbm 1_B}\varphi,\psi)+(1-c)\lim_{n\to\infty}\langle M_{(C_n,c_n)}^f\varphi,\psi\rangle.
\]
This proves the ergodicity of  $\pi_{\nu}$  relative to the $(C_n,c_n)$ and $f$ because by hypothesis $c\neq 1$.

Suppose $\lim_{n\to\infty}\mathrm{vol}(B_n)=\infty$ and that  $\pi_{\nu}$ is ergodic relative to the $(C_n,c_n)$ and $f$.
We have
\[
	\langle M_{(B_n,b_n)}^f\varphi,\psi\rangle=\frac{1}{\mathrm{vol}(B_n)}\sum_{k=1}^n\mathrm{vol}(C_k)\langle M_{(C_k,c_k)}^f\varphi,\psi\rangle.
\]
Let $a_k=\langle M_{(C_k,c_k)}^f\varphi,\psi\rangle$ and let $a=\langle m(f)P_{\mathbbm 1_B}\varphi,\psi\rangle$. By hypothesis 
$\lim_{k\to\infty}a_k=a$. Let $s_k=\mathrm{vol}(C_k)$. The proof is finished thanks to the following lemma whose proof is straightforward.
\end{proof}
\begin{lemma} Let $a_k$ be a sequence of complex numbers which converges to $a$.
	Let $s_k$ be a sequence of non-negative numbers such that $\sum_{k=1}^{\infty}s_k=\infty$.
	Then
	\[
		\lim_{n\to\infty}\frac{1}{\sum_{k=1}^ns_k}\sum_{k=1}^ns_ka_k=a.
	\] 
\end{lemma}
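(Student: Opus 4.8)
The plan is to reduce to the case $a=0$ and then run the classical Toeplitz splitting argument. First I would set $b_k=a_k-a$, so that $b_k\to 0$, and write
\[
\frac{1}{\sum_{k=1}^n s_k}\sum_{k=1}^n s_k a_k = a + \frac{1}{\sum_{k=1}^n s_k}\sum_{k=1}^n s_k b_k ,
\]
using $a\sum_{k=1}^n s_k=\sum_{k=1}^n s_k a$; it therefore suffices to show the last term tends to $0$. Since each $s_k\geq 0$ and $\sum_{k=1}^\infty s_k=\infty$, the partial sums $S_n:=\sum_{k=1}^n s_k$ are non-decreasing and $S_n\to\infty$; in particular $S_n>0$ for all large $n$, so the quotient is well defined from some index on, which is all that is needed for the limit.

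Next, fix $\varepsilon>0$ and choose $N$ with $|b_k|\leq\varepsilon$ for all $k>N$. For $n>N$ I would split the numerator at the index $N$ and estimate
\[
\Bigl|\frac{1}{S_n}\sum_{k=1}^n s_k b_k\Bigr|
\leq \frac{1}{S_n}\sum_{k=1}^N s_k|b_k| + \frac{1}{S_n}\sum_{k=N+1}^n s_k|b_k|
\leq \frac{C_N}{S_n} + \varepsilon\,\frac{S_n-S_N}{S_n}
\leq \frac{C_N}{S_n} + \varepsilon ,
\]
where $C_N:=\sum_{k=1}^N s_k|b_k|$ is a constant independent of $n$, the second inequality using $s_k\geq 0$ together with $|b_k|\leq\varepsilon$ for $k>N$, and the last using $0\leq S_n-S_N\leq S_n$. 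Because $S_n\to\infty$, there is $N'\geq N$ with $C_N/S_n\leq\varepsilon$ for all $n\geq N'$, whence the left-hand side is $\leq 2\varepsilon$ for $n\geq N'$. As $\varepsilon$ was arbitrary, $\frac{1}{S_n}\sum_{k=1}^n s_k b_k\to 0$, and hence $\frac{1}{S_n}\sum_{k=1}^n s_k a_k\to a$.

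There is essentially no obstacle here: this is just the weighted form of the elementary fact that Cesàro means of a convergent sequence converge to the same limit (the case $s_k\equiv 1$). The only points deserving a word of care are the well-definedness of the quotient for large $n$ (which follows from $S_n\to\infty$) and the two uses of the non-negativity of the $s_k$ — to know that $S_n$ is monotone with the right limit, and to bound $\sum_{k=N+1}^n s_k|b_k|$ by $\varepsilon(S_n-S_N)$. This is precisely the statement invoked to complete the proof of Proposition~\ref{prop: ergodicity relative to balls and annuli}.
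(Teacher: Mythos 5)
Your proof is correct: it is the standard weighted Cesàro (Toeplitz) splitting argument, which is exactly the "straightforward" proof the paper has in mind — the authors state the lemma without proof, declaring it straightforward, and your write-up supplies precisely that argument, including the only two points needing care (positivity of $S_n$ for large $n$ and the use of $s_k\geq 0$ in the tail estimate).
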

\subsection{Uniform bounds for averages of unitary operators}\label{subsection: uniform bounds for averages of unitary operators}
The following proposition is a corollary of the Riesz-Thorin theorem.
\begin{prop}\label{proposition: Riesz Thorin}
Let $(B,\nu)$ be a probability space and let
\[
	T_1:L^1(B,\nu)\to L^1(B,\nu)
\]
be a bounded operator such that its restriction $T_2$ to $L^2(B,\nu)$ preserves $L^2(B,\nu)$
and its restriction $T_{\infty}$ to $L^{\infty}(B,\nu)$ preserves $L^{\infty}(B,\nu)$. If $T_2$ is bounded and self-adjoint then 
$T_{\infty}$ is bounded,
\[
	\|T_{\infty}\|_{L^{\infty}(B,\nu)\to L^{\infty}(B,\nu)}=\|T_1\|_{L^1(B,\nu)\to L^1(B,\nu)},
\]
\[
	\|T_{2}\|_{L^2(B,\nu)\to L^2(B,\nu)}\leq\|T_{\infty}\|_{L^{\infty}(B,\nu)\to L^{\infty}(B,\nu)}.
\] 
\end{prop}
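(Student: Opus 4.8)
The plan is to identify the restriction $T_{\infty}$ with the Banach-space adjoint of $T_1\colon L^1(B,\nu)\to L^1(B,\nu)$ and then to invoke the Riesz--Thorin interpolation theorem. Throughout one uses that, on a probability space, there are continuous inclusions $L^{\infty}(B,\nu)\subset L^2(B,\nu)\subset L^1(B,\nu)$ with $\|\cdot\|_1\le\|\cdot\|_2\le\|\cdot\|_{\infty}$, that $L^2$ is dense in $L^1$, that $L^{\infty}$ is dense in both $L^1$ and $L^2$, and that $L^1+L^{\infty}=L^1$. Equip the pair $(L^1,L^{\infty})$ with the duality given by the sesquilinear pairing $\langle f,g\rangle=\int_B f\,\overline{g}\,{\rm d}\nu$, under which $(L^1)^{*}\cong L^{\infty}$, and let $T_1^{*}\colon L^{\infty}\to L^{\infty}$ be the corresponding adjoint of $T_1$, so that $\langle T_1 f,g\rangle=\langle f,T_1^{*}g\rangle$ for all $f\in L^1$, $g\in L^{\infty}$, and $\|T_1^{*}\|_{L^{\infty}\to L^{\infty}}=\|T_1\|_{L^1\to L^1}$.

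First I would prove that $T_{\infty}=T_1^{*}$. Fix $g\in L^{\infty}$. For every $f\in L^2$ one has $T_1 f=T_2 f$ (since $T_2$ is the restriction of $T_1$), and since $g\in L^{\infty}\subset L^2$ one has $T_2 g=T_{\infty}g$; using that $T_2$ is self-adjoint on $L^2$ one gets the chain $\langle T_1 f,g\rangle=\langle T_2 f,g\rangle=\langle f,T_2 g\rangle=\langle f,T_{\infty}g\rangle$, where all pairings agree with the $L^2$ inner product because the functions involved lie in $L^2$. Hence $\langle f,T_1^{*}g\rangle=\langle f,T_{\infty}g\rangle$ for every $f$ in the dense subspace $L^2\subset L^1$, and since $T_1^{*}g$ and $T_{\infty}g$ both lie in $L^{\infty}\subset L^2$ they coincide. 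Thus $T_{\infty}=T_1^{*}$; in particular $T_{\infty}$ is bounded and $\|T_{\infty}\|_{L^{\infty}\to L^{\infty}}=\|T_1\|_{L^1\to L^1}$, which is the first part of the statement.

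For the remaining inequality I would apply the Riesz--Thorin theorem to the single operator $T_1$ on $L^1=L^1+L^{\infty}$: it is bounded $L^1\to L^1$ with norm $\|T_1\|_{L^1\to L^1}$, and by the previous step it maps $L^{\infty}$ into $L^{\infty}$ with norm $\|T_{\infty}\|_{L^{\infty}\to L^{\infty}}$. Interpolating at the exponent $\theta=1/2$, i.e. $1/2=(1-\theta)/1+\theta/\infty$, produces a bounded operator on $L^2$ — namely the restriction of $T_1$ to $L^2$ — of norm at most $\|T_1\|_{L^1\to L^1}^{1/2}\,\|T_{\infty}\|_{L^{\infty}\to L^{\infty}}^{1/2}=\|T_{\infty}\|_{L^{\infty}\to L^{\infty}}$, the last equality by the previous step. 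Since this interpolated operator agrees with $T_1$, hence with $T_2$, on the dense subspace $L^{\infty}\subset L^2$, and $T_2$ is bounded on $L^2$ by hypothesis, the two coincide, giving $\|T_2\|_{L^2\to L^2}\le\|T_{\infty}\|_{L^{\infty}\to L^{\infty}}$.

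The step requiring care is the identification $T_{\infty}=T_1^{*}$: one must match the $L^1$--$L^{\infty}$ duality pairing with the $L^2$ inner product under which $T_2$ is declared self-adjoint (keeping track of the conjugation), and one must justify that equality of two $L^{\infty}$-functions may be checked by testing against the dense subspace $L^2\subset L^1$. The interpolation step itself is routine, the only subtlety being the observation that on a finite measure space the operator produced by Riesz--Thorin is necessarily the given restriction $T_2$ and not some other $L^2$-extension.
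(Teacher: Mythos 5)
Your argument is correct, and it is essentially the route the paper intends: the paper states the proposition without a detailed proof, simply as a corollary of the Riesz--Thorin theorem, and your write-up fills in exactly the expected steps (identifying $T_{\infty}$ with the adjoint of $T_1$ under the $L^1$--$L^{\infty}$ duality via the self-adjointness of $T_2$, which gives the norm equality, then interpolating at $\theta=1/2$ to bound $\|T_2\|$). The points you flag as needing care -- matching the duality pairing with the $L^2$ inner product and checking that the interpolated operator agrees with $T_2$ on the dense subspace $L^{\infty}\subset L^2$ -- are handled correctly.
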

%\begin{proof} Let $p\in\{1;2;\infty\}$. To save notation, we write $L^p$ instead of $L^p(B,\nu)$. The Banach adjoint $T_1^'\in {\mathcal B}\bigl((L^1)^*\bigr)$ of 
%	$T_1\in {\mathcal B}(L^1)$ satisfies 
%	\[
%	\|T_1^'\|_{(L^1)^*\to (L^1)^*}=\|T_1\|_{L^1\to L^1},
%	\]
%\cite[VI.2, Theorem VI.2]{ReeSim}.
%The conjugate linear iso\-metry 
%\[
%	C: L^{\infty}\to (L^1)^{*},\quad f\mapsto\bigl(\phi\mapsto\int_B\phi(b)\overline{f(b)}{\rm d}\nu(b)\bigr)
%\]
%is onto	\cite[2.14 Theorem]{LieLos}.
%We claim that $T_1^'=CT_{\infty}C^{-1}$. To prove the claim it is enough to check that for any $f\in L^{\infty}$ the continuous functionals 
%$CT_{\infty}(f)$ and $T_1^'C(f)$ on $L^1$ coincide on the dense subspace $L^2\subset L^1$. For any $\phi\in L^2$ we have
%\begin{align*}
% 	CT_{\infty}(f)(\phi)&=\langle \phi,T_{\infty}f\rangle=\langle \phi,T_2f\rangle=\langle T_2\phi,f\rangle=\langle T_1\phi,f\rangle=T_1^'C(f)(\phi);
%\end{align*}
%this proves the claim. As a consequence  
%$$\|T_{\infty}\|_{L^{\infty}\to L^{\infty}}=\|T_1^'\|_{(L^1)^*\to (L^1)^*}=\|T_1\|_{L^1\to L^1}.$$
%Applying the  Riesz-Thorin theorem \cite[IX.4, Theorem IX.17]{ReeSimII} we deduce that
%\[
%	\|T_{2}\|_{L^2\to L^2}\leq\|T_{\infty}\|_{L^{\infty}\to L^{\infty}}.
%\]
%\end{proof}

Let $E$ be a Borel subset of $G$ of positive measure. Let $b\in B$. Recall that by definition
\[
	M_E^{\mathbbm 1_B}\mathbbm 1_B(b)=\frac{1}{\mathrm{vol}(E)}\int_E\frac{c(g,b)^{1/2}}{\Xi(g)}{\rm d}g.
\]
Let $e:E\to B$ be a Borel map and $f:B\to\mathbb C$  a bounded Borel map.
\begin{lemma}\label{lemma: averaging the cocycle at the point b} Assume $E$ is relatively compact and symmetric. If $G$ is unimodular then
	\[
		\|M_{(E,e)}^f\|_{L^2\to L^2}\leq\|f\|_{\infty}\|M_E^{\mathbbm 1_B}\mathbbm 1_B\|_{\infty}.lar
	\]
\end{lemma}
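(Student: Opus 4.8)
The plan is to apply Proposition~\ref{proposition: Riesz Thorin} to the ``scalar'' average $M_E^{\mathbbm 1_B}$ (obtained by specializing $f$ to $\mathbbm 1_B$), after a preliminary step of pointwise domination that removes the weight $f\circ e$.

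\emph{Step 1 (reduction to $f=\mathbbm 1_B$).} First I would record that, by the defining formula,
\[
	M_E^{\mathbbm 1_B}\varphi(b)=\frac{1}{\mathrm{vol}(E)}\int_E\varphi(g^{-1}b)\,c(g^{-1},b)^{1/2}\,\frac{{\rm d}g}{\Xi(g)},
\]
so that $M_E^{\mathbbm 1_B}$ is a \emph{positive} operator ($\varphi\geq 0\Rightarrow M_E^{\mathbbm 1_B}\varphi\geq 0$), and for an arbitrary $\varphi\in L^2(B,\nu)$ and $\nu$-a.e.\ $b$ one has the pointwise bound $|M_{(E,e)}^f\varphi(b)|\leq\|f\|_\infty\,M_E^{\mathbbm 1_B}(|\varphi|)(b)$. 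Taking $L^2$-norms gives $\|M_{(E,e)}^f\varphi\|_2\leq\|f\|_\infty\,\|M_E^{\mathbbm 1_B}\|_{L^2\to L^2}\,\|\varphi\|_2$, so it remains to prove $\|M_E^{\mathbbm 1_B}\|_{L^2\to L^2}\leq\|M_E^{\mathbbm 1_B}\mathbbm 1_B\|_\infty$ (in particular that the right-hand side is finite).

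\emph{Step 2 (interpolation).} Next I would verify the hypotheses of Proposition~\ref{proposition: Riesz Thorin} for $T_1:=M_E^{\mathbbm 1_B}$ on $L^1(B,\nu)$. Using the quasi-invariance identity and the cocycle relation $c(g^{-1},gb)=c(g,b)^{-1}$ one computes $\|\pi_\nu(g)\psi\|_1=\int_B|\psi|\,c(g,\cdot)^{1/2}\,{\rm d}\nu\leq\|\pi_\nu(g^{-1})\mathbbm 1_B\|_\infty\,\|\psi\|_1$; since $E=E^{-1}$ is relatively compact, the standing assumption bounds $\|\pi_\nu(g^{-1})\mathbbm 1_B\|_\infty$ uniformly over $g\in E$, and the continuous positive coefficient $\Xi$ is bounded below on the compact set $\overline E$, so $M_E^{\mathbbm 1_B}$ is bounded on $L^1$. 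It visibly preserves $L^2$ and, again by the standing assumption, $L^\infty$ (with $M_E^{\mathbbm 1_B}\mathbbm 1_B\in L^\infty$), and it is self-adjoint on $L^2$: its adjoint equals $\frac{1}{\mathrm{vol}(E)}\int_E\pi_\nu(g^{-1})\,\Xi(g)^{-1}\,{\rm d}g$, and the substitution $g\mapsto g^{-1}$ — legitimate because $G$ is unimodular, $E=E^{-1}$, and $\Xi(g^{-1})=\Xi(g)$ as $\Xi$ is a real, hence symmetric, coefficient of a unitary representation — returns $M_E^{\mathbbm 1_B}$. Proposition~\ref{proposition: Riesz Thorin} now yields $\|M_E^{\mathbbm 1_B}\|_{L^2\to L^2}\leq\|M_E^{\mathbbm 1_B}\|_{L^\infty\to L^\infty}$.

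\emph{Step 3 (conclusion).} Finally, because $M_E^{\mathbbm 1_B}$ is positive and $\nu$ is a probability measure, $|M_E^{\mathbbm 1_B}\psi|\leq\|\psi\|_\infty\,M_E^{\mathbbm 1_B}\mathbbm 1_B$ for all $\psi\in L^\infty$, hence $\|M_E^{\mathbbm 1_B}\|_{L^\infty\to L^\infty}=\|M_E^{\mathbbm 1_B}\mathbbm 1_B\|_\infty$; chaining the three steps gives the claim. The hard part here is not conceptual but bookkeeping: the two places where the hypotheses genuinely enter are the $L^1$-boundedness of $M_E^{\mathbbm 1_B}$ (relative compactness of $E$ together with the standing bound on the cocycle, plus positivity of $\Xi$ on $\overline E$) and the self-adjointness on $L^2$ (symmetry of $E$ together with unimodularity of $G$); once these are secured, the rest is a formal consequence of positivity and of the Riesz--Thorin interpolation already packaged in Proposition~\ref{proposition: Riesz Thorin}.
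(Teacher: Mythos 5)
Your proof is correct and follows essentially the same route as the paper's: reduce to $f=\mathbbm 1_B$ by pointwise domination, establish self-adjointness of $M_E^{\mathbbm 1_B}$ from $E=E^{-1}$ and unimodularity, invoke Proposition~\ref{proposition: Riesz Thorin}, and use positivity to identify $\|M_E^{\mathbbm 1_B}\|_{L^\infty\to L^\infty}$ with $\|M_E^{\mathbbm 1_B}\mathbbm 1_B\|_\infty$. You merely spell out the $L^p$-boundedness and adjoint computations that the paper leaves as ``easy to check''.
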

\begin{proof} To save notation we write $M_E$ instead of $M_E^{\mathbbm 1_B}$. As $E$ is relatively compact in $G$ locally compact second countable and as we always assume that for any relatively compact open set $U\in G$
\[
	\sup_{g\in U}\|\frac{{\rm d}g_*\nu}{{\rm d}\nu}\|_{L^{\infty}(B,\nu)}<\infty,
\]	
it is easy to check that for $p\in\{1;2;\infty\}$
\[
	M_E=\frac{1}{\mathrm{vol}(E)}\int_E\frac{\pi_{\nu}(g)}{\Xi(g)}{\rm d}g\in {\mathcal B}\bigl(L^p(B,\nu)\bigr).
\]
Obviously, 
$\|M_{(E,e)}^f\|_{L^p\to L^p}\leq \|f\|_{\infty}\|M_E\|_{L^p\to L^p}$.
As $E=E^{-1}$ and as $G$ is unimodular we have $M_E=M_E^*$. According to  Proposition \ref{proposition: Riesz Thorin}
\[
	\|M_E\|_{L^2\to L^2}\leq\|M_E\|_{L^{\infty}\to L^{\infty}}=\|M_E\mathbbm 1_B\|_{\infty}.
\]
\end{proof}

\subsection{Stable functions on locally compact groups}\label{subsection: stable functions on locally compact groups}

\begin{defi} A function $f:G\to\mathbb C$ on a locally compact group is \emph{left-stable} if there exist a constant $C\geq 1$ and a neighborhood 
$V$ of the identity $e$ in $G$	such that for all $g\in G$ and  for all $v\in V$
\[
	|f(g)|/C\leq|f(vg)|\leq C|f(g)|.
\]
We write $f(g)\asymp f(vg)$  when we don't want to emphasize the actual value of the constant $C$.
\end{defi}
\begin{prop}\label{proposition: stable} Assume that the cocycle $c:G\times B\to[0,\infty[$ associated to an action of a locally compact group $G$ on a compact probability
	space $(B,\nu)$ is continuous. Let $\pi_{\nu}$ be the corresponding unitary representation. Then for any $b\in B$ the function
	$g\mapsto c(g,b)$ is left-stable. The generalized Harish-Chandra function $\Xi(g)=\langle\pi_{\nu}(g)\mathbbm 1_B,\mathbbm 1_B\rangle$ is left and right-stable.
\end{prop}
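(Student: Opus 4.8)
The plan is to reduce everything to a single uniform estimate of the cocycle near the identity and then exploit the cocycle identity $c(g_1g_2,b)=c(g_1,g_2b)\,c(g_2,b)$ (the chain rule for Radon--Nikodym derivatives) together with the positivity of $\pi_\nu$. First note that $c(e,\cdot)\equiv 1$ on $B$. Since $c$ is continuous on $G\times B$ and $B$ is compact, a compactness (tube lemma) argument produces, for every $\varepsilon>0$, a neighborhood of $e$ on which $|c(v,b)-1|\le\varepsilon$ uniformly in $b\in B$: for each $b$ choose a product neighborhood $W_b\times U_b$ of $(e,b)$ on which $|c-1|\le\varepsilon$, extract a finite subcover $U_{b_1},\dots,U_{b_k}$ of $B$, and intersect the $W_{b_i}$. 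Taking $\varepsilon=1/2$ and then passing to the symmetric neighborhood $V$ obtained by intersecting with its inverse, we obtain a constant $C\ge1$ (one may take $C=2$) and a symmetric neighborhood $V$ of $e$ with
\[
 1/C\le c(v,b)\le C\qquad\text{for all }v\in V,\ b\in B.
\]

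Left-stability of $g\mapsto c(g,b)$ is then immediate: for $v\in V$ and $g\in G$ the cocycle identity gives $c(vg,b)=c(v,gb)\,c(g,b)$, and since $gb\in B$ the displayed bound yields $c(g,b)/C\le c(vg,b)\le C\,c(g,b)$, which is correct even at points where $c(g,b)=0$.

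For $\Xi$ I use unitarity of $\pi_\nu$ together with the fact that each $\pi_\nu(g)$ preserves nonnegative functions, since $(\pi_\nu(g)\varphi)(b)=\varphi(g^{-1}b)\,c(g^{-1},b)^{1/2}$. From $(\pi_\nu(v^{-1})\mathbbm 1_B)(b)=c(v,b)^{1/2}$ and the displayed bound we get $C^{-1/2}\mathbbm 1_B\le\pi_\nu(v^{-1})\mathbbm 1_B\le C^{1/2}\mathbbm 1_B$ pointwise for $v\in V$; since also $\pi_\nu(g)\mathbbm 1_B=c(g^{-1},\cdot)^{1/2}\ge 0$, the identity
\[
 \Xi(vg)=\langle\pi_\nu(vg)\mathbbm 1_B,\mathbbm 1_B\rangle=\langle\pi_\nu(g)\mathbbm 1_B,\pi_\nu(v^{-1})\mathbbm 1_B\rangle
\]
gives $C^{-1/2}\Xi(g)\le\Xi(vg)\le C^{1/2}\Xi(g)$, i.e. $\Xi$ is left-stable. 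As $\Xi$ is real and positive, $\Xi(g^{-1})=\overline{\langle\pi_\nu(g)\mathbbm 1_B,\mathbbm 1_B\rangle}=\Xi(g)$, so for $v\in V$ (symmetric) $\Xi(gv)=\Xi(v^{-1}g^{-1})\asymp\Xi(g^{-1})=\Xi(g)$ by left-stability; hence $\Xi$ is also right-stable. (Alternatively $\Xi(gv)=\langle\pi_\nu(g)\pi_\nu(v)\mathbbm 1_B,\mathbbm 1_B\rangle$ with $C^{-1/2}\mathbbm 1_B\le\pi_\nu(v)\mathbbm 1_B\le C^{1/2}\mathbbm 1_B$ and $\pi_\nu(g)$ positivity-preserving gives the same bound.)

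The only substantial step is the uniform estimate of the first paragraph; once compactness of $B$ upgrades the continuity of $c$ along $\{e\}\times B$ to an estimate uniform in $b$, everything else is a formal manipulation of the cocycle relation and of the order-preserving character of the $\pi_\nu(g)$. Note that unimodularity of $G$ is not used here.
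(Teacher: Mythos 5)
Your proof is correct and follows essentially the same route as the paper: a compactness argument on $V\times B$ (the paper via extreme values of $c$, you via the tube lemma at $\{e\}\times B$) gives the uniform two-sided bound on $c(v,\cdot)$, the cocycle identity then yields left-stability of $g\mapsto c(g,b)$, and integrating that bound together with the symmetry $\Xi(g)=\Xi(g^{-1})$ gives left- and then right-stability of $\Xi$. Your phrasing of the $\Xi$ estimate through unitarity and positivity of $\pi_{\nu}$ is only a cosmetic repackaging of the paper's direct estimate of $\int_B c(vg,b)^{1/2}\,{\rm d}\nu(b)$.
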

\begin{proof} 
	Let $V$ be a symmetric compact neighborhood of $e$ in $G$. Since $c$ is continuous, $c|_{V\times B}$  reaches its maximum $M$ and its minimum $m$ on the compact set $V\times B$.
	Let $v\in V$. The cocycle identity 
		\[
			1=c(v^{-1}v,b)=c(v^{-1},vb)c(v,b)
		\]
and the symmetry of $V$ imply that $Mm=1$.
Since $c(vg,b)=c(v,gb)c(g,b)$ we obtain for all $g\in G$ and all $v\in V$
\[
	\frac{c(g,b)}{\|c|_{V\times B}\|_{\infty}}\leq c(vg,b)\leq\|c|_{V\times B}\|_{\infty}c(g,b).
\]
We have
\begin{align*}
	\Xi(vg)&=\Xi((vg)^{-1})=\int_Bc(vg,b)^{1/2}{\rm d}\nu(b)\asymp \int_Bc(g,b)^{1/2}{\rm d}\nu(b)\\
	       &=\Xi(g^{-1})=\Xi(g),
\end{align*}
and hence
\[
	\Xi(gv)=\Xi(v^{-1}g^{-1})\asymp\Xi(g^{-1})=\Xi(g).
\]		
\end{proof}

\begin{lemma}\label{lemma: from discrete to continuous} Assume the cocycle $c:G\times B\to[0,\infty[$ associated to an action of a locally compact group $G$ on a compact probability
	space $(B,\nu)$ is continuous. Let $\pi_{\nu}$ be the corresponding unitary representation. Let $\Gamma$ be a discrete subgroup of $G$. There exist  a relatively compact neighborhood $U$ of $e$ in $G$ and a constant $C>0$ such that for any 
non-empty finite subset $\Lambda$ of $\Gamma$ and for any $b\in B$
\[
	\sum_{\gamma\in\Lambda}\frac{\pi_{\nu}(\gamma)\mathbbm 1_B(b)}{\Xi(\gamma)}\leq C \int_{\Lambda U}\frac{\pi_{\nu}(g)\mathbbm 1_B(b)}{\Xi(g)}{\rm d}g.
\]
\end{lemma}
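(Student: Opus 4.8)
The plan is to exploit the discreteness of $\Gamma$ in $G$ together with the left- and right-stability of the functions $c(\cdot,b)$ and $\Xi$ established in Proposition~\ref{proposition: stable}. First I would choose a relatively compact open symmetric neighborhood $U$ of $e$ in $G$ small enough that the translates $\gamma U$, $\gamma\in\Gamma$, have bounded overlap: since $\Gamma$ is discrete there is such a $U$ with $\gamma_1 U\cap\gamma_2 U=\emptyset$ whenever $\gamma_1\neq\gamma_2$ (shrink $U$ so that $U U^{-1}$ meets $\Gamma$ only in $e$). Then for a non-empty finite $\Lambda\subset\Gamma$ the set $\Lambda U=\bigsqcup_{\gamma\in\Lambda}\gamma U$ is a disjoint union, so
\[
	\int_{\Lambda U}\frac{\pi_{\nu}(g)\mathbbm 1_B(b)}{\Xi(g)}\,{\rm d}g
	=\sum_{\gamma\in\Lambda}\int_{\gamma U}\frac{c(g^{-1},b)^{1/2}}{\Xi(g)}\,{\rm d}g
	=\sum_{\gamma\in\Lambda}\int_{U}\frac{c((\gamma v)^{-1},b)^{1/2}}{\Xi(\gamma v)}\,{\rm d}v,
\]
using left-invariance of the Haar measure for the substitution $g=\gamma v$. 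It therefore suffices to bound each integrand from below by a fixed multiple of $\pi_{\nu}(\gamma)\mathbbm 1_B(b)/\Xi(\gamma)=c(\gamma^{-1},b)^{1/2}/\Xi(\gamma)$, uniformly in $\gamma\in\Gamma$, $v\in U$ and $b\in B$.

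For the numerator, note $c((\gamma v)^{-1},b)=c(v^{-1}\gamma^{-1},b)$, and applying the cocycle identity $c(v^{-1}\gamma^{-1},b)=c(v^{-1},\gamma^{-1}b)\,c(\gamma^{-1},b)$ we get, since $v^{-1}\in U$ (symmetric) lies in a fixed compact neighborhood of $e$ and $c$ is continuous on the compact set $\overline{U}\times B$,
\[
	c((\gamma v)^{-1},b)\ \ge\ \Bigl(\min_{\overline{U}\times B}c\Bigr)\,c(\gamma^{-1},b),
\]
with the minimum a strictly positive constant (as in the proof of Proposition~\ref{proposition: stable}, where $Mm=1$ and $m>0$). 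For the denominator, right-stability of $\Xi$ gives a constant $C_1\ge 1$ and a neighborhood $V$ of $e$ — shrink $U$ further so $U\subset V$ — with $\Xi(\gamma v)\le C_1\,\Xi(\gamma)$ for all $v\in U$. Combining the two estimates, for every $\gamma\in\Gamma$, $v\in U$, $b\in B$,
\[
	\frac{c((\gamma v)^{-1},b)^{1/2}}{\Xi(\gamma v)}
	\ \ge\ \frac{(\min_{\overline{U}\times B}c)^{1/2}}{C_1}\cdot\frac{c(\gamma^{-1},b)^{1/2}}{\Xi(\gamma)}.
\]
Integrating over $v\in U$ yields $\int_{\gamma U}\frac{\pi_{\nu}(g)\mathbbm 1_B(b)}{\Xi(g)}{\rm d}g\ge \mathrm{vol}(U)\cdot\frac{(\min c)^{1/2}}{C_1}\cdot\frac{\pi_{\nu}(\gamma)\mathbbm 1_B(b)}{\Xi(\gamma)}$, and summing over $\gamma\in\Lambda$ gives the claim with $C=C_1\,\mathrm{vol}(U)^{-1}(\min_{\overline{U}\times B}c)^{-1/2}$.

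The only genuinely delicate point is arranging a single neighborhood $U$ that simultaneously does three jobs: making the translates $\gamma U$ pairwise disjoint (discreteness of $\Gamma$), lying inside the right-stability neighborhood $V$ of $\Xi$, and being relatively compact so that $\min_{\overline{U}\times B}c>0$ (continuity of $c$ and compactness of $B$). Since each condition is satisfied by all sufficiently small $U$, one simply intersects: pick a relatively compact symmetric open $U_0$ with $U_0 U_0^{-1}\cap\Gamma=\{e\}$, then replace it by $U_0\cap V$. All the remaining steps are the routine Haar-measure change of variables and the two pointwise inequalities above, so I do not expect any further obstruction.
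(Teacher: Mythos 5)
Your proof is correct and follows essentially the same route as the paper: stability of the cocycle and of $\Xi$ (via the cocycle identity and continuity on a compact set) to compare the value at $\gamma$ with values on $\gamma U$, plus discreteness of $\Gamma$ to pass from the sum to the integral over $\Lambda U$. Your only deviation is a slight improvement in precision: you require $UU^{-1}\cap\Gamma=\{e\}$ so that the translates $\gamma U$ are genuinely pairwise disjoint, whereas the paper only states $U\cap\Gamma=\{e\}$, which is what the summation step actually needs.
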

\begin{proof} According to Proposition \ref{proposition: stable} the cocycle is left-stable.
Hence, as $g\mapsto g^{-1}$ is a homeomorphism fixing $e\in G$, there exists a constant $C'>0$ and a relatively compact neighborhood $U$ of $e$ in $G$ such that for all $u\in U$, all $g\in G$ and  all $b\in B$
	\[
		\pi_{\nu}(g)\mathbbm 1_B(b)=c(g^{-1},b)^{1/2}\leq C'c(u^{-1}g^{-1},b)^{1/2}=C'\pi_{\nu}(gu)\mathbbm 1_B(b).
	\]
According to the same proposition, the Harish-Chandra function is right-stable. Hence we may assume that for all $u\in U$ and all $g\in G$
\[
	\Xi(g)>\Xi(gu)/C'.
\]
It follows	that for any $\gamma\in\Gamma$
\[
\frac{\pi_{\nu}(\gamma)\mathbbm 1_B(b)}{\Xi(\gamma)}=\frac{1}{\mathrm{vol}(U)}\int_U\frac{\pi_{\nu}(\gamma)\mathbbm 1_B(b)}{\Xi(g)}\mathrm{d}u
\leq\frac{(C')^2}{\mathrm{vol}(U)} \int_U\frac{\pi_{\nu}(\gamma u)\mathbbm 1_B(b)}{\Xi(\gamma u)}\mathrm{d} u.
\]	
As $\Gamma$ is discrete we may assume that $U$ is small enough so that $U\cap\Gamma=\{e\}$. Putting $C=\frac{(C')^2}{\mathrm{vol}(U)}$ we obtain
\[
	\sum_{\gamma\in \Lambda}\frac{\pi_{\nu}(\gamma)\mathbbm 1_B(b)}{\Xi(\gamma)}\leq C \int_{\Lambda U}\frac{\pi_{\nu}(g)\mathbbm 1_B(b)}{\Xi(g)}{\rm d}g.
\]
\end{proof}

\subsection{General facts about metric Borel spaces}

Let $(B,d)$ be a metric space and $E\subset B$  a subset. For  $r>0$ we denote
\[
	E(r)=\{b\in B : d(b,E)<r\}
\] 
 the open $r$-neighborhood of $E$ in $B$.
Recall that the boundary $\partial E$ of a subset $E$ in a topological space $B$ is the intersection of its closure with the closure of its complement
\[
	\partial E=\overline{E}\cap\overline{B\setminus E}
\]
and that it is Borel in the case $E$ is Borel. For the sake of clarity we recall in the following proposition two facts (standard and easy to prove) we will use about metric Borel spaces.

\begin{prop}\label{proposition: measure} Let $(B,\nu)$ be a Borel space with a probability measure. We assume the topology inducing the Borel structure is defined by a distance $d$.
	Let $E\subset B$ be a Borel set. 
	\begin{enumerate}
		%\item If $\nu(\partial E)=0$ then $\lim_{r\to 0}\nu(E(r))=\nu(E)$.
		\item If $\nu(\partial E)=0$ then 
		$\lim_{r\to 0}\nu(E(r))=\nu(E)$.
		\item The set $\{r>0: \nu(\partial E(r))\neq 0\}$ is at most countable.
	\end{enumerate}
\end{prop}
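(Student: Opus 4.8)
The final statement to prove is Proposition~\ref{proposition: measure}, which asserts two standard facts about a probability measure $\nu$ on a metric space $(B,d)$: (1) if $\nu(\partial E)=0$ then $\lim_{r\to 0}\nu(E(r))=\nu(E)$, and (2) the set of radii $r$ with $\nu(\partial E(r))\neq 0$ is at most countable.

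\medskip

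\textbf{Proof proposal.}

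For part (1), the plan is to express $\nu(E(r))$ as a limit and use monotone/dominated convergence. The open $r$-neighborhoods $E(r)$ are nested increasingly as $r$ grows, but as $r\to 0$ they decrease, and their intersection is exactly $\overline{E}$: indeed $\bigcap_{r>0}E(r)=\{b : d(b,E)=0\}=\overline{E}$. Since $\nu$ is a finite measure and the sets $E(r)$ decrease to $\overline{E}$ as $r\downarrow 0$ (it suffices to take a sequence $r_n\downarrow 0$), continuity from above gives $\lim_{r\to 0}\nu(E(r))=\nu(\overline{E})$. Now $\overline{E}=E\cup(\partial E\setminus E)$ up to the points of $\partial E$ lying outside $E$, so $\nu(\overline{E})\le \nu(E)+\nu(\partial E)=\nu(E)$ when $\nu(\partial E)=0$; and trivially $\nu(\overline{E})\ge\nu(E)$ since $E\subset\overline{E}$ (here I use that $E$ is Borel so everything is measurable, and $\partial E$ is Borel as noted in the excerpt). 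Hence $\nu(\overline{E})=\nu(E)$, which together with the limit identity proves (1). One small point to check: $E$ need not itself be open, but $d(\cdot,E)$ is continuous regardless, so each $E(r)$ is genuinely open and Borel.

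For part (2), the plan is the classical argument that a monotone family of measurable sets can have only countably many ``jumps'' of positive size. First observe that $r\mapsto \nu(E(r))$ is non-decreasing on $(0,\infty)$ and bounded by $1$. The key geometric observation is that for $0<r<s$ the boundary $\partial E(r)$ is contained in the closed shell $\{b : r\le d(b,E)\le s\}$, hence $\partial E(r)$ and $\partial E(s)$ are disjoint whenever $r\ne s$ — more precisely, $\partial E(r)\subset\{d(\cdot,E)=r\}$ because $E(r)$ is the preimage of $[0,r)$ under the continuous function $d(\cdot,E)$, so $\partial E(r)$ lies in the level set where $d(\cdot,E)=r$. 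These level sets are pairwise disjoint for distinct $r$. Therefore $\sum_{r\in F}\nu(\partial E(r))\le\nu(B)=1$ for any finite set $F$ of radii, which forces $\{r : \nu(\partial E(r))\ge 1/k\}$ to be finite for each $k\in\mathbb N$, and taking the union over $k$ shows $\{r>0 : \nu(\partial E(r))>0\}$ is at most countable.

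\medskip

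The arguments are entirely routine; there is no serious obstacle. The only place requiring the slightest care is the identification of the relevant small sets: in part (1) that $\bigcap_{r>0}E(r)=\overline E$ (which is immediate from the definition of $d(\cdot,E)$), and in part (2) that $\partial E(r)$ is contained in the level set $\{d(\cdot,E)=r\}$ (which follows because $E(r)=(d(\cdot,E))^{-1}([0,r))$ is open, its complement is $(d(\cdot,E))^{-1}([r,\infty))$, and the boundary — contained in the closure of each — must satisfy $d(\cdot,E)\ge r$ and $d(\cdot,E)\le r$ simultaneously by continuity). Everything else is finiteness of $\nu$ plus continuity from above and a pigeonhole count, so the proposition follows exactly as stated.
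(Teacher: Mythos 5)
Your proof is correct: part (1) follows from $\bigcap_{r>0}E(r)=\overline{E}$, continuity from above for the probability measure $\nu$, and the inclusion $\overline{E}\setminus E\subset\partial E$, while part (2) follows from $\partial E(r)\subset\{b\in B: d(b,E)=r\}$ together with the pigeonhole count over the pairwise disjoint level sets. The paper gives no proof of Proposition~\ref{proposition: measure} (it records the two facts as standard and easy), and your argument is precisely the standard one it relies on.
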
	
%\begin{proof} The first statement is a consequence of the second (with $U=B$) so we prove the second statement.
%	We have 
%	\begin{align*}
%	\nu(U\cap E)&\leq\lim_{r\to 0}\nu(U\cap E(r))=\lim_{n\to \infty}\nu(U\cap E(1/n))\\
%	            &=\nu\left(\bigcap_{n\in\mathbb N}U\cap E(1/n)\right)\\
%	            &=\nu(U\cap\overline{E})=\nu\left(U\cap\left[\overset{\circ}{E}\cup\partial E\right]\right)\\
%				&\leq \nu(U\cap E) + \nu(\partial E)\\
%				&=\nu(U\cap E). 	
%	\end{align*}
%	To prove the third statement we notice that for any $r>0$
%	\[
%		\partial E(r)\subset\{b\in B : d(b,E)=r\}.
%	\]
%	Hence the boundaries of the open neighborhoods $\partial E(r)$ are disjoint for different values of $r$.
%	This implies, because  the measure of $B$ is finite, that for any $n\in\mathbb N$ the set 
%	\[
%		\{r : \nu(\partial E(r))>1/n\}
%	\]
%	is finite. Hence
%	\[
%		\{r>0: \nu(\partial E(r))\neq 0\}=\bigcup_{n\in\mathbb N}\{r : \nu(\partial E(r))>1/n\}
%	\]
%	is at most countable. 
%\end{proof}	
\subsection{Bounded operators preserving positive functions}
Let $(B,\nu)$ be a Borel space with a probability measure. We assume the topology inducing the Borel structure is defined by a distance $d$. Suppose for each Borel bounded function $f$ on $B$ we are given a bounded operator
\[
	M^f\in {\mathcal B}\bigl(L^2(B,\nu)\bigr),
\]
and the following properties hold:
\begin{enumerate}
	\item $M^{af+bg}=aM^f+bM^g\quad \forall a,b\in\mathbb C\ \forall f,g \mbox{ Borel bounded}$,
	\item $f\geq 0 \ \Rightarrow\  M^f\varphi\geq 0\,\, \forall \varphi\in L^2(B,\nu)\mbox{ with }  \varphi\geq 0$,
	\item $\langle M^{\mathbbm 1_B}\mathbbm 1_B,\mathbbm 1_B\rangle=1$.  
\end{enumerate}
The following lemma is an abstract version of \cite[Proposition 5.5]{BadMuc}. The adaptation of the proof from \cite{BadMuc} is not difficult and is left to the reader.
\begin{lemma}\label{lemma: boundary of measure zero} Let $(B,\nu)$ and $d$ be as above. Let $M_n$ be a sequence of linear transformations from the space of bounded Borel functions on $B$ to 
${\mathcal B}\bigl(L^2(B,\nu)\bigr)$ as above. Assume the following two conditions hold:
\begin{enumerate}
	\item for all Borel subsets $U,V\subset B$ with $\nu(\partial U)=\nu(\partial V)=0$
	\[
		\limsup_{n\to\infty}\langle M_n^{\mathbbm 1_U}\mathbbm 1_V,\mathbbm 1_B\rangle\leq\nu(U)\nu(V),
	\]
	\item for all Borel subsets $U,W\subset B$ such that $d(U,W)>0$
	\[
		\lim_{n\to\infty}\langle M_n^{\mathbbm 1_U}\mathbbm 1_B,\mathbbm 1_W\rangle=0.
	\]
\end{enumerate}
Then for all Borel subsets $U,V,W\subset B$ with $\nu(\partial U)=\nu(\partial V)=\nu(\partial W)=0$ we have
	\[
		\lim_{n\to\infty}\langle M_n^{\mathbbm 1_U}\mathbbm 1_V,\mathbbm 1_W\rangle=\nu(U\cap W)\nu(V).
	\]	
\end{lemma}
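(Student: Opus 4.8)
The plan is to deduce the exact limit from the two one-sided/coarse hypotheses by a sandwiching argument, reducing everything to sets whose boundaries are $\nu$-null and which are either well separated or nested with a small error. First I would reduce the claim about a triple $U,V,W$ to the case where $U$ and $W$ are disjoint \emph{and} well separated, i.e. $d(U,W)>0$, plus a "diagonal" contribution coming from $U\cap W$. Concretely, fix $\varepsilon>0$. Using Proposition \ref{proposition: measure}(1) choose $r>0$ small so that $\nu(U(r))\le\nu(U)+\varepsilon$, $\nu(W(r))\le\nu(W)+\varepsilon$, and so that (again by part (1), applied to $B\setminus U$, $B\setminus W$ and to $U\cap W$) the slightly shrunken sets are close to $U$, $W$, $U\cap W$; by Proposition \ref{proposition: measure}(2) we may also choose this common $r$ so that $\nu(\partial U(r))=\nu(\partial W(r))=0$ and similarly for the shrunk versions. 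Writing $\mathbbm 1_U=\mathbbm 1_{U\cap W(r)}+\mathbbm 1_{U\setminus W(r)}$ and using linearity (property (1) of the operators $M_n$) splits $\langle M_n^{\mathbbm 1_U}\mathbbm 1_V,\mathbbm 1_W\rangle$ into a piece supported near $W$ and a piece that is at positive distance from $W$ up to an $\varepsilon$-controlled error.

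For the piece at positive distance, hypothesis (2) gives $\langle M_n^{\mathbbm 1_{U\setminus W(r)}}\mathbbm 1_B,\mathbbm 1_W\rangle\to 0$; then I would use positivity (property (2)) together with $0\le\mathbbm 1_V\le\mathbbm 1_B$ to dominate $\langle M_n^{\mathbbm 1_{U\setminus W(r)}}\mathbbm 1_V,\mathbbm 1_W\rangle$ in absolute value by that vanishing quantity (after the usual decomposition of $\mathbbm 1_V$ and $\mathbbm 1_W$ into their positive parts, which here is trivial since they are already nonnegative), so this piece contributes $0$ in the limit. For the piece near $W$, I would bound $\mathbbm 1_{U\cap W(r)}\le \mathbbm 1_{W(r)}$ and $\mathbbm 1_V\le \mathbbm 1_B$ from above, and $\mathbbm 1_{U\cap W(r)}\ge \mathbbm 1_{(U\cap W)(-r)}$-type lower bounds from below (a shrunken version of $U\cap W$), and squeeze $\langle M_n^{\mathbbm 1_{U\cap W(r)}}\mathbbm 1_V,\mathbbm 1_W\rangle$ between expressions of the form $\langle M_n^{\mathbbm 1_{\text{big}}}\mathbbm 1_V,\mathbbm 1_B\rangle$ and $\langle M_n^{\mathbbm 1_{\text{small}}}\mathbbm 1_V,\mathbbm 1_W\rangle$; hypothesis (1) controls the upper limit of the former by $\nu(\text{big})\nu(V)$, and — applying hypothesis (1) to the complementary configuration and using the normalization (property (3)) to convert an upper bound into a matching lower bound — one pins the limit of the latter. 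Letting $\varepsilon\to 0$ and invoking $\nu(\partial U)=\nu(\partial W)=0$ collapses all the neighborhoods back to $U$, $W$, $U\cap W$, yielding $\lim_n\langle M_n^{\mathbbm 1_U}\mathbbm 1_V,\mathbbm 1_W\rangle=\nu(U\cap W)\nu(V)$.

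The main obstacle I anticipate is the \emph{lower bound}: hypotheses (1) and (2) are both upper/vanishing statements, so the only way to produce a lower bound of the form $\langle M_n^{\mathbbm 1_U}\mathbbm 1_V,\mathbbm 1_W\rangle\ge \nu(U\cap W)\nu(V)-o(1)$ is to write $1=\langle M_n^{\mathbbm 1_B}\mathbbm 1_B,\mathbbm 1_B\rangle$ via property (3), decompose $\mathbbm 1_B$ as a sum of characteristic functions of boundary-null pieces in all three slots, apply the $\limsup$ bound (1) to every piece \emph{except} the one we care about, and subtract; this forces careful bookkeeping so that the sum of the competing $\nu(\cdot)\nu(\cdot)$ terms is exactly $1-\nu(U\cap W)\nu(V)$, which is where the factorized form $\nu(U)\nu(V)$ of the bound in (1) is essential. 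Making the "all boundaries simultaneously $\nu$-null" choices consistent across the finitely many auxiliary sets is a routine but slightly delicate application of Proposition \ref{proposition: measure}(2). This is exactly the structure of \cite[Proposition 5.5]{BadMuc}, so I would follow that argument line by line, replacing the geometric input there by the abstract hypotheses (1)–(3) here.
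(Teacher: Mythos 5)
Your route is the right one, and it is the one the paper itself takes: the paper does not write the argument out but defers to \cite[Proposition 5.5]{BadMuc}, and your near/far splitting with respect to $W(r)$, the use of positivity to kill the far piece via hypothesis (2), and the normalization-plus-complementation trick for the lower bound are exactly the ingredients of that proof. Two steps, however, would fail as literally written. For the piece near $W$ you propose to enlarge the \emph{first} slot, $\mathbbm 1_{U\cap W(r)}\le \mathbbm 1_{W(r)}$; if you do that, hypothesis (1) only yields $\limsup_n\langle M_n^{\mathbbm 1_{W(r)}}\mathbbm 1_V,\mathbbm 1_B\rangle\le\nu(W(r))\nu(V)$, which tends to $\nu(W)\nu(V)$ and loses all information about $U$, so the sandwich cannot close on $\nu(U\cap W)\nu(V)$. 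The correct move is to relax only the \emph{third} slot: by positivity, $\langle M_n^{\mathbbm 1_{U\cap W(r)}}\mathbbm 1_V,\mathbbm 1_W\rangle\le\langle M_n^{\mathbbm 1_{U\cap W(r)}}\mathbbm 1_V,\mathbbm 1_B\rangle$, and hypothesis (1), applied after choosing $r$ with $\nu(\partial W(r))=0$ (Proposition \ref{proposition: measure}), gives the bound $\nu(U\cap W(r))\nu(V)$, which tends to $\nu(U\cap W)\nu(V)$ as $r\to 0$. The auxiliary lower bound by a shrunken version of $U\cap W$ inside the diagonal piece leads nowhere: neither hypothesis produces a lower bound for an individual term, so drop it.

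The second problem is in your lower-bound bookkeeping. After writing $1=\langle M_n^{\mathbbm 1_B}\mathbbm 1_B,\mathbbm 1_B\rangle$ and decomposing in all three slots over $\{U,U^c\}\times\{V,V^c\}\times\{W,W^c\}$, you say you would apply ``the limsup bound (1)'' to the seven other pieces; but hypothesis (1) requires $\mathbbm 1_B$ in the third slot, and those pieces have $W$ or $W^c$ there. What must be applied to them is the three-slot \emph{upper} bound $\limsup_n\langle M_n^{\mathbbm 1_A}\mathbbm 1_C,\mathbbm 1_D\rangle\le\nu(A\cap D)\nu(C)$ that the first half of your plan establishes for all null-boundary triples (note $\partial U^c=\partial U$, etc.); these bounds sum to $1-\nu(U\cap W)\nu(V)$, which gives the desired liminf. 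Equivalently, and slightly cleaner: first prove the two-slot limit $\lim_n\langle M_n^{\mathbbm 1_U}\mathbbm 1_V,\mathbbm 1_B\rangle=\nu(U)\nu(V)$ by complementing only the first two slots (there hypothesis (1) does apply to all four terms), then write $\langle M_n^{\mathbbm 1_U}\mathbbm 1_V,\mathbbm 1_W\rangle=\langle M_n^{\mathbbm 1_U}\mathbbm 1_V,\mathbbm 1_B\rangle-\langle M_n^{\mathbbm 1_U}\mathbbm 1_V,\mathbbm 1_{W^c}\rangle$ and use the upper bound with $W^c$ in place of $W$. With these two corrections your plan is complete and coincides with the argument the paper refers to.
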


\subsection{Dirac type peaks and counting}
Recall that a \emph{length function} $L:G\to[0,\infty[$ on a locally compact group $G$ is a locally bounded proper Borel map such that
for all $g,h\in G$ we have  $L(gh)\leq L(g)+L(h)$, $L(g^{-1})=L(g)$ and $L(e)=0$ where $e\in G$ denotes the neutral element.
The following lemma is a generalization of \cite[Proposition 5.1]{BadMuc}. We give the proof for the convenience of the reader.

\begin{lemma}\label{lemma: Dirac} Let $(B,\nu)$ be a Borel probability space with a distance $d$ inducing the topo\-lo\-gy of $B$.
Let $\pi_{\nu}:G\to \mathcal{U}\bigl(L^2(B,\nu)\bigr)$ be a quasi-regular representation. Let $L$ be a length function on $G$.	
For each $n\in\mathbb N$ let $\mu_n\in L^1(G,{\rm d}g)$ such that $\|\mu_n\|_1\leq 1$, $\mu_n(g)\geq 0$  a.e. and
\[
	\lim_{n\to\infty}\mu_n(g)=0.
\]
Let  $\mathrm{supp}(\mu_n)$ denote the (essential) support of $\mu_n$.
Let $b_n:\mathrm{supp}(\mu_n)\to B$ be a Borel map.
Assume that for each $r>0$ there exists a function $h_r:[0,\infty[\to[0,\infty[$ with the following properties:
\begin{enumerate}
	\item $h_r$ is non-increasing,
	\item $\lim_{s\to\infty}h_r(s)=0,$
	\item $\forall n\in\mathbb N\ \forall g\in \mathrm{supp}(\mu_n)$
	\[
	\frac{\langle\pi_{\nu}(g)\mathbbm 1_B,\mathbbm 1_{\{b\in B : d(b,b_n(g))\geq r \}}\rangle}{\Xi(g)}\leq h_r(L(g)).
	\]
\end{enumerate}
Then for any Borel subset $W$ in $B$ and any $r>0$
\begin{align*}
	\limsup_{n\to\infty}\int_G\mu_n(g)\frac{\langle\pi_{\nu}(g)\mathbbm 1_B,\mathbbm 1_W\rangle}{\Xi(g)}{\rm d}g\leq
	\limsup_{n\to\infty}\int_{\mathrm{supp}(\mu_n)}\mu_n(g)\mathbbm 1_{W(r)}(b_n(g)){\rm d}g.
\end{align*}
\end{lemma}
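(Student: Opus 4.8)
The plan is to dominate the integrand pointwise on $\mathrm{supp}(\mu_n)$ by $\mathbbm 1_{W(r)}(b_n(g))$ up to an error term that becomes negligible once integrated against $\mu_n$. Fix a Borel subset $W\subset B$ and $r>0$. The crucial elementary observation is that $(\pi_\nu(g)\mathbbm 1_B)(b)=c(g^{-1},b)^{1/2}\geq 0$, so for fixed $g$ the functional $\varphi\mapsto\langle\pi_\nu(g)\mathbbm 1_B,\varphi\rangle$ is non-negative and monotone on non-negative functions. I would then distinguish two cases for $g\in\mathrm{supp}(\mu_n)$. If $b_n(g)\in W(r)$, then $\mathbbm 1_W\leq\mathbbm 1_B$ gives $\langle\pi_\nu(g)\mathbbm 1_B,\mathbbm 1_W\rangle\leq\langle\pi_\nu(g)\mathbbm 1_B,\mathbbm 1_B\rangle=\Xi(g)$, hence the ratio is at most $1=\mathbbm 1_{W(r)}(b_n(g))$. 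If instead $b_n(g)\notin W(r)$, that is $d(b_n(g),W)\geq r$, then every $w\in W$ satisfies $d(w,b_n(g))\geq r$, so $\mathbbm 1_W\leq\mathbbm 1_{\{b\in B\,:\,d(b,b_n(g))\geq r\}}$; monotonicity together with hypothesis $(3)$ of the lemma then gives $\langle\pi_\nu(g)\mathbbm 1_B,\mathbbm 1_W\rangle/\Xi(g)\leq h_r(L(g))$. In either case,
\[
\frac{\langle\pi_\nu(g)\mathbbm 1_B,\mathbbm 1_W\rangle}{\Xi(g)}\ \leq\ \mathbbm 1_{W(r)}(b_n(g))+h_r(L(g))\qquad\text{for all }g\in\mathrm{supp}(\mu_n).
\]

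Next I would multiply by $\mu_n(g)\geq 0$ and integrate over $G$; since the left-hand side is supported in $\mathrm{supp}(\mu_n)$ and every term is non-negative, this yields
\[
\int_G\mu_n(g)\,\frac{\langle\pi_\nu(g)\mathbbm 1_B,\mathbbm 1_W\rangle}{\Xi(g)}\,{\rm d}g\ \leq\ \int_{\mathrm{supp}(\mu_n)}\mu_n(g)\,\mathbbm 1_{W(r)}(b_n(g))\,{\rm d}g+\int_G\mu_n(g)\,h_r(L(g))\,{\rm d}g.
\]
Taking $\limsup_{n\to\infty}$ and using subadditivity of $\limsup$, the assertion of the lemma follows as soon as one knows $\int_G\mu_n(g)\,h_r(L(g))\,{\rm d}g\to 0$.

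Establishing this last limit is the only genuine step, and it is where the hypotheses $\|\mu_n\|_1\leq 1$, $\lim_n\mu_n=0$, and the properness of $L$ enter. Given $\varepsilon>0$, I would choose (using $h_r(s)\to 0$) a threshold $S>0$ with $h_r(S)<\varepsilon$ and set $K=\{g\in G:L(g)\leq S\}$, which is relatively compact since $L$ is a length function, hence of finite Haar volume. Splitting the integral along $\{L\leq S\}$ and $\{L>S\}$ and using that $h_r$ is non-increasing together with $\|\mu_n\|_1\leq 1$,
\[
\int_G\mu_n(g)\,h_r(L(g))\,{\rm d}g\ \leq\ h_r(0)\int_K\mu_n(g)\,{\rm d}g+h_r(S)\ \leq\ h_r(0)\,\mathrm{vol}(K)\,\|\mu_n\|_\infty+\varepsilon,
\]
and since $\mathrm{vol}(K)<\infty$ while $\|\mu_n\|_\infty\to 0$, passing to $\limsup_n$ gives $\limsup_n\int_G\mu_n h_r(L)\leq\varepsilon$; as $\varepsilon>0$ is arbitrary the integral tends to $0$. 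I expect the only real, and still mild, obstacle to be precisely this last point: the Fatou-type estimate $h_r(L(g))$ carries no information on the compact region $\{L\leq S\}$, so that region must be controlled separately by exploiting the flatness of $\mu_n$ and then split off from the region at infinity where $h_r(L(g))$ is itself small.
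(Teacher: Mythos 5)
Your argument is essentially the paper's: the same dichotomy according to whether $b_n(g)$ lies in $W(r)$ (positivity of $\pi_{\nu}(g)\mathbbm 1_B$, the trivial bound by $\Xi(g)$ in one case, hypothesis (3) together with the monotonicity of $h_r$ in the other), followed by splitting the resulting error term at a length threshold; the paper merely organizes this as a decomposition of $\mathrm{supp}(\mu_n)$ into the three sets $\{L<s\}$, $\{L\geq s,\ b_n(g)\in W(r)\}$ and $\{L\geq s,\ b_n(g)\notin W(r)\}$, takes $\limsup_n$ for fixed $s$ and then lets $s\to\infty$, which is only a cosmetic reshuffling of your pointwise inequality plus $\varepsilon$-argument. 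The one point to flag is your final appeal to $\|\mu_n\|_\infty\to 0$: the lemma as stated assumes only that $\mu_n(g)\to 0$ pointwise (and $\mu_n\in L^1$ need not even be essentially bounded), so this step uses more than the written hypotheses. The paper instead disposes of the bounded region $\{L<s\}$ by dominated convergence on a set of finite Haar measure --- an appeal which, taken literally, likewise needs more than pointwise convergence --- and in every application of the lemma one has $\mu_n\leq 1/\mathrm{vol}(E_n)$ with $\mathrm{vol}(E_n)\to\infty$, so $\|\mu_n\|_\infty\to 0$ does hold there; your version is therefore harmless in context, but for the lemma exactly as stated you should either justify $\int_{\{L\leq S\}}\mu_n\to 0$ from the stated (pointwise) hypothesis with some domination, or record that the uniform smallness of $\mu_n$ is what is actually being used.
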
	
\begin{proof} Let $W\subset B$ be a Borel subset and let $r>0$. For each $n\in\mathbb N$ and each $s>0$ we decompose the support of $\mu_n$ as
\begin{align*}
      \mathrm{supp}(\mu_n)&=\{g\in \mathrm{supp}(\mu_n) : L(g)<s\}\\
	             &\bigcup\{g\in \mathrm{supp}(\mu_n) : L(g)\geq s,\ b_n(g)\in W(r)\}\\
				 &\bigcup\{g\in \mathrm{supp}(\mu_n) : L(g)\geq s,\ b_n(g)\notin W(r)\}.	
\end{align*}
If $g$ is such that $b_n(g)\notin W(r)$ then obviously $\mathbbm 1_W\leq\mathbbm 1_{\{b\in B : d(b,b_n(g))\geq r \}}$. Hence we have
\begin{align*}
     \int_G\mu_n(g)\frac{\langle \pi_{\nu}(g)\mathbbm 1_B,\mathbbm 1_W\rangle}{\Xi(g)}{\rm d}g&=
     \end{align*}
     \begin{align*}
	          & =\int_{\{g\in \mathrm{supp}(\mu_n) : L(g)<s\}}\mu_n(g)\frac{\langle\pi_{\nu}(g)\mathbbm 1_B,\mathbbm 1_W\rangle}{\Xi(g)}{\rm d}g \\
	          &\qquad +\int_{\{g\in \mathrm{supp}(\mu_n) : L(g)\geq s,\, b_n(g)\in W(r)\}}\mu_n(g)\frac{\langle\pi_{\nu}(g)\mathbbm 1_B,\mathbbm 1_W\rangle}{\Xi(g)}{\rm d}g\\
		&\qquad+\int_{\{g\in \mathrm{supp}(\mu_n) : L(g)\geq s,\, b_n(g)\notin W(r)\}}\mu_n(g)\frac{\langle\pi_{\nu}(g)\mathbbm 1_B,\mathbbm 1_W\rangle}{\Xi(g)}{\rm d}g\\
		 &\leq \int_{\{g\in \mathrm{supp}(\mu_n) : L(g)<s\}}\mu_n(g){\rm d}g\\
	          &\qquad+\int_{\{g\in \mathrm{supp}(\mu_n) : L(g)\geq s,\, b_n(g)\in W(r)\}}\mu_n(g){\rm d}g\\
	&\qquad+\int_{\{g\in \mathrm{supp}(\mu_n) : L(g)\geq s,\, b_n(g)\notin W(r)\}}\mu_n(g)h_r(s){\rm d}g.\\	
\end{align*}
As 	$\{g\in G: L(g)<s\}$ is relatively compact it has finite Haar measure, and Lebesgue's dominated convergence theorem implies
\[
	\lim_{n\to\infty}\int_{\{g\in G: L(g)<s\}}\mu_n(g){\rm d}g=\int_{\{g\in G: L(g)<s\}}\lim_{n\to\infty}\mu_n(g){\rm d}g=0.
\]
Hence, taking the limit superior in the above inequality we obtain
\begin{align*}
     &\limsup_{n\to\infty}\int_G\mu_n(g)\frac{\langle\pi_{\nu}(g)\mathbbm 1_B,\mathbbm 1_W\rangle}{\Xi(g)}{\rm d}g\\
	 &\leq\limsup_{n\to\infty}\int_{\mathrm{supp}(\mu_n)}\mu_n(g)\mathbbm 1_{W(r)}(b_n(g)){\rm d}g\\
	 &+h_r(s).	
\end{align*}
As the above inequality is true for any $s>0$ and as $\lim_{s\to\infty}h_r(s)=0$ the lemma is proved.

\end{proof}

\begin{prop}\label{proposition: away}(Away from the peak of the square root of the Poisson kernel.) Let $\pi_{\nu}:G\to \mathcal{U}\bigl(L^2(B,\nu)\bigr)$ be a quasi-regular representation, and let $L$ be a length function on $G$.
	For each $n$  let $E_n\subset G$ be a  relatively compact Borel subset and let $e_n:E_n\to B$ be a  Borel map. Assume
	\[
		\lim_{n\to\infty}\mathrm{vol}(E_n)=\infty.
	\]
	Assume that for each $r>0$ there exists a function $h_r:[0,\infty[\to[0,\infty[$ with the following properties:
	\begin{enumerate}
		\item $h_r$ is non-increasing,
		\item $\lim_{s\to\infty}h_r(s)=0,$
		\item $\forall n\in\mathbb N\ \forall g\in E_n$
		\[
		\frac{\langle\pi_{\nu}(g)\mathbbm 1_B,\mathbbm 1_{\{b\in B : d(b,e_n(g))\geq r \}}\rangle}{\Xi(g)}\leq h_r(L(g)).
		\]		
	\end{enumerate}
	Then for any Borel subset $U\subset B$ the operators
	\[
		M_n^{\mathbbm 1_U}=M_{(E_n,e_n)}^{\mathbbm 1_U}=\frac{1}{\mathrm{vol}(E_n)}\int_{E_n}\mathbbm 1_U(e_n(g))\frac{\pi_{\nu}(g)}{\Xi(g)}{\rm d}g
	\]
	have the following properties:
	\begin{enumerate}
		\item if $U,W\subset B$ are Borel subsets such that $d(U,W)>0$ then
		\[
			\lim_{n\to\infty}\langle M_n^{\mathbbm 1_U}\mathbbm 1_B,\mathbbm 1_W\rangle=0,
		\]
		\item if $G$ is unimodular and  $E_n=E_n^{-1}$ (i.e. stable under taking inverses in $G$) then for all Borel subsets
		$U,V\subset B$ and all $r>0$
		\begin{align*}
			&\limsup_{n\to\infty}\langle M_n^{\mathbbm 1_U}\mathbbm 1_V,\mathbbm 1_B\rangle\\
			&\leq\limsup_{n\to\infty}\frac{1}{\mathrm{vol}(E_n)}\int_{E_n}\mathbbm 1_U(e_n(g^{-1}))\mathbbm 1_{V(r)}(e_n(g)){\rm d}g.
		\end{align*}
	\end{enumerate}
\end{prop}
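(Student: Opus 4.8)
The plan is to derive both assertions from the Dirac-type estimate of Lemma~\ref{lemma: Dirac} by feeding it the right density $\mu_n$, together with an elementary symmetrization step for the second assertion.

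For part~(1) I would first record that $M_n^{\mathbbm 1_U}\mathbbm 1_B$ is a non-negative function: by definition $(\pi_\nu(g)\mathbbm 1_B)(b)=c(g^{-1},b)^{1/2}\ge 0$, and all the weights occurring in the defining integral of $M_n^{\mathbbm 1_U}\mathbbm 1_B$ are non-negative, so $\langle M_n^{\mathbbm 1_U}\mathbbm 1_B,\mathbbm 1_W\rangle\ge 0$ and it suffices to bound its limit superior from above by $0$. Then I would apply Lemma~\ref{lemma: Dirac} with
\[
\mu_n(g)=\frac{\mathbbm 1_{E_n}(g)\,\mathbbm 1_U(e_n(g))}{\mathrm{vol}(E_n)}
\]
and with $b_n$ the restriction of $e_n$ to $\mathrm{supp}(\mu_n)\subset E_n$: one checks $\mu_n\ge 0$, $\|\mu_n\|_1\le 1$ (an average over $E_n$ of a $\{0,1\}$-valued function), $0\le\mu_n\le 1/\mathrm{vol}(E_n)\to 0$, and hypothesis~(3) of the present statement is precisely hypothesis~(3) of the lemma, which is only required on $\mathrm{supp}(\mu_n)\subset E_n$. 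The conclusion of the lemma reads, for every $r>0$,
\[
\limsup_{n\to\infty}\langle M_n^{\mathbbm 1_U}\mathbbm 1_B,\mathbbm 1_W\rangle\le\limsup_{n\to\infty}\frac{1}{\mathrm{vol}(E_n)}\int_{E_n}\mathbbm 1_U(e_n(g))\,\mathbbm 1_{W(r)}(e_n(g))\,{\rm d}g ;
\]
since $d(U,W)>0$ I would then choose $0<r<d(U,W)$, which makes $U\cap W(r)=\emptyset$, so the right-hand side is $0$, and together with $\langle M_n^{\mathbbm 1_U}\mathbbm 1_B,\mathbbm 1_W\rangle\ge 0$ this yields $\lim_{n\to\infty}\langle M_n^{\mathbbm 1_U}\mathbbm 1_B,\mathbbm 1_W\rangle=0$.

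For part~(2) the additional ingredient is to transfer the constant function out of the right slot of the coefficient. Since $\pi_\nu$ is unitary, $\pi_\nu(g)^*=\pi_\nu(g^{-1})$, and since $\mathbbm 1_B$ and $\mathbbm 1_V$ are non-negative the coefficients involved are real, whence $\langle\pi_\nu(g)\mathbbm 1_V,\mathbbm 1_B\rangle=\langle\pi_\nu(g^{-1})\mathbbm 1_B,\mathbbm 1_V\rangle$ and $\Xi(g^{-1})=\Xi(g)$. Substituting $g\mapsto g^{-1}$ and using that $G$ is unimodular and $E_n=E_n^{-1}$, I would rewrite
\[
\langle M_n^{\mathbbm 1_U}\mathbbm 1_V,\mathbbm 1_B\rangle=\frac{1}{\mathrm{vol}(E_n)}\int_{E_n}\mathbbm 1_U\bigl(e_n(g^{-1})\bigr)\,\frac{\langle\pi_\nu(g)\mathbbm 1_B,\mathbbm 1_V\rangle}{\Xi(g)}\,{\rm d}g ,
\]
and then apply Lemma~\ref{lemma: Dirac} with $\mu_n(g)=\mathbbm 1_{E_n}(g)\,\mathbbm 1_U(e_n(g^{-1}))/\mathrm{vol}(E_n)$, with $b_n=e_n|_{\mathrm{supp}(\mu_n)}$, and with $W$ replaced by $V$; the hypotheses are verified exactly as in part~(1), and the conclusion is word for word the asserted inequality.

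The points requiring care are minor bookkeeping: the Borel measurability of $g\mapsto\mathbbm 1_{E_n}(g)\,\mathbbm 1_U(e_n(g^{\pm1}))$, the inclusion $\mathrm{supp}(\mu_n)\subset E_n$ up to a null set, and the elementary identities $\Xi(g^{-1})=\Xi(g)$ and $\langle\pi_\nu(g)\mathbbm 1_V,\mathbbm 1_B\rangle=\langle\pi_\nu(g^{-1})\mathbbm 1_B,\mathbbm 1_V\rangle$. The only genuinely delicate point is the change of variables in part~(2): it rests \emph{simultaneously} on the unimodularity of $G$, the symmetry $E_n=E_n^{-1}$, and the symmetry of the Harish-Chandra function, and it is exactly this manipulation that breaks down without $E_n=E_n^{-1}$, which is why that hypothesis is imposed only in the second part of the statement.
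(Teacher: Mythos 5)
Your proposal is correct and follows essentially the same route as the paper: part (1) is the paper's argument verbatim (the density $\mu_n(g)=\mathbbm 1_{E_n}(g)\mathbbm 1_U(e_n(g))/\mathrm{vol}(E_n)$ fed into Lemma~\ref{lemma: Dirac}, then $0<r<d(U,W)$ kills the right-hand side), and part (2) performs the same inversion $g\mapsto g^{-1}$ using unimodularity, $E_n=E_n^{-1}$ and $\Xi(g^{-1})=\Xi(g)$, merely at the level of matrix coefficients where the paper phrases it as a computation of the adjoint operator $(M_{(E_n,e_n)}^{\mathbbm 1_U})^*$. The only (harmless) difference is that you make the positivity of $\langle M_n^{\mathbbm 1_U}\mathbbm 1_B,\mathbbm 1_W\rangle$ explicit to pass from the limsup bound to the limit, which the paper leaves implicit.
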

\begin{proof} To show the first claim we 
 suppose $d(U,V)>0$. We choose $0<r<d(U,V)$ and define $\mu_n(g)=0$ if $g\notin E_n$, and
	$$\mu_n(g)=\frac{1}{\mathrm{vol}(E_n)}\mathbbm 1_{E_n}(g)\mathbbm 1_U(e_n(g))$$
if $g\in E_n$.	
Applying Lemma \ref{lemma: Dirac} we obtain
\begin{align*}
     \limsup_{n\to\infty}\langle M_{(E_n,e_n)}^{\mathbbm 1_U}\mathbbm 1_B,\mathbbm 1_W\rangle &=\limsup_{n\to\infty}\int_G\mu_n(g)\frac{\langle\pi_{\nu}(g)\mathbbm 1_B,\mathbbm 1_W\rangle}{\Xi(g)}{\rm d}g\\
	                                                 &\leq\limsup_{n\to\infty}\int_{\mathrm{supp}(\mu_n)}\mu_n(g)\mathbbm 1_{W(r)}(e_n(g)){\rm d}g\\
													 &=\limsup_{n\to\infty}\frac{1}{\mathrm{vol}(E_n)}\int_{E_n}\mathbbm 1_U(e_n(g))\mathbbm 1_{W(r)}(e_n(g)){\rm d}g.
\end{align*}
but $\mathbbm 1_U(e_n(g))\mathbbm 1_{W(r)}(e_n(g))=0$ for all $g\in E_n$ because $U\cap W(r)=\emptyset$.	

For the second claim we assume that  $G$ is unimoduar and that $E_n=E_n^{-1}$. The adjoint of $M_{(E_n,e_n)}^{\mathbbm 1_U}$ is
\begin{align*}
	(M_{(E_n,e_n)}^{\mathbbm 1_U})^*&=\frac{1}{\mathrm{vol}(E_n)}\int_{E_n}\mathbbm 1_U(e_n(g))\frac{\pi_{\nu}^*(g)}{\Xi(g)}{\rm d}g\\
	                       &=\frac{1}{\mathrm{vol}(E_n)}\int_{E_n}\mathbbm 1_U(e_n(g))\frac{\pi_{\nu}(g^{-1})}{\Xi(g)}{\rm d}g\\
						   &=\frac{1}{\mathrm{vol}(E_n)}\int_{E_n^{-1}}\mathbbm 1_U(e_n(g^{-1}))\frac{\pi_{\nu}(g)}{\Xi(g^{-1})}{\rm d}g^{-1}\\
						   &=\frac{1}{\mathrm{vol}(E_n)}\int_{E_n}\mathbbm 1_U(e_n(g^{-1}))\frac{\pi_{\nu}(g)}{\Xi(g)}{\rm d}g.
\end{align*}
We define $\mu_n(g)=0$ if $g\notin E_n$ and
	$$\mu_n(g)=\frac{1}{\mathrm{vol}(E_n)}\mathbbm 1_{E_n}(g)\mathbbm 1_U(e_n(g^{-1}))$$
if $g\in E_n$.	
Applying Lemma \ref{lemma: Dirac} we obtain for any Borel subset $V\subset B$ and any $r>0$
\begin{align*}
     \limsup_{n\to\infty}\langle M_{(E_n,e_n)}^{\mathbbm 1_U}\mathbbm 1_V,\mathbbm 1_B\rangle&=\limsup_{n\to\infty}\langle(M_{(E_n,e_n)}^{\mathbbm 1_U})^*\mathbbm 1_B,\mathbbm 1_V\rangle\\
	                                                 &=\limsup_{n\to\infty}\int_G\mu_n(g)\frac{\langle\pi_{\nu}(g)\mathbbm 1_B,\mathbbm 1_V\rangle}{\Xi(g)}{\rm d}g\\
	                                                 &\leq\limsup_{n\to\infty}\int_{\mathrm{supp}(\mu_n)}\mu_n(g)\mathbbm 1_{V(r)}(e_n(g)){\rm d}g\\
													 &=\limsup_{n\to\infty}\frac{1}{\mathrm{vol}(E_n)}\int_{E_n}\mathbbm 1_U(e_n(g^{-1}))\mathbbm 1_{V(r)}(e_n(g)){\rm d}g.
\end{align*}
\end{proof}

\subsection{Proof of Theorem \ref{theorem: ergodicity of some quasi-regular representations}}
\begin{proof} In order to prove the first part of the theorem, namely
\[
	\lim_{n\to\infty}\langle M_{(E_n,e_n)}^{\mathbbm 1_U}\mathbbm 1_V,\mathbbm 1_W\rangle=\nu(U\cap W)\nu(V)
\]	
for Borel sets with zero measure boundary, it is enough to check that the hypotheses of Lemma~\ref{lemma: boundary of measure zero} are satisfied.
	The operator $M_{(E_n,e_n)}^{f}$ is obviously positive if $f$ is, and $\langle M_{(E_n,e_n)}^{\mathbbm 1_B}\mathbbm 1_B,\mathbbm 1_B\rangle=1$.
	
	It follows from Proposition \ref{proposition: away} that  the second assumption in Lemma~\ref{lemma: boundary of measure zero} is satisfied, namely for all Borel subsets $U,W\subset B$ with  $d(U,W)>0$ 
		\[
			\lim_{n\to\infty}\langle M_{(E_n,e_n)}^{\mathbbm 1_U}\mathbbm 1_B,\mathbbm 1_W\rangle=0.
		\]
	
	Let us check that the first assumption in Lemma  \ref{lemma: boundary of measure zero} is satisfied, namely that for all Borel subsets $U,V\subset B$ with $\nu(\partial U)=\nu(\partial V)=0$
	\[
		\limsup_{n\to\infty}\langle M_{(E_n,e_n)}^{\mathbbm 1_U}\mathbbm 1_V,\mathbbm 1_B\rangle\leq\nu(U)\nu(V).
	\]
	From Proposition \ref{proposition: away} we deduce that for all $r>0$
	\begin{align*}
		&\limsup_{n\to\infty}\langle M_{(E_n,e_n)}^{\mathbbm 1_U}\mathbbm 1_V,\mathbbm 1_B\rangle\\
		&\quad \leq\limsup_{n\to\infty}\frac{1}{\mathrm{vol}(E_n)}\int_{E_n}\mathbbm 1_U(e_n(g^{-1}))\mathbbm 1_{V(r)}(e_n(g)){\rm d}g.
	\end{align*}
	Hence, according to Proposition  \ref{proposition: measure}, for any $\epsilon>0$ we may choose $r$ such that $0<r<\epsilon$ and such that   
	$\nu(\partial(V(r)))=0$. Applying the above inequality with the chosen $r$ and the second assumption of the theorem  we obtain
	\[
		\limsup_{n\to\infty}\langle M_{(E_n,e_n)}^{\mathbbm 1_U}\mathbbm 1_V,\mathbbm 1_B\rangle\leq\nu(U)\nu(V(r)).
	\]
	As $r$ may be chosen arbitrarily small in the above inequality and as $\nu(\partial V)=0$ we deduce that
	\[
		\limsup_{n\to\infty}\langle M_{(E_n,e_n)}^{\mathbbm 1_U}\mathbbm 1_V,\mathbbm 1_B\rangle\leq\nu(U)\nu(V),
	\] 
	since according to Proposition \ref{proposition: measure} $\lim_{r\to 0}\nu(V(r))=\nu(V)$. This terminates the proof of the first part of the theorem.
	
To prove the second statement in the theorem, we notice first that linearity and  the first part of the theorem imply  that for any $h\in H$ and for all Borel sets $V,W$ in $B$ with $\nu(\partial V)=\nu(\partial W)=0$  we have
\begin{equation}\label{equation : characteristic functions}
	\lim_{n\to\infty}\langle M_{(E_n,e_n)}^{h}\mathbbm 1_V,\mathbbm 1_W\rangle=\langle\mathbbm 1_V,\mathbbm 1_B\rangle\langle h,\mathbbm 1_W\rangle.
\end{equation}
Applying Lemma \ref{lemma: averaging the cocycle at the point b} it is easy to  deduce that 
(\ref{equation : characteristic functions}) also holds for any $h\in \overline H$.
%\begin{align*}
%& |\langle M_{(E_n,e_n)}^{f}\mathbbm 1_V,\mathbbm 1_W\rangle	-\langle\mathbbm 1_V,\mathbbm 1_B\rangle\langle f,\mathbbm 1_W\rangle| \\
% & \quad \leq  |\langle M_{(E_n,e_n)}^{h}\mathbbm 1_V,\mathbbm 1_W\rangle-\langle\mathbbm 1_V,\mathbbm 1_B\rangle\langle h,\mathbbm 1_W\rangle|\\
% &\qquad +|\langle M_{(E_n,e_n)}^{f-h}\mathbbm 1_V,\mathbbm 1_W\rangle|+|\langle\mathbbm 1_V,\mathbbm 1_B\rangle\langle f-h,\mathbbm 1_W\rangle|\\
%&\quad \leq |\langle M_{(E_n,e_n)}^{h}\mathbbm 1_V,\mathbbm 1_W\rangle-\langle\mathbbm 1_V,\mathbbm 1_B\rangle\langle h,\mathbbm 1_W\rangle|\\
%&\qquad + \|f-h\|_{\infty}\left(\sup_n \|M_{E_n}^{\mathbbm 1_B}\mathbbm 1_B\|_{\infty}+1\right).
% \end{align*}
%The first additive term in  the above upper bound goes to $0$ as $n$
%goes to infinity, the second additive term can be made arbitrarily small because $\sup_n \|M_{E_n}^{\mathbbm 1_B}\mathbbm 1_B\|_{\infty}$ is finite by hypothesis and $f$ is in the closure of $H$. We conclude that (\ref{equation : characteristic functions}) holds for any $h\in \overline H$.

To finish the proof of the theorem we check that Lemma~\ref{lemma: straightforward} applies. 

The density hypothesis is satisfied: the topology on $B$ is induced by a metric  hence the probability Borel measure $\nu$ is regular, it implies that the smallest subspace of $L^2(B,\nu)$ 
containing the characteristic functions of Borel subsets of $B$ whose boundaries have zero measure is dense. 
%(Indeed, if 
%$E\subset B$ is Borel and $\epsilon>0$ is given, the regularity of $\nu$ says that there exist a compact set $K$ and an open set $U$ such that $K\subset E\subset U$ with $\nu(U\setminus E)<\epsilon/2$ and $\nu(E\setminus K)<\epsilon/2$. The compact set being included in the open set $U$ there exists $r_0>0$ small enough such that $K(r_0)\subset U$. According to (3) of Proposition \ref{proposition: measure} above  the set 
%\[
%	\{r>0 : \nu(K(r))>0\}
%\]
%is at most countable. Let $0<r<r_0$ such that $\nu(K(r))=0$. We have 
%$K(r)\setminus E\subset U\setminus E$ and $E\setminus K(r)\subset E\setminus K$ hence
%\[
%	\|\mathbbm 1_E-\mathbbm 1_{K(r)}\|_2< \epsilon.) 
%\]

The uniform boundedness hypothesis is satisfied thanks to Lemma \ref{lemma: averaging the cocycle at the point b}
\end{proof}

\section{Furstenberg-Poisson boundaries}
Let $G$ be a non-compact semisimple connected Lie group with finite center. We use the notation introduced in Subsection \ref{subsection: statement of the main results}.  

\subsection{Cones around the barycenter of a Weyl chamber}
Let 
\[
	\rho=\frac{1}{2}\sum_{\alpha\in\Sigma^+}\dim\frak g_{\alpha}\cdot\alpha
\]
be half the sum of the positive roots counted with multiplicity.
The maximum of $\rho$ on $\frak a^+\cap \overline{\frak a_1}$ is reached at a unique vector  $H_{max}$ of norm $1$ called the barycenter
of $\frak a^+$.
Let $\theta>0$. We define the open cone in $\frak a^+$ of angle $2\theta$ around $H_{max}$ as
\[
	\frak a^{\theta}=\{H\in\frak a^+: \measuredangle (H, H_{max})<\theta\},
\]
we denote the intersection (truncation) of this cone with the ball of radius $T$ and center the origin in $\frak a$ as
\[
	\frak a_T^{\theta}=\frak a^{\theta}\cap \frak a_T
\]
and  its image in $G$ under the exponential map as
\[
	A_T^{\theta}=\exp(\frak a_T^{\theta}).
\]
Eventually, we define 
\[
	B_T^{\theta}=KA_T^{\theta}K.
\]
\subsection{Volume estimates}
We normalize the  Lebesgue measure ${\rm d}H$ on $\frak{a}$ and the Haar measure ${\rm d}g$ on $G$ so that
for any continuous function $f$ on $G$ with compact support  we have
\[
\int_G f(g){\rm d}g=\int_K \int_{\frak{a}^+} \int_K f(k\exp(H)l)J(H) {\rm d}k {\rm d}H {\rm d}l,
\]
where \[ J(H) =\prod_{\alpha\in\Sigma^+}(\sinh\alpha(H))^{\dim\frak{g}_\alpha}\quad\text{for }\ H\in\frak{a}^+.
\]
Let $E$ be a Borel subset of $G$. We denotes its volume as
\[
	\mathrm{vol}(E)=\int_G \mathbbm 1_E(g){\rm d}g.
\] 
In particular, for any measurable subset $L$ of $\frak a^+$ we have, for $D=\exp L$,
\[ 
\mathrm{vol}\left(K D K\right)= \int_{L}J(H){\rm d}H. 
\]
The growth rate $\delta$ of the symmetric space $(X,d_X)$ is given by the formula
\[
	\delta=2\rho(H_{max}).
\] 
(See \cite[5. Volume estimates]{GorOh}.)
It is sometimes convenient to normalize the metrics $d_G$ and $d_X$  defined in Subsection \ref{subsection: statement of the main results}. In the case the real rank 
$$r=\mathrm{rank}_{\mathbb R}G=\dim A$$
of $G$ is one, the normalization condition
\[
	d_X(\exp(H)x_0,x_0)=\beta(H)\quad \forall H\in\frak a^+,
\]
where $\beta$ is the unique positive indivisible root, leads to  a definition of $\delta$ in terms of the root system only: 
\[
	2\rho=\delta\beta.
\]
The values of $\delta$  are $n-1$ for $\mathrm{SO}(n,1)$ and $2n$ for $\mathrm{SU}(n,1)$. See \cite[2.8 Lemma]{Sha}. In the case $X=\mathbb H^n$ is the hyperbolic $n$-space, the normalized metric $d_{\mathbb H^n}$ has constant curvature equal to $-1$.

Lemma 5.4 in \cite{GorOh} states that for the sets $A^\theta_T$ we have 
\[ 
\mathrm{vol}\left(K A^\theta_T K\right)\sim c T^{(r-1)/2}\cdot e^{\delta T}
\]
with a constant $c>0$  \emph{independent} of the choice of $\theta$.  As usual, the notation  $f\sim g$  for two  functions $f,g:[0,\infty[\to\mathbb{R}$ means that $\lim_{T\to\infty}f(T)/g(T)=1$.
For $$G_T=K\overline{A^+_T}K$$ we also have
\[ 
\mathrm{vol}\left(G_T\right)\sim c T^{(r-1)/2}\cdot e^{\delta T}
\]
with the same constant $c$ as above.  

\subsection{Ergodicity relative to cones}

If $\psi\in L^{\infty}(G/P,\nu)$, we denote $m(\psi)\in {\mathcal B}\bigl(L^{2}(G/P,\nu)\bigr)$ the corresponding multiplication operator.
Let $P_{\mathbbm 1_{G/P}}\in {\mathcal B}\bigl(L^{2}(G/P,\nu)\bigr)$ be the orthogonal projection onto the complex line spanned by $\mathbbm 1_{G/P}$.
Let $\Gamma$ be a discrete subgroup of $G$,
\[
	\Gamma_T^{\theta}=B_T^{\theta}\cap\Gamma
\]
and let $|\Gamma_T^{\theta}|$ denote the cardinality of this finite set. 
Let $f:G/P\to\mathbb C$ be a continuous function. In the case  $\Gamma_T^{\theta}$ is non-empty we may consider 
the bounded operator
\[
	M_{\Gamma_T^{\theta}}^f=\frac{1}{|\Gamma_T^{\theta}|}\sum_{\gamma\in \Gamma_T^{\theta}}f(\bold {b}(\gamma))\frac{\lambda_{G/P}(\gamma)}{\Xi(\gamma)}.
\]  

\begin{theo}\label{theo: ergodicity relative to cones of the quasi-regular representation of a lattice in a semisimple Lie group}(Ergodicity  of the quasi-regular representation of a lattice in a semisimple Lie group, relative to cones.) Let $G$ be a non-compact connected semisimple Lie group with finite center. Let $P$ be a minimal parabolic subgroup of $G$ and  $\Gamma$  a lattice in $G$. Let $\theta>0$ and let $f$ be a continuous function on $G/P$. With the notation as above we have
    \[
		\lim_{T\to\infty}M_{\Gamma_T^{\theta}}^f=m(f)P_{\mathbbm 1_{G/P}}
	\]
in the weak operator topology of ${\mathcal B}\bigl(L^{2}(G/P,\nu)\bigr)$.	
That is, for any $\varphi,\psi\in L^{2}(G/P,\nu)$
	\[
		\lim_{T\to\infty}
		\frac{1}{|\Gamma_T^{\theta}|}\sum_{\gamma\in\Gamma_T^{\theta}}
		f(\bold {b}(\gamma))\frac{\langle\lambda_{G/P}(\gamma)\varphi,\psi\rangle}{\Xi(\gamma)}=
		\langle\varphi,\mathbbm 1_{G/P}\rangle\langle f,\psi\rangle.
	\]
\end{theo}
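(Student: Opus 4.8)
The plan is to deduce the statement from Theorem~\ref{theorem: ergodicity of some quasi-regular representations}, applied to the \emph{discrete} group $\Gamma$ — which is unimodular with its counting measure as Haar measure — acting on $B=G/P\cong K/M$. We equip $B$ with the $K$-invariant probability measure $\nu$ and with a $K$-invariant distance $d$ inducing its topology; the relevant representation is the restriction to $\Gamma$ of $\lambda_{G/P}$, whose Radon--Nikodym cocycle $c(g,b)$ is continuous on $G\times B$ with $B$ compact, so that the standing hypothesis $\sup_{g\in U}\|\pi_{\nu}(g)\mathbbm 1_B\|_{\infty}<\infty$ (automatic here, $U$ being finite) and the stability statements of Proposition~\ref{proposition: stable} are available. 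As length function on $\Gamma$ we take $L(\gamma)=d_X(\gamma x_0,x_0)$, so that $L(k_1\exp(H)k_2)=\|H\|$ by bi-$K$-invariance of $d_X$. Since weak-operator convergence as $T\to\infty$ is the same as convergence along every sequence $T_n\to\infty$, we fix such a sequence and apply Theorem~\ref{theorem: ergodicity of some quasi-regular representations} with $E_n=\Gamma_{T_n}^{\theta}=B_{T_n}^{\theta}\cap\Gamma$ and $e_n=\bold{b}|_{E_n}$. The first requirements are routine: $E_n=E_n^{-1}$ because the opposition involution $-w_0$ is a linear isometry preserving $\frak a^+$ which, permuting $\Sigma^+$ with multiplicities, fixes $\rho$ and hence the barycenter $H_{max}$, therefore preserves each $\frak a_T^{\theta}$, while the Cartan $A$-component of $g^{-1}$ is $\exp(-w_0H)$ when that of $g$ is $\exp(H)$; and $\mathrm{vol}(E_n)=|\Gamma_{T_n}^{\theta}|\to\infty$ by the lattice-point count $|\Gamma_T^{\theta}|\sim\mathrm{vol}(B_T^{\theta})/\mathrm{covol}(\Gamma)$ together with the volume asymptotics recalled above.

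It remains to check the two conditions of Theorem~\ref{theorem: ergodicity of some quasi-regular representations}. For the first, using that $\nu$ and $d$ are $K$-invariant and that $\lambda_{G/P}(k)\mathbbm 1_B=\mathbbm 1_B$ for $k\in K$, one rewrites, for $\gamma=k_1\exp(H)k_2\in\Gamma_T^{\theta}$,
\[
\frac{\langle\lambda_{G/P}(\gamma)\mathbbm 1_B,\mathbbm 1_{\{b:\,d(b,\bold{b}(\gamma))\ge r\}}\rangle}{\Xi(\gamma)}=\frac{\langle\lambda_{G/P}(\exp(H))\mathbbm 1_B,\mathbbm 1_{\{b:\,d(b,eM)\ge r\}}\rangle}{\Xi(\exp(H))},
\]
which is the mass that the probability measure on $K/M$ given by the normalized square root of the Poisson kernel at $\exp(H)$ assigns to the complement of the $r$-ball about the base point $eM$. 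The required uniform bound $\le h_r(\|H\|)$ over all $H\in\frak a^{\theta}$ — that is, the concentration of this mass near $eM$ as $\exp(H)\to\infty$ at a rate controlled by $\|H\|=L(\gamma)$ alone — is exactly the content of the Fatou-type theorems for the normalized square root of the Poisson kernel on symmetric spaces; see Subsection~\ref{subsection: Sjogren}. This is the main obstacle, and the reason the scheme is first run relative to cones: one takes $\theta$ small enough that $\overline{\frak a^{\theta}}\setminus\{0\}\subset\frak a^+$, so that $\min_{\alpha\in\Sigma^+}\alpha(H)\ge c\|H\|$ on $\frak a^{\theta}$ and $\exp(H)$ is uniformly regular, whereas near the walls of the chamber the Poisson mass concentrates on a positive-dimensional subvariety instead of a point and no such rate exists.

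The second condition, $\limsup_n\frac{1}{|\Gamma_{T_n}^{\theta}|}\sum_{\gamma\in\Gamma_{T_n}^{\theta}}\mathbbm 1_U(\bold{b}(\gamma^{-1}))\mathbbm 1_V(\bold{b}(\gamma))\le\nu(U)\nu(V)$ for Borel $U,V$ with $\nu(\partial U)=\nu(\partial V)=0$, is the joint equidistribution in $B\times B$ of the two Cartan $K$-components of lattice points — of which $\bold{b}(\gamma)$ and $\bold{b}(\gamma^{-1})$ are representatives, up to a fixed element of $K$ — a consequence of the wave-front lemma and the counting results of Gorodnik and Oh; see Subsection~\ref{subsection: counting lattice points and the wave-front lemma}. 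With these two conditions Theorem~\ref{theorem: ergodicity of some quasi-regular representations} yields $\lim_n\langle M_{(E_n,e_n)}^{\mathbbm 1_U}\mathbbm 1_V,\mathbbm 1_W\rangle=\nu(U\cap W)\nu(V)$ for all $U,V,W$ with null boundary. One still needs the uniform bound $\sup_n\|M_{E_n}^{\mathbbm 1_B}\mathbbm 1_B\|_{\infty}<\infty$, which by Lemma~\ref{lemma: averaging the cocycle at the point b} then bounds the operator norms as well. This is obtained in two steps: the continuous cocycle average $\frac{1}{\mathrm{vol}(B_T^{\theta})}\int_{B_T^{\theta}}\frac{c(g,b)^{1/2}}{\Xi(g)}\,{\rm d}g$ is computed to be identically $1$ (Proposition~\ref{proposition: K-invariance}, via bi-$K$-invariance and Harish-Chandra's formula), and then the discrete average over $\Gamma_{T_n}^{\theta}$ is dominated, through Lemma~\ref{lemma: from discrete to continuous} (which rests on the stability of $c(\cdot,b)$ and $\Xi$ from Proposition~\ref{proposition: stable}), by a constant multiple of such a continuous average over a slightly enlarged bi-$K$-invariant cone $B_{T_n+R}^{\theta'}$, the ratio $\mathrm{vol}(B_{T_n+R}^{\theta'})/|\Gamma_{T_n}^{\theta}|$ remaining bounded by the count and the $\theta$-independence of the leading volume term (Proposition~\ref{proposition: uniform boundedness}).

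Theorem~\ref{theorem: ergodicity of some quasi-regular representations} then gives ergodicity of $\lambda_{G/P}|_{\Gamma}$ relative to $(\Gamma_{T_n}^{\theta},\bold{b})$ and to every $f\in\overline H$; since any continuous $f$ on $G/P$ is a uniform limit of simple functions built from level sets $f^{-1}(I)$, all but countably many of which have $\nu$-null boundary, continuous functions lie in $\overline H$. As the sequence $T_n$ was arbitrary, this proves the theorem for all sufficiently small $\theta$. For arbitrary $\theta>0$ one invokes Proposition~\ref{prop: ergodicity relative to cones and subcones}: a small subcone $C_T=\Gamma_T^{\theta_0}$ sits inside $B_T=\Gamma_T^{\theta}$ with $\mathrm{vol}(B_T\setminus C_T)/\mathrm{vol}(B_T)\to0$, precisely because the leading term $cT^{(r-1)/2}e^{\delta T}$ of the volume of $KA_T^{\theta}K$ does not depend on $\theta$, while the operator averages are uniformly bounded by the previous step. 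To summarize, the hard part is the first condition — the uniform concentration of the normalized square root of the Poisson kernel along regular directions — with the uniform operator bound the other nontrivial ingredient, reached by the softer combination of the explicit cocycle computation and the discretization lemma.
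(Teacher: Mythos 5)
Your proposal is correct and follows essentially the same route as the paper: it verifies the hypotheses of Theorem~\ref{theorem: ergodicity of some quasi-regular representations} for the small-cone case using Lemma~\ref{lemma: upper bound on the square root of the Poisson kernel in symmetric spaces} (Sj\"ogren), Lemma~\ref{lemma: half Roblin} (Gorodnik--Oh counting), and the uniform bound from Propositions~\ref{proposition: K-invariance} and~\ref{proposition: uniform boundedness} via Lemma~\ref{lemma: from discrete to continuous}, and then passes to arbitrary $\theta$ through Propositions~\ref{proposition: counting lattice points in sectors} and~\ref{prop: ergodicity relative to cones and subcones}. The only deviation is in the uniform-boundedness step, where you dominate by an enlarged cone $B_{T+R}^{\theta'}$ instead of the full ball $G_{T+1}$ as the paper does; the inclusion $\Gamma_T^{\theta}U\subset B_{T+R}^{\theta'}$ can fail for the finitely many lattice points near the identity (whose Cartan angular component is uncontrolled), a harmless point since their contribution is bounded, or one simply uses $G_{T+1}$, which is left $K$-invariant so Proposition~\ref{proposition: K-invariance} still applies.
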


\subsection{Convergence for the square root of the Poisson kernel}
\label{subsection: Sjogren}
It is convenient to identify the Furstenberg-Poisson boundary $B=G/P$ with the space $K/M$ using the diffeomorphism
\[
	K/M\to G/P
\]
which sends $kM$ to $kP$. We consider the  unique action of $G$ on $K/M$ which makes 
this diffeomorphism $G$-equivariant.  We  denote $\nu$ the push-forward on $K/M$ of the probability Haar measure on $K$. 
Let $G=KAN$ be the Iwasawa decomposition defined by $\frak a^+$. If $g=kan$ we denote its $\frak a$-component by
$H_I(g)=\log(a)$. Notice that if $m\in M$ then $H_I(gm)=H_I(g)$. 
%because $kanm=kamm^{-1}nm=kmam^{-1}nm$ where 
%$m^{-1}nm\in N$ (because $M$ normalizes $N$) and $km\in K$. 
The Radon-Nikodym cocycle for the action of $g\in G$ at the point $kM\in K/M$ is
$$
c(g,kM)=e^{-2\rho(H_I(gk))}.
$$
See for example \cite[Proposition 2.5.4]{GanVar}. The quasi-regular representation of $G$ on 
$\varphi\in L^2(K/M,\nu)$ is defined as
$$
(\pi_{\nu}(g)\varphi)(kM)=\varphi(g^{-1}kM)e^{-\rho(H_I(g^{-1}k))}.
$$
See for example \cite[(3.1.12) page 103]{GanVar}. The Harish-Chandra function is
\begin{align*}
\Xi(g)&=\langle\pi_{\nu}(g)\mathbbm 1_{K/M},\mathbbm 1_{K/M}\rangle=\int_{K/M}e^{-\rho(H_I(g^{-1}k))}{\rm d}\nu(kM)\\
      &=\int_K e^{-\rho(H_I(g^{-1}k))}{\rm d}k.
\end{align*}
Let $\varphi\in L^1(K/M,\nu)$.
The normalized square root of the Poisson kernel is
\begin{align*}
(\mathcal{P}_0\varphi)(g)&=\frac{\langle\pi_{\nu}(g)\mathbbm 1_{K/M},\overline{\varphi}\rangle}{\Xi(g)}\\
                     &=\frac{\int_{K/M} \varphi(kM)e^{-\rho(H_I(g^{-1}k))}{\rm d}\nu(kM)}{\Xi(g)}.
\end{align*}
See \cite{Sjö}.
Let $d$ be a left $K$-invariant Riemannian distance on $K/M$. The formula $L(g)=d_X(gx_0,x_0)$ defines a left $G$-invariant and $K$-bi-invariant length metric on $G$.

\begin{lemma}\label{lemma: upper bound on the square root of the Poisson kernel in symmetric spaces}(Dirac sequences on Poisson-Furstenberg boundaries.) With the notation as above let $\pi_{\nu}: G\to \mathcal{U}\bigl(L^2(K/M,\nu)\bigr)$ be the quasi-regular representation of $G$. Let $\theta>0$ be small enough so that the intersection of $\overline{\frak{a}^{\theta}}$ with the walls of $\overline{\frak a^+}$ is reduced to the origin. 
For each $r>0$ there exists a function $h_r:[0,\infty[\to[0,\infty[$ with the following properties:
\begin{enumerate}
	\item $h_r$ is non-increasing,
	\item $\lim_{s\to\infty}h_r(s)=0,$
	\item $\forall g\in KA^{\theta}K,$
\[
	\mathcal{P}_0\mathbbm 1_{\{x\in K/M :\, d(x,\bold{b}(g)\geq r\}}(g)\leq h_r(L(g)).
\]
\end{enumerate}
\end{lemma}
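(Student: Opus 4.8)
The plan is to reduce the claimed uniform bound to a Fatou-type theorem for the normalized square root of the Poisson kernel along the restricted cone $KA^\theta K$. The key quantity to estimate is
\[
	\mathcal{P}_0\mathbbm 1_{\{x\in K/M :\, d(x,\bold{b}(g))\geq r\}}(g)=\frac{1}{\Xi(g)}\int_{\{x : d(x,\bold{b}(g))\geq r\}}e^{-\rho(H_I(g^{-1}k))}\,{\rm d}\nu(kM).
\]
Writing $g=k_1ak_2$ with $a=\exp(H)$, $H\in\frak a^\theta$, the boundary point is $\bold{b}(g)=k_1M$, and after using the $K$-invariance of $\nu$ and of $d$ one may assume $k_1=e$, so that $\bold{b}(g)=eM$ and the integral runs over the complement of the $r$-ball around the basepoint $eM$ in $K/M$. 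The point of the assumption on $\theta$ (that $\overline{\frak a^\theta}$ meets the walls of $\overline{\frak a^+}$ only at the origin) is that as $\|H\|\to\infty$ inside the cone, the element $\exp(H)$ goes to infinity \emph{regularly}, i.e. uniformly away from all the walls; this is exactly the hypothesis under which Sj\"ogren's and Schlichtkrull's generalized Fatou theorems (cited as \cite{Sjö}, \cite{SjöAnnals}, \cite{Sch}) guarantee that the Poisson integral of a function converges to its boundary value, with control that is uniform in the direction $H/\|H\|$ over the compact set $\overline{\frak a^\theta}\cap\{\|H\|=1\}$.

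First I would make the change of variables reducing to $\bold{b}(g)=eM$ and record that, by the explicit formulas in Subsection \ref{subsection: Sjogren}, $\mathcal{P}_0\mathbbm 1_B(g)=1$ identically, so that $\mathcal{P}_0$ applied to a characteristic function is a genuine averaging operator with total mass one; hence $\mathcal{P}_0\mathbbm 1_{\{d(x,eM)\geq r\}}(g)=1-\mathcal{P}_0\mathbbm 1_{B(eM,r)}(g)$, and the task becomes showing that $\mathcal{P}_0\mathbbm 1_{B(eM,r)}(g)\to 1$ as $L(g)\to\infty$ with $g\in KA^\theta K$. Since $L(g)=\|H\|$ (using $d_X(\exp(H)x_0,x_0)=\|H\|$), this is precisely a statement about the radial behaviour of the Poisson integral of the continuous-away-from-the-boundary function $\mathbbm 1_{B(eM,r)}$ (or, to avoid the discontinuity, a continuous function sandwiched between $\mathbbm 1_{B(eM,r/2)}$ and $\mathbbm 1_{B(eM,r)}$). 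I would invoke the Fatou-type convergence theorem for the normalized square root of the Poisson kernel along regular directions, applied to such a continuous function $f$ with $f(eM)=1$: it gives $\mathcal{P}_0 f(k_1\exp(H)k_2)\to f(k_1 M)=1$ uniformly over $H$ in the cone as $\|H\|\to\infty$.

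Next I would extract from this uniform convergence the non-increasing majorant $h_r$. Define
\[
	g_r(s)=\sup\bigl\{\,\mathcal{P}_0\mathbbm 1_{\{x : d(x,\bold{b}(g))\geq r\}}(g)\ :\ g\in KA^\theta K,\ L(g)\geq s\,\bigr\},
\]
which is finite (bounded by $1$) and non-increasing in $s$ by construction, and tends to $0$ as $s\to\infty$ by the uniform convergence just established. Then $h_r:=g_r$ does the job: it is non-increasing, $\lim_{s\to\infty}h_r(s)=0$, and $\mathcal{P}_0\mathbbm 1_{\{d(x,\bold{b}(g))\geq r\}}(g)\leq h_r(L(g))$ for every $g\in KA^\theta K$ by definition of the supremum. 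One should also check measurability/finiteness issues, which are routine since everything is continuous in $g$ on the open dense set $KA^+K$ and the cocycle is continuous and bounded on compacta.

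The main obstacle is the second step: producing the \emph{uniform} (in the $K$-components and in the direction within the cone) Fatou convergence from the literature. The subtlety is that generalized Fatou theorems in higher rank are sensitive to the mode of approach to the boundary — unrestricted, restricted admissible, or weak tangential — and the naive radial statement is exactly why one restricts to a cone $\frak a^\theta$ whose closure avoids the walls: within such a cone the approach is "regular" and the admissible-convergence results of Sj\"ogren (and Michelson, Schlichtkrull) apply with bounds depending only on $\overline{\frak a^\theta}$. Making this precise — correctly citing the form of the theorem that gives convergence \emph{uniformly} over the direction in the compact slice $\overline{\frak a^\theta}\cap S(\frak a)$ and over $k_1,k_2\in K$, rather than merely pointwise along a single ray — is where the real content lies; everything else is bookkeeping with the explicit Iwasawa-coordinate formulas for $\Xi$, $c(g,kM)$ and $\mathcal{P}_0$ recorded above.
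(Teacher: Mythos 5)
Your proposal is correct and follows essentially the same route as the paper's proof: reduce to $g=\exp(H)$, $H\in\frak a^{\theta}$, by the $K$-invariance of $\nu$, $d$ and the $K$-bi-invariance of $L$ and $\Xi$; replace the indicator of the complement of the $r$-ball by a continuous Urysohn-type function vanishing at $eM$; apply Sj\"ogren's theorem (\cite[Theorem 5.1]{Sjö}), whose hypothesis $\alpha(H)\to\infty$ for all positive roots is exactly what the cone condition on $\theta$ guarantees; and take $h_r$ to be the supremum over the tail of the cone. The uniformity issue you flag as the remaining obstacle is precisely what that theorem supplies, and is how the paper concludes.
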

\begin{proof} 
As the space $K/M$ is normal, Urysohn's Lemma applies so there exists a continuous function $f_r$ on $K/M$ with the following properties:
\begin{itemize}
	\item $f_r\geq 0,$
	\item $f_r(x)=1, \, \forall x: d(x,eM)\geq r,$
	\item $f_r(x)=0, \, \forall x: d(x,eM)\leq r/2.$
\end{itemize}
According to \cite[Theorem 5.1 p. 373]{Sjö} for any  $r>0$ and  $s\geq 0$ the supremum
$$h_r(s)=
\sup_{H\in\frak{a}^{\theta}, \|H\|\geq s}
\mathcal{P}_0f_r(\exp(H))$$
is finite and
$$\lim_{s\to\infty}h_r(s)=f_r(eM)=0.$$
The notation $H\to\infty$ in \cite{Sjö} means $\alpha(H)\to\infty$ for any positive root $\alpha$;
sequences contained in the cone $\frak{a}^{\theta}$ with $\|H\|\to\infty$ obviously verify these conditions because
by hypothesis $\theta$ is small enough so that the intersection of the closed cone $\overline{\frak{a}^{\theta}}$ with the walls of $\overline{\frak a^+}$ is reduced to the origin.

Let $g\in KA^{\theta}K$. Let $k\in K,\, H\in\frak{a}^{\theta}, l\in K$ such that $g=k\exp(H)l$.
Hence ${\bold b}(g)=kM$. As the action of $K$ on $K/M$ preserves the measure and as $d$ is $K$-invariant, we have
$$
\pi_{\nu}(k^{-1})\mathbbm 1_{\{x\in K/M :\, d(x,kM)\geq r\}}=\mathbbm 1_{\{x\in K/M :\, d(x,eM)\geq r\}}.
$$
It follows that
\begin{align*}
\mathcal{P}_0\mathbbm 1_{\{x\in K/M :\, d(x,\bold{b}(g)\geq r\}}(g)&=\frac{\langle\pi_{\nu}(k\exp(H)l)\mathbbm 1_{K/M},\mathbbm 1_{\{x\in K/M :\, d(x,kM)\geq r\}}\rangle}{\Xi(k\exp(H)l)}\\
&=\frac{\langle\pi_{\nu}(\exp(H))\mathbbm 1_{K/M},\mathbbm 1_{\{x\in K/M :\, d(x,eM)\geq r\}}\rangle}{\Xi(\exp(H))}\\
&=\mathcal{P}_0\mathbbm 1_{\{x\in K/M :\, d(x,eM)\geq r\}}(\exp(H)).
\end{align*}
As $L$ is $K$-bi-invariant we conclude that it is enough to prove the lemma in the special case $g=\exp(H)$ with $H\in\frak{a}^{\theta}$. But $\mathcal{P}_0$ preserves positive functions and $$\mathbbm 1_{\{x\in K/M :\, d(x,eM)\geq r\}}\leq f_r.$$ This finishes the proof of the lemma.

\end{proof}
\subsection{Counting lattice points and the wave-front lemma}\label{subsection: counting lattice points and the wave-front lemma}
Recall  the notation $f\sim g$ for  $\lim_{T\to\infty}f(T)/g(T)=1.$

\begin{prop}\label{proposition: counting lattice points in sectors}(Counting lattice points in sectors.)
	Let $\theta>0$. As $T\to\infty$ we have
	\[
		|\Gamma^{\theta}_T|\sim|\Gamma_T|\sim|\Gamma\cap G_T|.
	\]	
\end{prop}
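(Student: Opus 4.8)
The plan is to deduce all three asymptotics from a single counting result, namely the asymptotic for $|\Gamma \cap G_T|$ as $T \to \infty$, which is the classical equidistribution/counting theorem of Eskin--McMullen and Duistermaat--Kolk--Varadarajan (or, in the form most convenient here, Gorodnik--Oh \cite{GorOh}) asserting that for a lattice $\Gamma$ in $G$ one has $|\Gamma \cap G_T| \sim \mathrm{vol}(G_T)/\mathrm{vol}(G/\Gamma)$. The key analytic input has in fact already been recorded in the excerpt: by Lemma 5.4 of \cite{GorOh}, $\mathrm{vol}(K A^\theta_T K) \sim c\, T^{(r-1)/2} e^{\delta T}$ and $\mathrm{vol}(G_T) \sim c\, T^{(r-1)/2} e^{\delta T}$ with the \emph{same} constant $c$ independent of $\theta$. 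Thus the volume of the truncated cone and the volume of the full truncated Cartan ball are asymptotically equal.

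The steps, in order, are as follows. First I would invoke the lattice point counting theorem to get $|\Gamma \cap G_T| \sim \mathrm{vol}(G_T)/\mathrm{vol}(G/\Gamma)$; the same theorem applies verbatim to the bi-$K$-invariant sets $B_T = K A^+_T K$ and $B^\theta_T = K A^\theta_T K$, since these are exactly the kind of ``well-rounded'' increasing families to which the Eskin--McMullen argument applies (the relevant well-roundedness being an immediate consequence of the volume asymptotics above together with the fact that enlarging $T$ to $T + \varepsilon$ changes the volume by a factor $\to 1$). Hence $|\Gamma_T| = |\Gamma \cap B_T| \sim \mathrm{vol}(B_T)/\mathrm{vol}(G/\Gamma)$ and $|\Gamma^\theta_T| = |\Gamma \cap B^\theta_T| \sim \mathrm{vol}(B^\theta_T)/\mathrm{vol}(G/\Gamma)$. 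Second, since $B^\theta_T \subset B_T \subset G_T$ we trivially have $|\Gamma^\theta_T| \le |\Gamma_T| \le |\Gamma \cap G_T|$, and the difference between consecutive terms is controlled by the ratio of volumes. Third, dividing the three asymptotics and using that $\mathrm{vol}(B^\theta_T) \sim \mathrm{vol}(G_T) \sim c\, T^{(r-1)/2} e^{\delta T}$ (and that $\mathrm{vol}(B_T)$ is squeezed between these two, hence has the same asymptotic), I conclude $|\Gamma^\theta_T| \sim |\Gamma_T| \sim |\Gamma \cap G_T|$.

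The main obstacle, such as it is, lies in Step 1: one must make sure that the Eskin--McMullen-type counting applies to the thin cone family $B^\theta_T$ and not merely to the balls $G_T$. This is where the uniformity of the constant $c$ in \cite[Lemma 5.4]{GorOh} is crucial — it is precisely what guarantees that the boundary effect $\mathrm{vol}(B^\theta_{T+\varepsilon} \setminus B^\theta_{T-\varepsilon})/\mathrm{vol}(B^\theta_T) \to 0$ as $\varepsilon \to 0$ uniformly, so that the cones are well-rounded in the sense required for the counting theorem. Once this well-roundedness is in hand, the counting is a black box and the remainder is the elementary squeezing argument of Steps 2 and 3. (In the body of the paper this is also exactly the counting input feeding into hypothesis (2) of Theorem \ref{theorem: ergodicity of some quasi-regular representations} via Subsection \ref{subsection: counting lattice points and the wave-front lemma}, so no new machinery beyond the wave-front lemma and \cite{GorOh} is needed.)
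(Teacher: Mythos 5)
Your reduction has the right shape (squeeze the three quantities using the volume asymptotics with the $\theta$-independent constant), but the key step is asserted rather than proved, and the asserted justification does not address the actual difficulty. Well-roundedness in the Eskin--McMullen counting theorem is not about replacing $T$ by $T\pm\varepsilon$: it requires comparing $\mathrm{vol}(\mathcal{O}B^{\theta}_T\mathcal{O})$ with the volume of $\bigcap_{u,v\in\mathcal{O}}uB^{\theta}_Tv$ for small neighborhoods $\mathcal{O}$ of $e$ in $G$, i.e.\ thickening by group elements acting on both sides. For the full Cartan balls $G_T=K\overline{A^+_T}K$ this does reduce to a radial perturbation, because the length function $L$ is bi-$K$-invariant and subadditive, so $\mathcal{O}G_T\mathcal{O}\subset G_{T+\varepsilon}$; this is why the counting theorem applies to $G_T$ with no further work. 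For the truncated cones $B^{\theta}_T=KA^{\theta}_TK$, however, multiplying $k\exp(H)l$ by elements of $\mathcal{O}$ can move the Cartan projection $H$ in the \emph{angular} direction, out of $\mathfrak{a}^{\theta}$, and the $KA^+K$ decomposition is badly behaved near the walls of the Weyl chamber, so there is no soft argument bounding this displacement. Your claim that well-roundedness of the cones is ``an immediate consequence of the volume asymptotics together with the fact that enlarging $T$ to $T+\varepsilon$ changes the volume by a factor $\to 1$'' only treats radial shells and leaves this angular/wall problem untouched; that problem is precisely what the strong wave-front lemma of Gorodnik--Oh is designed to control, and it is why the paper's proof shrinks to the sub-cone of half-angle $\theta/2$, discards a bounded piece near the origin, and insists that the closed sub-cone stay at positive distance from the walls before invoking that lemma.

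Note also that the paper deliberately avoids proving a counting asymptotic for the cones themselves. It applies Eskin--McMullen only to the balls $G_T$, and then bounds $|(\Gamma\cap G_T)\setminus\Gamma^{\theta}_T|$ directly: the wave-front inclusion gives $\bigl((\Gamma\cap G_T)\setminus KA^{\theta}(T_0)K\bigr)\mathcal{O}\subset G_{T+1}\setminus KA^{\theta/2}(T_0+1)K$, the translates $\gamma\mathcal{O}$ are pairwise disjoint, and the resulting volume bound is negligible relative to $\mathrm{vol}(G_T)$ by Lemma 5.4 of Gorodnik--Oh. So the paper's route needs no well-roundedness statement for the cones at all. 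Your outline would become a correct (and genuinely different) proof if you supplied an actual well-roundedness argument for $B^{\theta}_T$, which would again have to go through the wave-front lemma (or an equivalent Lipschitz-type control of the Cartan projection under bounded perturbations); as written, the step on which everything rests is exactly the one that is missing.
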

\begin{proof} Notice that if $0<\theta\leq\phi$ then for all $T>0$
\[
	\Gamma^{\theta}_T\subset\Gamma^{\phi}_T\subset\Gamma_T\subset\Gamma\cap G_T.
\]
Hence, in proving the proposition, we may assume $\theta$ is small. For all $T>0$
\[
	\frac{|(\Gamma\cap G_T)\setminus \Gamma^{\theta}_T|+|\Gamma^{\theta}_T|}{|\Gamma\cap G_T|}=1.
\]
Hence the proposition will be proved if we show that
\[
	\lim_{T\to\infty}\frac{|(\Gamma\cap G_T)\setminus \Gamma^{\theta}_T|}{|\Gamma\cap G_T|}=0.
\]
To that end we introduce the following notation. For any  $\phi>0$ and $R>0$ we define
\[
	\frak{a}^{\phi}(R)=\{H\in\frak{a}^{\phi}: \|H\|\geq R\}=\frak{a}^{\phi}\setminus\frak{a}_R.
\] 
Let us choose $T_0>1/\sin(\theta/2)$. Elementary trigonometry shows that
\[
	\frak{a}^{\theta/2}(T_0+1)+\frak{a}_1\subset\frak{a}^{\theta}(T_0). 
\]
As $\theta$ is small,  the closure of $\frak{a}^{\theta/2}(T_0+1)$ in $\frak a$ lies at positive distance of the walls of the Weyl chamber $\frak{a}^+$.
Hence we can apply the strong wave-front lemma from \cite[Theorem 2.1]{GorOh} to the (closure) of the subset
\[
	A^{\theta/2}(T_0+1)=\exp(\frak{a}^{\theta/2}(T_0+1))
\]
of $\overline{A^+}$ (warning: the closed set $\overline{A^+}=\exp(\overline{\frak{a}^+})$ is denoted  $A^+$ in \cite{GorOh}) and to the neighborhood  $V=\exp(\frak{a}_1)$ of $e$ in $A$. The conclusion of Theorem 2.1 from \cite{GorOh} is the existence of a neighborhood $\mathcal O$
of $e$ in $G$ with the following property:  any $g=k\exp(H)l$ with $k,l\in K$ and $H\in A^{\theta/2}(T_0+1)$ satisfies
\[
	g{\mathcal O}^{-1}\subset K\exp(H)VK.
\]
In other words
\begin{equation}\label{equation: the strong wave-front lemma}
	KA^{\theta/2}(T_0+1)K{\mathcal O}^{-1}\subset KA^{\theta/2}(T_0+1)VK.
\end{equation}
Choosing $\mathcal O$ smaller if needed, we may assume that it satisfies the following additional properties:
\[
\Gamma\cap\left(\mathcal O\cdot{\mathcal O}^{-1}\right)=\{e\}\quad\mbox{and}\,\, G_T\mathcal O\subset G_{T+1} \  \forall T>0.
\]
Combining Inclusion~(\ref{equation: the strong wave-front lemma}) with the following one
\[
A^{\theta/2}(T_0+1)V=\exp(\frak{a}^{\theta/2}(T_0+1)+\frak{a}_1)\subset\exp(\frak{a}^{\theta}(T_0))=A^{\theta}(T_0)
\]
we deduce that
\[
	KA^{\theta/2}(T_0+1)K{\mathcal O}^{-1}\subset KA^{\theta}(T_0)K.
\]
This in turn implies
\[
	\left((\Gamma\cap G_T)\setminus KA^{\theta}(T_0)K\right)\mathcal O\subset G_{T+1}\setminus KA^{\theta/2}(T_0+1)K.
\]
As $\Gamma\cap \left(\mathcal O\cdot{\mathcal O}^{-1}\right)=\{e\}$ the union
$\bigcup_{\gamma\in\Gamma}\gamma\mathcal O$
is disjoint, hence
\begin{align*}
	|(\Gamma\cap G_T)\setminus \Gamma^{\theta}_T|&\leq |(\Gamma\cap G_T)\setminus KA^{\theta}(T_0)K|\\
	&=\frac{1}{\mathrm{vol}(\mathcal O)}\mathrm{vol}\Bigl(\bigcup_{\gamma\in (\Gamma\cap G_T)\setminus KA^{\theta}(T_0)K}\gamma\mathcal O\Bigr)\\
	&\leq \frac{1}{\mathrm{vol}(\mathcal O)}\mathrm{vol}\left(G_{T+1}\setminus KA^{\theta/2}(T_0+1)K\right)\\
	&\leq \frac{1}{\mathrm{vol}(\mathcal O)}(\mathrm{vol}\left(G_{T+1}\setminus KA^{\theta/2}K\right)+\mathrm{vol}(G_{T_0+1})).
\end{align*}
This finishes the proof of the proposition because
according to \cite[Lemma 5.4]{GorOh}, for all $\phi>0$
\[
	\lim_{T\to\infty}\frac{\mathrm{vol}(G_T\setminus KA^{\phi}K)}{\mathrm{vol}(G_T)}=0,
\] 
and because there exists $C>0$ such that for all $T>1$
\begin{equation}\label{equation: vol(G_{T+1})}
	\mathrm{vol}(G_{T+1})\leq C \mathrm{vol}(G_T),
\end{equation}
and since according to \cite{EskMcM}
\begin{equation}\label{equation: EskMcM}
	|\Gamma\cap G_T|\cdot \mathrm{vol}(G/\Gamma)\sim \mathrm{vol}(G_T)
\end{equation}	
as $T\to\infty$.
\end{proof}

\begin{lemma}\label{lemma: half Roblin} Let $U,V$ be Borel subsets of $K/M$ with $\nu(\partial U)=\nu(\partial V)=0$.
Let $\theta>0$. Then
\[
	\limsup_{T\to\infty}\frac{|\{\gamma\in\Gamma^{\theta}_T: \bold{b}(\gamma)\in U\,\, \mbox{and}\,\, \bold{b}(\gamma^{-1})\in V \}|}{|\Gamma^{\theta}_T|}\leq\nu(U)\nu(V).
\]	
\end{lemma}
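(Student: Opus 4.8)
The plan is to translate the two boundary conditions on $\gamma$ into conditions on the two $K$-components of its Cartan decomposition, so that the set to be counted becomes the intersection of $\Gamma$ with a ``bisector'', and then to bound its cardinality by the counting results of \cite{GorOh}, i.e.\ by the very mechanism (wave-front lemma $+$ volume estimates) already used in the proof of Proposition~\ref{proposition: counting lattice points in sectors}. First I would record how $\bold b$ behaves on inverses. If $\gamma=k_1\exp(H)k_2$ with $k_1,k_2\in K$ and $H\in\frak a^+$, then $\bold b(\gamma)=k_1P$; fixing a representative $w_0\in N_K(A)\subset K$ of the longest element of the Weyl group, the opposition involution $-w_0$ preserves $\frak a^+$ and $\exp(-H)=w_0\exp(-w_0H)w_0^{-1}$, so
\[
\gamma^{-1}=(k_2^{-1}w_0)\exp(-w_0H)(w_0^{-1}k_1^{-1})
\]
is a Cartan decomposition (since $-w_0H\in\frak a^+$), whence $\bold b(\gamma^{-1})=k_2^{-1}w_0P$. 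Writing $\pi\colon K\to K/M$ for the projection and setting $\tilde U=\pi^{-1}(U)$, $K_V=w_0\,\pi^{-1}(V)^{-1}$, one gets
\[
\{\gamma\in\Gamma^{\theta}_T:\bold b(\gamma)\in U,\ \bold b(\gamma^{-1})\in V\}=\Gamma\cap\tilde U A^{\theta}_T K_V\ \subset\ \Gamma\cap\tilde U\,\overline{A^+_T}\,K_V .
\]
Here $\tilde U$ is right $M$-invariant, $K_V$ is left $M$-invariant, $\mathrm{Haar}_K(\tilde U)=\nu(U)$, $\mathrm{Haar}_K(K_V)=\nu(V)$, and, because $\nu=\pi_*\mathrm{Haar}_K$ and $\pi$ is continuous (so $\partial\pi^{-1}(\cdot)\subset\pi^{-1}(\partial\,\cdot)$), we have $\mathrm{Haar}_K(\partial\tilde U)=\mathrm{Haar}_K(\partial K_V)=0$.

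Next I would fix $\epsilon>0$ and, by outer regularity of $\nu$, choose open sets $O_U\supset\overline U$ and $O_V\supset\overline V$ in $K/M$ with $\nu(O_U)<\nu(U)+\epsilon$, $\nu(O_V)<\nu(V)+\epsilon$, and put $\tilde O_U=\pi^{-1}(O_U)$, $K_V^+=w_0\,\pi^{-1}(O_V)^{-1}$; these are open, respectively right and left $M$-invariant, and contain $\tilde U$, $K_V$. For any right $M$-invariant $K_1$ and left $M$-invariant $K_2$ in $K$, the Cartan integration formula of the subsection on volume estimates gives (the Jacobian $J$ not depending on the $K$-variables)
\[
\mathrm{vol}\bigl(K_1\,\overline{A^+_T}\,K_2\bigr)=\mathrm{Haar}_K(K_1)\,\mathrm{Haar}_K(K_2)\int_{\overline{\frak a^+_T}}J(H)\,{\rm d}H=\mathrm{Haar}_K(K_1)\,\mathrm{Haar}_K(K_2)\,\mathrm{vol}(G_T).
\]
Since the bisectors $\bigl\{\tilde O_U\,\overline{A^+_T}\,K_V^+\bigr\}_{T}$ form a well-rounded family (the $K$-factors having $\mathrm{Haar}_K$-null boundary), the counting results of \cite{GorOh} (see also \cite[Theorem 7.2]{GorNev}) yield
\[
|\Gamma\cap\tilde O_U\,\overline{A^+_T}\,K_V^+|\ \sim\ \frac{\mathrm{vol}(\tilde O_U\,\overline{A^+_T}\,K_V^+)}{\mathrm{vol}(G/\Gamma)}\ <\ \frac{(\nu(U)+\epsilon)(\nu(V)+\epsilon)}{\mathrm{vol}(G/\Gamma)}\,\mathrm{vol}(G_T).
\]
Combining the displayed inclusion with $|\Gamma^{\theta}_T|\sim|\Gamma\cap G_T|$ (Proposition~\ref{proposition: counting lattice points in sectors}) and $|\Gamma\cap G_T|\,\mathrm{vol}(G/\Gamma)\sim\mathrm{vol}(G_T)$ (Equation~(\ref{equation: EskMcM})), we obtain
\[
\limsup_{T\to\infty}\frac{|\{\gamma\in\Gamma^{\theta}_T:\bold b(\gamma)\in U,\ \bold b(\gamma^{-1})\in V\}|}{|\Gamma^{\theta}_T|}\ \le\ (\nu(U)+\epsilon)(\nu(V)+\epsilon),
\]
and letting $\epsilon\to0$ finishes the proof.

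The step I expect to be the genuine obstacle is the counting input: one needs $\Gamma$ to equidistribute not only in the radial $A^+$-direction but jointly in \emph{both} $K$-components of the Cartan decomposition — equivalently, that the bisectors $\tilde O_U\,\overline{A^+_T}\,K_V^+$ are well-rounded — and the crude engulfing estimate $|\Gamma\cap\Omega|\le\mathrm{vol}(\Omega\mathcal O)/\mathrm{vol}(\mathcal O)$ alone gives the wrong constant. This is exactly where the strong wave-front lemma of \cite{GorOh} does the work, by showing that thickening a lattice point by a small neighbourhood of $e$ moves \emph{both} of its $K$-components by an arbitrarily small amount, so that well-roundedness reduces to the volume computation above; the bookkeeping of the two sides of the Cartan decomposition and of the Weyl element $w_0$ relating $\bold b(\gamma^{-1})$ to the right $K$-component $k_2$ is the only mildly delicate point.
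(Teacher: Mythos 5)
Your proof is correct and follows essentially the same route as the paper: you convert the two boundary conditions into the inclusion of the set into a bisector $\tilde U\,\overline{A^+_T}\,w_0\tilde V^{-1}$ via the Cartan decomposition and the longest Weyl element (your $w_0$ is the paper's $m_0^{-1}$), count lattice points in that bisector using Gorodnik--Oh (the paper cites their Theorem~1.6 directly, which already handles sets with $\nu$-null boundary, so your $\epsilon$-thickening by open sets is an unneeded but harmless extra step), and normalize by $|\Gamma^{\theta}_T|\sim|\Gamma\cap G_T|$ together with the Eskin--McMullen asymptotic. No gaps to report.
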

\begin{proof}
Let $\mu$ be the probability Haar measure on $K$. Let $p:K\to K/M$ be the canonical projection. We have $p_*\mu=\nu$. Let
$\tilde U=p^{-1}(U)$ and $\tilde V=p^{-1}(V)$. 
Let $N(A)$ be the normalizer of $A$ in $G$ and let $M'=N(A)\cap K$. Recall that the Weyl group is the quotient $W=M'/M$ and that it contains a unique element $s_0$ which sends $\frak{a}^{+}$ to $-\frak{a}^{+}$.
Let $m_0\in M'$ such that $m_0M=s_0$. 

We claim that for all $T>0$
\begin{equation}\label{equation: inclusion}
\{\gamma\in\Gamma^{\theta}_T: \bold{b}(\gamma)\in U\,\, \mbox{and}\,\, \bold{b}(\gamma^{-1})\in V \}\subset
\Gamma\cap\tilde U\overline{A^+_T}m_0^{-1}(\tilde V)^{-1}.	
\end{equation}
Let $\gamma\in\Gamma^{\theta}_T$ such that $\bold{b}(\gamma)\in U$ and $\bold{b}(\gamma^{-1})\in V$. According to the Cartan decomposition there exist $k,l\in K$ and $H\in\frak{a}^+_T$ such that $\gamma=k\exp(H)l$. The proof will be complete if we show that
$k\in\tilde U$ and $l\in m_0^{-1}(\tilde V)^{-1}$. By definition $\bold{b}(\gamma)=kM=p(k)$ and by hypothesis $\bold{b}(\gamma)\in U$.
Hence $k\in\tilde U$. Recall that the \emph{opposition involution}
\[
	\iota:\frak{a}^+\to\frak{a}^+,\quad H\mapsto -\mathrm{Ad}(m_0)H
\]
satisfies $\exp(-H)=m_0^{-1}\exp(\iota H)m_0$  for all $H\in\frak{a}^+$.
We have
\[
	\gamma^{-1}=l^{-1}\exp(-H)k^{-1}=l^{-1}m_0^{-1}\exp(\iota H)m_0k^{-1}.
\]
As $l^{-1}m_0^{-1}\in K$,  $\iota H\in\frak{a}^+$ and  $m_0k^{-1}\in K$, this proves that
$\bold{b}(\gamma^{-1})=l^{-1}m_0^{-1}M=p(l^{-1}m_0^{-1})$. By hypothesis $\bold{b}(\gamma^{-1})\in V$ hence $l^{-1}m_0^{-1}\in\tilde V$ and
$l\in m_0^{-1}(\tilde V)^{-1}$. This finishes the proof of the claim. 

We apply \cite[Theorem 1.6]{GorOh} with $\Omega_1=\tilde U$ and $\Omega_2=m_0^{-1}(\tilde V)^{-1}$. Notice that for $i=1,2$, $\mu(\partial\Omega_i)=0$ because $\nu(\partial U)=\nu(\partial V)=0$.
Since $\mu(\Omega_1 M)=\mu(\tilde U M)=\mu(\tilde U)=\nu(U)$,  
\begin{align*}
\mu(M\Omega_2)&=\mu(Mm_0^{-1}(\tilde V)^{-1})=\mu(m_0Mm_0^{-1}(\tilde V)^{-1})\\
              &=\mu(M(\tilde V)^{-1})=\mu(\tilde V M)=\mu(\tilde V)=\nu(V)
\end{align*}
and
\begin{align*}
	\tilde U\overline{A^+_T}m_0^{-1}(\tilde V)^{-1}&=\tilde UM\overline{A^+_T}m_0^{-1}(\tilde V)^{-1}
	                                               =\tilde U\overline{A^+_T}Mm_0^{-1}(\tilde V)^{-1}\\
												   &=\Omega_1\overline{A^+_T}M\Omega_2,
\end{align*}

we obtain
\[
	|\Gamma\cap \tilde U\overline{A^+_T}m_0^{-1}(\tilde V)^{-1}|\mathrm{vol}(G/\Gamma)\sim\nu(U)\nu(V)\mathrm{vol}(G_T)
\]
as $T\to\infty$. 
Applying 
the equivalence 

\[
	|\Gamma\cap G_T|\cdot \mathrm{vol}(G/\Gamma)\sim \mathrm{vol}(G_T)
\]
as $T\to\infty$, Proposition~\ref{proposition: counting lattice points in sectors} and Inclusion~(\ref{equation: inclusion}) 
finishes the proof of the lemma.
\end{proof} 

\subsection{Uniformly bounded family of Markov operators}

\begin{prop}\label{proposition: K-invariance} Let $E\subset G$ be a Borel subset of finite non-zero Haar measure. Suppose $KE=E$.
	Then $$M_E^{\mathbbm 1_{K/M}}\mathbbm 1_{K/M}(x)=1\quad  \forall x\in K/M.$$
\end{prop}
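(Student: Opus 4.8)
The plan is to exploit the left $K$-invariance built into the boundary $B=K/M$: the function $\varphi_0:=M_E^{\mathbbm 1_{K/M}}\mathbbm 1_{K/M}$ will be $K$-invariant, hence constant (since $K$ acts transitively on $K/M$), and the value of that constant is then forced to be $1$ by the normalization $\langle M_E^{\mathbbm 1_{K/M}}\mathbbm 1_{K/M},\mathbbm 1_{K/M}\rangle=1$ already used in the proof of Theorem~\ref{theorem: ergodicity of some quasi-regular representations}. First I would record two invariances. For $k\in K$, uniqueness of the Iwasawa decomposition gives $H_I(k)=0$, so by the formula $c(g,k'M)=e^{-2\rho(H_I(gk'))}$ from Subsection~\ref{subsection: Sjogren} the cocycle satisfies $c(k^{-1},\cdot)\equiv 1$; consequently $\pi_{\nu}(k)$ acts on $L^2(K/M,\nu)$ by the plain rotation $(\pi_{\nu}(k)\varphi)(x)=\varphi(k^{-1}x)$, and in particular $\pi_{\nu}(k)\mathbbm 1_{K/M}=\mathbbm 1_{K/M}$. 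Also the Harish-Chandra function $\Xi(g)=\int_K e^{-\rho(H_I(g^{-1}k'))}\,{\rm d}k'$ is left $K$-invariant, $\Xi(kg)=\Xi(g)$ for every $k\in K$, by invariance of the Haar measure on $K$.

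Next, since $\mathbbm 1_{K/M}=\mathbbm 1_B$ is constant the weights $f(e(g))$ disappear and $M_E^{\mathbbm 1_{K/M}}=\frac{1}{\mathrm{vol}(E)}\int_E\frac{\pi_{\nu}(g)}{\Xi(g)}\,{\rm d}g$. Noting that $KE=E$ forces $kE=E$ for each $k\in K$, I would then compute, using the left-invariance of the Haar measure on $G$ and the left $K$-invariance of $\Xi$,
\[
\pi_{\nu}(k)M_E^{\mathbbm 1_{K/M}}=\frac{1}{\mathrm{vol}(E)}\int_E\frac{\pi_{\nu}(kg)}{\Xi(g)}\,{\rm d}g=\frac{1}{\mathrm{vol}(E)}\int_{kE}\frac{\pi_{\nu}(g)}{\Xi(k^{-1}g)}\,{\rm d}g=\frac{1}{\mathrm{vol}(E)}\int_E\frac{\pi_{\nu}(g)}{\Xi(g)}\,{\rm d}g=M_E^{\mathbbm 1_{K/M}}.
\]
Applying both sides to $\mathbbm 1_{K/M}$ yields $\pi_{\nu}(k)\varphi_0=\varphi_0$, i.e. $\varphi_0(k^{-1}x)=\varphi_0(x)$ for all $k\in K$ and all $x\in K/M$ (here one uses that $\pi_{\nu}(k)$ carries no cocycle factor). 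Transitivity of the $K$-action on $K/M$ then forces $\varphi_0$ to be constant. If one wants this pointwise and not merely $\nu$-almost everywhere, one may instead substitute $g\mapsto k_0g$ directly in $\varphi_0(k_0M)=\frac{1}{\mathrm{vol}(E)}\int_E\frac{e^{-\rho(H_I(g^{-1}k_0))}}{\Xi(g)}\,{\rm d}g$ and observe that the result does not depend on $k_0\in K$; alternatively one notes $\varphi_0$ is continuous since the cocycle $c(g,b)$ is.

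It remains to identify the constant $c$. Integrating the constant function $\varphi_0\equiv c$ against the probability measure $\nu$,
\[
c=\int_{K/M}\varphi_0\,{\rm d}\nu=\langle M_E^{\mathbbm 1_{K/M}}\mathbbm 1_{K/M},\mathbbm 1_{K/M}\rangle=\frac{1}{\mathrm{vol}(E)}\int_E\frac{\langle\pi_{\nu}(g)\mathbbm 1_{K/M},\mathbbm 1_{K/M}\rangle}{\Xi(g)}\,{\rm d}g=\frac{1}{\mathrm{vol}(E)}\int_E\frac{\Xi(g)}{\Xi(g)}\,{\rm d}g=1,
\]
so $\varphi_0\equiv 1$, as claimed. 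I do not expect a genuine obstacle here: the statement is a soft computation, and the only care needed is to keep straight the left/right $K$-invariances of $\Xi$, of $\mathbbm 1_{K/M}$, and of the set $E$; the essential inputs are that $K$ fixes $\mathbbm 1_{K/M}$, that $\Xi$ is left $K$-invariant, and that $K$ acts transitively on $K/M$.
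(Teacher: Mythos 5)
Your proposal is correct and follows essentially the same route as the paper: a change of variables using the $K$-invariance of $E$, of $\Xi$, and of the cocycle shows the function is constant on the (single) $K$-orbit, and Fubini together with $\langle\pi_{\nu}(g)\mathbbm 1_{K/M},\mathbbm 1_{K/M}\rangle=\Xi(g)$ identifies the constant as $1$. The only cosmetic difference is that you phrase the invariance as the operator identity $\pi_{\nu}(k)M_E^{\mathbbm 1_{K/M}}=M_E^{\mathbbm 1_{K/M}}$, and you rightly note the pointwise (not just a.e.) statement via the direct substitution in the integral.
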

\begin{proof} As $E$ is $K$-invariant, a change of variables shows that the function is constant on any orbit of $K$. As the action of $K$ is transitive the function is constant. Applying Fubini shows that its integral on the probability space $(K/M,\nu)$ equals $1$.
%	Let $x\in K/M$ and $k\in K$.  Then
%	\begin{align*}
%	M_E^{\mathbbm 1_{K/M}}\mathbbm 1_{K/M}(x)&=\frac{1}{\mathrm{vol}(E)}\int_E\frac{\pi_{\nu}(g)\mathbbm 1_{K/M}(x)}{\Xi(g)}{\rm d}g\\
%	&=\frac{1}{\mathrm{vol}(E)}\int_{kE}\frac{\pi_{\nu}(k^{-1}h)\mathbbm 1_{K/M}(x)}{\Xi(k^{-1}h)}\mathrm{d}(k^{-1}h)\\
%	&=\frac{1}{\mathrm{vol}(E)}\int_{E}\frac{\pi_{\nu}(h)\mathbbm 1_{K/M}(kx)}{\Xi(h)}\mathrm{d}h\\
%	&=M_E^{\mathbbm 1_{K/M}}\mathbbm 1_{K/M}(kx).	
%	\end{align*}
%As $K$ acts transitively on $K/M$ this shows that the function is constant. To find its value we integrate it over 
%the probability space $K/M$:
%\begin{align*}
%	&\int_{K/M}M_E^{\mathbbm 1_{K/M}}\mathbbm 1_{K/M}(x){\rm d}\nu(x)\\
%	&=\frac{1}{\mathrm{vol}(E)}\int_E\frac{\int_{K/M}\pi_{\nu}(g)\mathbbm 1_{K/M}(x){\rm d}\nu(x)}{\Xi(g)}{\rm d}g\\
%	&=\frac{1}{\mathrm{vol}(E)}\int_E\frac{\langle\pi_{\nu}(g)\mathbbm 1_{K/M},\mathbbm 1_{K/M}\rangle}{\Xi(g)}{\rm d}g\\
%	&=1.
%\end{align*}	
\end{proof}

\begin{prop}\label{proposition: uniform boundedness} For any $\theta>0$
\[
	\sup_{T>1}\|M_{\Gamma_T^{\theta}}^{\mathbbm 1_{K/M}}\mathbbm 1_{K/M}\|_{\infty}<\infty.
\]	
\end{prop}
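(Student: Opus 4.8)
The plan is to replace the discrete cocycle average over $\Gamma_T^\theta$ by a \emph{continuous} one over a bi-$K$-invariant truncated ball, where Proposition~\ref{proposition: K-invariance} evaluates it explicitly to $1$, and then to absorb the loss in the discretization using the counting of lattice points in sectors.

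First I would apply Lemma~\ref{lemma: from discrete to continuous} to the discrete subgroup $\Gamma\subset G$ and the quasi-regular representation $\lambda_{G/P}$ on $L^2(K/M,\nu)$: this produces a relatively compact neighborhood $U$ of $e$ in $G$ and a constant $C>0$, both depending only on $\Gamma$, such that for every $b\in K/M$ and every $T$ for which $\Gamma_T^\theta\neq\emptyset$
\[
	\sum_{\gamma\in\Gamma_T^\theta}\frac{\lambda_{G/P}(\gamma)\mathbbm 1_{K/M}(b)}{\Xi(\gamma)}\leq C\int_{\Gamma_T^\theta U}\frac{\lambda_{G/P}(g)\mathbbm 1_{K/M}(b)}{\Xi(g)}\,{\rm d}g.
\]
Shrinking $U$ if necessary — this only enlarges $C$ and leaves the left-stability estimates of Proposition~\ref{proposition: stable} untouched — I may in addition arrange that $\sup_{u\in U}d_X(ux_0,x_0)\leq 1$.

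Next I would enlarge $\Gamma_T^\theta U$ to a truncated ball. Since $\Gamma_T^\theta\subset B_T^\theta=KA_T^\theta K$, the radial Cartan component of each $\gamma\in\Gamma_T^\theta$ has norm $<T$, so $L(\gamma)=d_X(\gamma x_0,x_0)<T$; hence $L(\gamma u)\leq L(\gamma)+d_X(ux_0,x_0)<T+1$ for $u\in U$, and by the Cartan decomposition $\gamma u\in K\overline{A^+_{T+1}}K=G_{T+1}$. Thus $\Gamma_T^\theta U\subseteq G_{T+1}$, and since the integrand is non-negative,
\[
	\int_{\Gamma_T^\theta U}\frac{\lambda_{G/P}(g)\mathbbm 1_{K/M}(b)}{\Xi(g)}\,{\rm d}g\leq\int_{G_{T+1}}\frac{\lambda_{G/P}(g)\mathbbm 1_{K/M}(b)}{\Xi(g)}\,{\rm d}g=\mathrm{vol}(G_{T+1})\,M_{G_{T+1}}^{\mathbbm 1_{K/M}}\mathbbm 1_{K/M}(b).
\]
Because $G_{T+1}$ is left-$K$-invariant, Proposition~\ref{proposition: K-invariance} gives $M_{G_{T+1}}^{\mathbbm 1_{K/M}}\mathbbm 1_{K/M}(b)=1$ for all $b$, and combining this with the previous display I obtain the uniform-in-$b$ bound $\|M_{\Gamma_T^\theta}^{\mathbbm 1_{K/M}}\mathbbm 1_{K/M}\|_\infty\leq C\,\mathrm{vol}(G_{T+1})/|\Gamma_T^\theta|$.

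It then remains to check that $\mathrm{vol}(G_{T+1})/|\Gamma_T^\theta|$ stays bounded as $T\to\infty$. Here I would combine Proposition~\ref{proposition: counting lattice points in sectors} ($|\Gamma_T^\theta|\sim|\Gamma\cap G_T|$), the counting equivalence~(\ref{equation: EskMcM}) ($|\Gamma\cap G_T|\cdot\mathrm{vol}(G/\Gamma)\sim\mathrm{vol}(G_T)$) and the volume comparison~(\ref{equation: vol(G_{T+1})}) ($\mathrm{vol}(G_{T+1})\leq C'\mathrm{vol}(G_T)$); on any bounded range of $T$ the ratio is finite since $|\Gamma_T^\theta|\geq 1$ whenever $\Gamma_T^\theta\neq\emptyset$, whence $\sup_{T>1}\|M_{\Gamma_T^\theta}^{\mathbbm 1_{K/M}}\mathbbm 1_{K/M}\|_\infty<\infty$. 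The step I expect to carry the real weight is this last one: a priori the cone $A_T^\theta$ is thin inside the ball $G_T$, and it is precisely the concentration of the Haar volume near the barycenter direction — so that $\mathrm{vol}(KA_T^\theta K)\sim\mathrm{vol}(G_T)$ by \cite[Lemma 5.4]{GorOh}, together with the lattice point count of Proposition~\ref{proposition: counting lattice points in sectors} — that keeps $|\Gamma_T^\theta|$ comparable to $\mathrm{vol}(G_T)$. Everything else is bookkeeping around Lemma~\ref{lemma: from discrete to continuous} and Proposition~\ref{proposition: K-invariance}.
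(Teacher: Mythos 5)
Your proposal is correct and follows essentially the same route as the paper's proof: Lemma~\ref{lemma: from discrete to continuous} to pass from the discrete sum to an integral over $\Gamma_T^{\theta}U\subset G_{T+1}$, Proposition~\ref{proposition: K-invariance} to evaluate the bi-$K$-invariant continuous average as $1$, and the combination of Proposition~\ref{proposition: counting lattice points in sectors}, Equivalence~(\ref{equation: EskMcM}) and Inequality~(\ref{equation: vol(G_{T+1})}) to bound $\mathrm{vol}(G_{T+1})/|\Gamma_T^{\theta}|$ uniformly. The only difference is cosmetic: you make explicit the triangle-inequality justification of $\Gamma_T^{\theta}U\subset G_{T+1}$, which the paper simply asserts by choosing $U$ small enough.
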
 
\begin{proof}
Let $U$ be a neighborhood of $e$ in $G$ and $C>0$ as in Lemma~\ref{lemma: from discrete to continuous}. We may choose $U$ small enough so that for all $T>0$ we have $\Gamma_T^{\theta}U\subset G_{T+1}$. For any  $x\in K/M$ we have
\begin{align*}
	M_{\Gamma_T^{\theta}}^{\mathbbm 1_{K/M}}\mathbbm 1_{K/M}(x)&=\frac{1}{|\Gamma_T^{\theta}|}\sum_{\gamma\in \Gamma_T^{\theta}}\frac{\pi_{\nu}(\gamma)\mathbbm 1_{K/M}(x)}{\Xi(\gamma)}\\
	&\leq \frac{C}{|\Gamma_T^{\theta}|} \int_{\Gamma_T^{\theta}U}\frac{\pi_{\nu}(g)\mathbbm 1_{K/M}(x)}{\Xi(g)}{\rm d}g\\
	&\leq \frac{C}{|\Gamma_T^{\theta}|} \int_{G_{T+1}}\frac{\pi_{\nu}(g)\mathbbm 1_{K/M}(x)}{\Xi(g)}{\rm d}g.
\end{align*} 
Applying Inequality~(\ref{equation: vol(G_{T+1})}) and Equivalence~(\ref{equation: EskMcM}) as well as Proposition~\ref{proposition: counting lattice points in sectors} we see that this integral is bounded above by 
\[
	C'\frac{1}{\mathrm{vol}(G_{T+1})} \int_{G_{T+1}}\frac{\pi_{\nu}(g)\mathbbm 1_{K/M}(x)}{\Xi(g)}{\rm d}g,
\]
where $C'>0$ is a constant which neither depends  on $T>1$ nor on $x\in K/M$.
Since $KG_T=G_T$ for any $T>0$, Proposition \ref{proposition: K-invariance} applies and we obtain
\[
	\frac{1}{\mathrm{vol}(G_{T+1})} \int_{G_{T+1}}\frac{\pi_{\nu}(g)\mathbbm 1_{K/M}(x)}{\Xi(g)}{\rm d}g=1.
\]
\end{proof}

\subsection{Proof of Theorem \ref{theo: ergodicity relative to cones of the quasi-regular representation of a lattice in a semisimple Lie group} in the case $\theta$ is small} 
\begin{proof} We assume $\theta$ is small enough so that the intersection of $\overline{\frak{a}^{\theta}}$ with the walls of $\overline{\frak a^+}$ is reduced to the origin. We deduce Theorem~\ref{theo: ergodicity relative to cones of the quasi-regular representation of a lattice in a semisimple Lie group} from  Theorem \ref{theorem: ergodicity of some quasi-regular representations}. Let us check that all the  hypotheses of Theorem \ref{theorem: ergodicity of some quasi-regular representations} are satisfied in the case of a lattice in a non-compact connected semisimple Lie group with finite center. As $\theta$ is small we can apply Lemma~\ref{lemma: upper bound on the square root of the Poisson kernel in symmetric spaces}. Hence the first condition is satisfied. Applying Lemma~\ref{lemma: half Roblin} we see that the second condition is satisfied. Proposition \ref{proposition: uniform boundedness} implies that 
\[
	\sup_{T>1}\|M_{\Gamma_T^{\theta}}^{\mathbbm 1_{K/M}}\mathbbm 1_{K/M}\|_{\infty}<\infty.
\]		
As $K/M$ is a compact differentiable mani\-fold (and $\nu$ is equivalent to the Lebesgue measure), characteristic functions of the type $\mathbbm 1_W$, with $W\subset K/M$ Borel such that $\nu(\partial W)=0$ span a subset of $L^{\infty}(K/M,\nu)$ whose closure contains the continuous functions on $K/M$.
\end{proof}	
\subsection{Proofs of Theorem \ref{theo: ergodicity relative to cones of the quasi-regular representation of a lattice in a semisimple Lie group} and Theorem \ref{theo: ergodicity of the quasi-regular representation of a lattice in a semisimple Lie group}} 
Notice that for $\phi$ large enough we have $\frak{a}^{\phi}=\frak{a}^+$, and therefore
\[
	\Gamma_T^{\phi}=B_T^{\phi}\cap\Gamma=KA_T^{\phi}K\cap\Gamma=KA_TK\cap\Gamma=\Gamma_T.
\]
Hence Theorem \ref{theo: ergodicity of the quasi-regular representation of a lattice in a semisimple Lie group} is a special case of Theorem \ref{theo: ergodicity relative to cones of the quasi-regular representation of a lattice in a semisimple Lie group}. We now prove Theorem \ref{theo: ergodicity relative to cones of the quasi-regular representation of a lattice in a semisimple Lie group}.
\begin{proof}
Applying Proposition \ref{proposition: uniform boundedness} and Lemma \ref{lemma: averaging the cocycle at the point b} we deduce that for any $\phi>0$ and any continuous function $f$ on $K/M$, 
\[
\sup_{T>1}\|M_{\Gamma_T^{\phi}}^f\|_{op}<\infty.	
\]
Suppose $\phi>0$ is given. Let $0<\theta<\phi$ be small enough so that we can apply the already proven special case of Theorem \ref{theo: ergodicity relative to cones of the quasi-regular representation of a lattice in a semisimple Lie group}. Hence, for any continuous function $f$ on $K/M$ the quasi-regular representation $\pi_{\nu}$ is ergodic relative to $(\Gamma_T^{\theta},{\bold b}|_{\Gamma_T^{\theta}})$. It follows from Proposition~\ref{proposition: counting lattice points in sectors} that
\[
		\lim_{T\to\infty}\frac{|\Gamma^{\phi}_T\setminus \Gamma^{\theta}_T|}{|\Gamma^{\phi}_T|}=0.
\]
Hence we can apply Proposition \ref{prop: ergodicity relative to cones and subcones}. We deduce that for any continuous function $f$ on $K/M$ the quasi-regular representation $\pi_{\nu}$ is ergodic relative to $(\Gamma_T^{\phi},{\bold b}|_{\Gamma_T^{\phi}})$.
\end{proof}

\begin{bibdiv}
\begin{biblist}
	
\bib{Ana}{book}{
   author={Anantharaman, Claire},
   author={Anker, Jean-Philippe},
   author={Babillot, Martine},
   author={Bonami, Aline},
   author={Demange, Bruno},
   author={Grellier, Sandrine},
   author={Havard, Fran{\c{c}}ois},
   author={Jaming, Philippe},
   author={Lesigne, Emmanuel},
   author={Maheux, Patrick},
   author={Otal, Jean-Pierre},
   author={Schapira, Barbara},
   author={Schreiber, Jean-Pierre},
   title={Th\'eor\`emes ergodiques pour les actions de groupes},
   language={French},
   series={Monographies de L'Enseignement Math\'ematique [Monographs of
   L'Enseignement Math\'ematique]},
   volume={41},
   note={With a foreword in English by Amos Nevo},
   publisher={L'Enseignement Math\'ematique, Geneva},
   date={2010},
   pages={270},
   isbn={978-2-940264-08-7},
   %review={\MR{2643350}},
}
	
\bib{BadMuc}{article}{
   author={Bader, Uri},
   author={Muchnik, Roman},
   title={Boundary unitary representations---irreducibility and rigidity},
   journal={J. Mod. Dyn.},
   volume={5},
   date={2011},
   number={1},
   pages={49--69},
   issn={1930-5311},
  % review={\MR{2787597}},
   %doi={10.3934/jmd.2011.5.49},
}

\bib{BHV}{book}{
   author={Bekka, Bachir},
   author={de la Harpe, Pierre},
   author={Valette, Alain},
   title={Kazhdan's property (T)},
   series={New Mathematical Monographs},
   volume={11},
   publisher={Cambri{\rm d}ge University Press, Cambri{\rm d}ge},
   date={2008},
   pages={xiv+472},
   isbn={978-0-521-88720-5},
   %review={\MR{2415834 (2009i:22001)}},
   %doi={10.1017/CBO9780511542749},
}

\bib{Bou}{article}{
   author={Bourdon, Marc},
   title={Structure conforme au bord et flot g\'eo{\rm d}\'esique d'un ${\rm
   CAT}(-1)$-espace},
   language={French, with English and French summaries},
   journal={Enseign. Math. (2)},
   volume={41},
   date={1995},
   number={1-2},
   pages={63--102},
   %issn={0013-8584},
   %review={\MR{1341941}},
}

\bib{BowNev}{article}{
   author={Bowen, Lewis},
   author={Nevo, Amos},
   title={von Neumann and Birkhoff ergodic theorems for negatively curved
   groups},
   language={English, with English and French summaries},
   journal={Ann. Sci. \'Ec. Norm. Sup\'er. (4)},
   volume={48},
   date={2015},
   number={5},
   pages={1113--1147},
   %issn={0012-9593},
   %review={\MR{3429477}},
}

\bib{BoyCAT}{article}{
   author={Boyer, Adrien},
   title={Equidistribution, ergodicity and irreducibility in CAT$(-1)$ spaces},
   %language={French, with Engl},
   journal={ arXiv:1412.8229 },
   %volume={41},
   date={2014},
   %number={1-2},
   %pages={1--},
   %issn={0013-8584},
   %review={\MR{1341941}},
}

\bib{BoyThesis}{article}{
   author={Boyer, Adrien},
   title={Sur certains aspects de la propri\'et\'e RD pour des repr\'esentations sur les bords de Poisson-Furstenberg},
   language={French},
   journal={Aix-Marseille University},
   %volume={41},
   date={2014},
   %number={1-2},
   pages={1--92},
   %issn={0013-8584},
   %review={\MR{1341941}},
}

\bib{BoyMay}{article}{
   author={Boyer, Adrien},
   author={Mayeda, Dustin},
   title={Equidistribution, ergodicity and irreducibility associated with Gibbs measures},
   %language={French, with Engl},
   journal={arXiv:1601.02275},
   %volume={41},
   date={2016},
   %number={1-2},
   pages={1--26},
   %issn={0013-8584},
   %review={\MR{1341941}},
}

\bib{BoyPin}{article}{
   author={Boyer, Adrien},
   author={Antoine, Pinochet-Lobos},
   title={An ergodic theorem for the quasi-regular representation of the free group},
   %language={French, with Engl},
   journal={arXiv:1601.00668},
   %volume={41},
   date={2016},
   %number={1-2},
   pages={1--9},
   %issn={0013-8584},
   %review={\MR{1341941}},
}

%\bib{Buf}{article}{
%   author={Bufetov, Alexander I.},
%   title={Convergence of spherical averages for actions of free groups},
%   journal={Ann. of Math. (2)},
%   volume={155},
%   date={2002},
%   number={3},
%   pages={929--944},
%   issn={0003-486X},
   %review={\MR{1923970}},
   %doi={10.2307/3062137},
%}

\bib{Cha}{article}{
   author={Chatterji, Indira},
   title={Introduction to the Rapid Decay property},
   %language={French, with Engl},
   journal={ arXiv:1604.06387 },
   %volume={41},
   date={2016},
   %number={1-2},
   %pages={},
   %issn={0013-8584},
   %review={\MR{1341941}},
}

\bib{ConMuc}{article}{
   author={Connell, Chris},
   author={Muchnik, Roman},
   title={Harmonicity of quasiconformal measures and Poisson boundaries of
   hyperbolic spaces},
   journal={Geom. Funct. Anal.},
   volume={17},
   date={2007},
   number={3},
   pages={707--769},
   issn={1016-443X},
   %review={\MR{2346273 (2009e:60179)}},
   %doi={10.1007/s00039-007-0608-9},
}

\bib{CowSte}{article}{
   author={Cowling, M.},
   author={Steger, T.},
   title={The irreducibility of restrictions of unitary representations to
   lattices},
   journal={J. Reine Angew. Math.},
   volume={420},
   date={1991},
   pages={85--98},
   %issn={0075-4102},
   %review={\MR{1124567}},
}
	
\bib{DalPeiPicSam}{article}{
	   author={Dal'Bo, Fran{\c{c}}oise},
	   author={Peign{\'e}, Marc},
	   author={Picaud, Jean-Claude},
	   author={Sambusetti, Andrea},
	   title={On the growth of nonuniform lattices in pinched negatively curved
	   manifolds},
	   journal={J. Reine Angew. Math.},
	   volume={627},
	   date={2009},
	   pages={31--52},
	   issn={0075-4102},
	   %review={\MR{2494912}},
	   %doi={10.1515/CRELLE.2009.010},
	}
	
\bib{EskMcM}{article}{
   author={Eskin, Alex},
   author={McMullen, Curt},
   title={Mixing, counting, and equidistribution in Lie groups},
   journal={Duke Math. J.},
   volume={71},
   date={1993},
   number={1},
   pages={181--209},
   issn={0012-7094},
   %review={\MR{1230290}},
   %doi={10.1215/S0012-7094-93-07108-6},
}
	
\bib{GanVar}{book}{
   author={Gangolli, Ramesh},
   author={Varadarajan, V. S.},
   title={Harmonic analysis of spherical functions on real reductive groups},
   series={Ergebnisse der Mathematik und ihrer Grenzgebiete [Results in
   Mathematics and Related Areas]},
   volume={101},
   publisher={Springer-Verlag, Berlin},
   date={1988},
   pages={xiv+365},
   isbn={3-540-18302-7},
   %review={\MR{954385}},
   %doi={10.1007/978-3-642-72956-0},
}

\bib{Gar}{article}{
   author={Garncarek,Lukasz},
   title={Boundary representations of hyperbolic groups},
   %language={French, with Engl},
   journal={arXiv:1404.0903},
   %volume={41},
   date={2014},
   %number={1-2},
   pages={1--20},
   %issn={0013-8584},
   %review={\MR{1341941}},
}	
	
\bib{God}{book}{
   author={Godement, Roger},
   title={Analyse math\'ematique. IV},
   language={French},
   note={Int\'egration et th\'eorie spectrale, analyse harmonique, le jardin
   des {\rm d}\'elices modulaires. [Integration and spectral theory, harmonic
   analysis, the Garden of Modular Delights]},
   publisher={Springer-Verlag, Berlin},
   date={2003},
   pages={xii+599},
   isbn={3-540-43841-6},
   %review={\MR{1995794 (2005k:11002)}},
}

\bib{GorMau}{article}{
   author={Gorodnik, Alexander},
   author={Maucourant, Francois},
   title={Proximality and equidistribution on the Furstenberg boundary},
   journal={Geom. Dedicata},
   volume={113},
   date={2005},
   pages={197--213},
   issn={0046-5755},
   %review={\MR{2171305}},
   %doi={10.1007/s10711-005-5539-8},
}

\bib{GorNev}{book}{
   author={Gorodnik, Alexander},
   author={Nevo, Amos},
   title={The ergodic theory of lattice subgroups},
   series={Annals of Mathematics Studies},
   volume={172},
   publisher={Princeton University Press, Princeton, NJ},
   date={2010},
   pages={xiv+121},
   isbn={978-0-691-14185-5},
   %review={\MR{2573139}},
}
	
\bib{GorOh}{article}{
   author={Gorodnik, Alexander},
   author={Oh, Hee},
   title={Orbits of discrete subgroups on a symmetric space and the
   Furstenberg boundary},
   journal={Duke Math. J.},
   volume={139},
   date={2007},
   number={3},
   pages={483--525},
   issn={0012-7094},
   %review={\MR{2350851 (2008k:22020)}},
   %doi={10.1215/S0012-7094-07-13933-4},
}

\bib{Gui}{article}{
   author={Guivarc{\cprime}h, Yves},
   title={G\'en\'eralisation d'un th\'eor\`eme de von Neumann},
   language={French},
   journal={C. R. Acad. Sci. Paris S\'er. A-B},
   volume={268},
   date={1969},
   pages={A1020--A1023},
   %review={\MR{0251191}},
}

\bib{HelDifGeo}{book}{
   author={Helgason, Sigurdur},
   title={Differential geometry, Lie groups, and symmetric spaces},
   series={Graduate Studies in Mathematics},
   volume={34},
   note={Corrected reprint of the 1978 original},
   publisher={American Mathematical Society, Providence, RI},
   date={2001},
   pages={xxvi+641},
   isbn={0-8218-2848-7},
   %review={\MR{1834454 (2002b:53081)}},
   %doi={10.1090/gsm/034},
}

\bib{Kai}{article}{
   author={Kaimanovich, V. A.},
   title={Double ergodicity of the Poisson boundary and applications to
   bounded cohomology},
   journal={Geom. Funct. Anal.},
   volume={13},
   date={2003},
   number={4},
   pages={852--861},
   issn={1016-443X},
   %review={\MR{2006560 (2004k:60128)}},
   %doi={10.1007/s00039-003-0433-8},
}

\bib{Kna}{book}{
   author={Knapp, Anthony W.},
   title={Representation theory of semisimple groups},
   series={Princeton Landmarks in Mathematics},
   note={An overview based on examples;
   Reprint of the 1986 original},
   publisher={Princeton University Press, Princeton, NJ},
   date={2001},
   pages={xx+773},
   isbn={0-691-09089-0},
   %review={\MR{1880691 (2002k:22011)}},
}

%\bib{LieLos}{book}{
 %  author={Lieb, Elliott H.},
 %  author={Loss, Michael},
 %  title={Analysis},
 %  series={Graduate Studies in Mathematics},
 %  volume={14},
 %  edition={2},
 %  publisher={American Mathematical Society, Providence, RI},
 %  date={2001},
 %  pages={xxii+346},
 %  isbn={0-8218-2783-9},
   %review={\MR{1817225}},
   %doi={10.1090/gsm/014},
%}

\bib{Lin}{article}{
   author={Lindenstrauss, Elon},
   title={Pointwise theorems for amenable groups},
   journal={Electron. Res. Announc. Amer. Math. Soc.},
   volume={5},
   date={1999},
   pages={82--90 (electronic)},
   issn={1079-6762},
   %review={\MR{1696824}},
  % doi={10.1090/S1079-6762-99-00065-7},
}

\bib{Nev}{article}{
   author={Nevo, Amos},
   title={Pointwise ergodic theorems for actions of groups},
   conference={
      title={Handbook of dynamical systems. Vol. 1B},
   },
   book={
      publisher={Elsevier B. V., Amsterdam},
   },
   date={2006},
   pages={871--982},
   %review={\MR{2186253}},
   %doi={10.1016/S1874-575X(06)80038-X},
}

%\bib{ReeSim}{book}{
%  author={Reed, Michael},
%  author={Simon, Barry},
%  title={Methods of modern mathematical physics. I},
%  edition={2},
%  note={Functional analysis},
%  publisher={Academic Press, Inc. [Harcourt Brace Jovanovich, Publishers],
%  New York},
%  date={1980},
%  pages={xv+400},
%  isbn={0-12-585050-6},
  % review={\MR{751959 (85e:46002)}},
%}

%\bib{ReeSimII}{book}{
%   author={Reed, Michael},
%   author={Simon, Barry},
%   title={Methods of modern mathematical physics. II. Fourier analysis,
%   self-adjointness},
%   publisher={Academic Press [Harcourt Brace Jovanovich, Publishers], New
%   York-London},
%   date={1975},
%   pages={xv+361},
   %review={\MR{0493420}},
%}

\bib{Rob}{article}{
   author={Roblin, Thomas},
   title={Ergodicit\'e et \'equidistribution en courbure n\'egative},
   language={French, with English and French summaries},
   journal={M\'em. Soc. Math. Fr. (N.S.)},
   number={95},
   date={2003},
   pages={vi+96},
   %issn={0249-633X},
   %review={\MR{2057305}},
}

\bib{Ricks}{article}{
   author={Ricks, Russell},
   title={Flat strips, Bowen-Margulis measures, and mixing of the geodesic flow for rank one CAT(0) spaces},
   %language={French, with Engl},
   journal={arXiv:1410.3921v1},
   %volume={41},
   date={2014},
   %number={1-2},
   pages={1--39},
   %issn={0013-8584},
   %review={\MR{1341941}},
}

\bib{Sch}{article}{
   author={Schlichtkrull, Henrik},
   title={On the boundary behaviour of generalized Poisson integrals on
   symmetric spaces},
   journal={Trans. Amer. Math. Soc.},
   volume={290},
   date={1985},
   number={1},
   pages={273--280},
   issn={0002-9947},
   %review={\MR{787965}},
   %doi={10.2307/1999794},
}
		
\bib{Sha}{article}{
   author={Shalom, Yehuda},
   title={Rigidity, unitary representations of semisimple groups, and
   fundamental groups of manifolds with rank one transformation group},
   journal={Ann. of Math. (2)},
   volume={152},
   date={2000},
   number={1},
   pages={113--182},
   issn={0003-486X},
   %review={\MR{1792293 (2001m:22022)}},
   %doi={10.2307/2661380},
}

\bib{SjöAnnals}{article}{
   author={Sj{\"o}gren, Peter},
   title={Admissible convergence of Poisson integrals in symmetric spaces},
   journal={Ann. of Math. (2)},
   volume={124},
   date={1986},
   number={2},
   pages={313--335},
   issn={0003-486X},
   %review={\MR{855298}},
   %doi={10.2307/1971281},
}

\bib{Sjö}{article}{
   author={Sj{\"o}gren, Peter},
   title={Convergence for the square root of the Poisson kernel},
   journal={Pacific J. Math.},
   volume={131},
   date={1988},
   number={2},
   pages={361--391},
   issn={0030-8730},
   %review={\MR{922224 (89d:43012)}},
}

\bib{Tem}{book}{
   author={Tempelman, Arkady},
   title={Ergodic theorems for group actions},
   series={Mathematics and its Applications},
   volume={78},
   note={Informational and thermodynamical aspects;
   Translated and revised from the 1986 Russian original},
   publisher={Kluwer Academic Publishers Group, Dordrecht},
   date={1992},
   pages={xviii+399},
   isbn={0-7923-1717-3},
   %review={\MR{1172319}},
   %doi={10.1007/978-94-017-1460-0},
}
		
\end{biblist}
\end{bibdiv}
\end{document}